\documentclass[reqno]{amsart}

\usepackage{amsmath}
\usepackage{amsthm}
\usepackage{amsfonts}
\usepackage{amssymb}
\usepackage{enumitem}
\usepackage{hyperref}
\usepackage{bbm}
\usepackage[nocompress]{cite}

\newcommand{\oindicator}[1]{\ensuremath{\mathbf{1}_{{#1}}}}

\DeclareMathOperator{\tr}{tr}

\DeclareMathOperator{\ii}{i}

\newcommand{\E}{\mathbb{E}}
\newcommand{\C}{\mathbb{C}}
\renewcommand{\P}{\mathbb{P}}
\newcommand{\N}{\mathbb{N}}
\renewcommand\Re{\operatorname{Re}}
\renewcommand\Im{\operatorname{Im}}
\newcommand{\eps}{\varepsilon}

\def\R{\mathbb{R}}

\newcommand{\dd}{\mathrm{d}}

\theoremstyle{plain}
\newtheorem{theorem}{Theorem}[section]

\newtheorem{lemma}[theorem]{Lemma}
\newtheorem{corollary}[theorem]{Corollary}

\newtheorem{proposition}[theorem]{Proposition}

\theoremstyle{definition}
\newtheorem{definition}[theorem]{Definition}

\theoremstyle{remark}
\newtheorem{remark}[theorem]{Remark}

\makeatletter \let\@wraptoccontribs\wraptoccontribs \makeatother

\begin{document}
	
	\title[Extreme eigenvalues of Laplacian random matrices]{Universality for the extreme eigenvalues of Laplacian random matrices}

	\author[A. Campbell]{Andrew Campbell}
	\address{Department of Mathematical Sciences\\ Lafayette College\\ 730 High Street\\ Easton, PA 18042}
	\email{campband@lafayette.edu}
	
	\author[K. Luh]{Kyle Luh}
	\address{Department of Mathematics\\ University of Colorado\\ Campus Box 395\\ Boulder, CO 80309-0395\\USA}
	\email{kyle.luh@colorado.edu}
	\thanks{K. Luh was supported in part by the Simons Foundation Grant MP-TSM-00001988.}
	
	\author[S. O'Rourke]{Sean O'Rourke}
	\address{Department of Mathematics\\ University of Colorado\\ Campus Box 395\\ Boulder, CO 80309-0395\\USA}
	\email{sean.d.orourke@colorado.edu}
	\thanks{S. O'Rourke has been supported in part by NSF CAREER grant DMS-2143142. }

	\begin{abstract}
	We study the eigenvalues of the random Laplacian matrix $D - A$, where $A$ is a Wigner matrix with sub-Gaussian entries and the diagonal matrix $D$ contains the row sums of $A$. 
	Our main results show that the extreme eigenvalues of this model exhibit Poisson statistics; in particular, after the appropriate centering and scaling, the largest eigenvalue converges to the Gumbel distribution as the dimension of the matrix tends to infinity. 
	This confirms, for general sub-Gaussian entries, a phenomenon the authors previously established only in the Gaussian case [Electron.\ J. Probab. 30 (2025), Paper No. 104], resolving a conjecture raised there.
	As a corollary, for an Erd\H{o}s--R\'enyi random graph, we show the asymptotic fluctuations of the algebraic connectivity (Fiedler value) can be described in terms of the Gumbel distribution. 	
	\end{abstract}

	\maketitle

	\section{Introduction}
	
	For symmetric matrices with independent (or weakly dependent) entries, random matrix theory provides strong universality results for the eigenvalues. 
	This includes the global semicircle law down to the optimal local scale \cite{MR1727234,MR2669449,MR2784665,MR3916110,MR2730790,MR2981427,MR3068390,MR3372074,MR3478311,MR3541852,MR3699468,MR2567175,MR2906465,MR4550215}. 
	For instance, the largest eigenvalues asymptotically converge to the appropriate Airy point process, and the largest eigenvalue is known to converge to the Tracy-Widom law as long as the entries have light enough tails \cite{MR3800840,MR4019881,MR4091114,MR2485010,MR2545551,MR2398763,MR3582818,MR2988403,MR4260217,MR2308592,MR4474568,MR3161313,MR1257246,MR1385083}.  
	The situation is different for random matrices with heavy tails.  
	In this case, the extreme eigenvalues instead converge to a Poisson point process dictated by the largest entries of the matrix \cite{MR2081462,MR2125574,MR2234922,MR2373441,MR2375601,MR2548495}.  
	Much work has been done in the areas discussed above, and the references provided are not a complete listing. 
	
	For the largest eigenvalues, we will show that Laplacian matrices exhibit a behavior closer to matrices with heavy-tails, even when the entries are sub-Gaussian. 
	
	\begin{definition} \label{def:lap}
		For an $n\times n$ real symmetric matrix $A=(a_{ij})$, we define the \emph{Laplacian matrix} $L_A$ of $A$ as \begin{equation}
			{L}_{A}=D_{A}-A,
		\end{equation}  where $D_{A}=(D_{ij})$ is the diagonal matrix containing the row sums of $A$:\begin{equation}
		D_{ii}=\sum_{j=1}^{n}a_{ij}.
		\end{equation}
	\end{definition} 
	
	Random Laplacian matrices arise in a number of applications including those involving complex graphs \cite{MR2248695},  community detection in the stochastic block model \cite{MR3349181}, the $\mathbb{Z}_2$ synchronization problem \cite{MR3777782}, and other optimization problems in semidefinite programming \cite{MR3777782}.  Random Laplacian matrices have also been used in the theoretical physics literature to study random impedance networks \cite{MR1747178, PhysRevE.67.047101}.  
	
	We will focus on the case when $A$ is a Wigner matrix with arbitrary diagonal entries.
	\begin{definition} \label{def:Wigner matrix}
		Let $\xi$ be a real random variable with mean zero and unit variance.  
		An $n \times n$ symmetric random matrix $A = (a_{ij})$ is a \emph{Wigner matrix} if:
		\begin{enumerate}
		\item (off-diagonal entries) $a_{ij}$, $1 \leq i < j \leq n$ is a collection of iid copies of $\xi / \sqrt{n}$.
		\item (flexibility of the diagonal) $a_{ii}$, $1 \leq i \leq n$ is a collection of arbitrary random variables which may be dependent on one another, may depend on  the off-diagonal entries, and may depend on $n$.
		\end{enumerate}
		We call $\xi$ the \emph{atom variable} of $A$. 
	\end{definition}
	\begin{remark}
	Classically, Wigner matrices require the diagonal entries to be independent of one another and the off-diagonal entries.  Since the diagonal entries of $A$ do not contribute to the Laplacian $L_A$, we need not make any assumptions on the diagonal entries. In fact, we will assume the diagonal entries are zero for the entire proof. 
	\end{remark}
	
	Even when the entries of $A$ are light-tailed, the diagonal entries of $D_A$ dominate over the entries of $A$, as each nonzero entry of $D_A$ is a sum of $n-1$ independent random variables.  
		From the central limit theorem one expects the diagonal entries of $D_A$ to be Gaussian. 
	In fact, under some appropriate moment assumptions on the atom variable, when $A$ is a Wigner matrix the limiting empirical spectral distribution for $L_A$ is given by the free convolution of the semicircle law and the Gaussian distribution \cite{MR2759729,MR2206341,MR4440252,MR1747178, PhysRevE.67.047101}. 
	These results have been extended to generalized Wigner matrices \cite{MR4440252}, inhomogeneous and dilute Erd\H{os}--R\'{e}nyi random graphs \cite{MR4193182,MR2967963}, and block Laplacian matrices \cite{MR3321497}.

	We note that since the free convolution of the semicircle law and the Gaussian distribution is supported on the entire real line, there is not as natural a choice of ``edge'' as there is in the case of compactly supported limiting empirical spectral distributions. 
	 For a large class of Wigner matrices $A$, the location of the largest eigenvalue of $L_A$ has been studied by Ding and Jiang \cite{MR2759729}.  
	 Their results show that the largest eigenvalue of $L_A$ is asymptotically close to $\sqrt{2 \log n}$ as the dimension $n$ tends to infinity.  
	 The generalized Wigner case was considered in \cite{MR4440252}, while the smallest eigenvalues were studied in \cite{MR2948689}.  Spectral norm bounds are established in \cite{MR2206341,MR4440252}. 
	 
	The goal of this paper is to study the largest eigenvalues of $L_A$ when $A$ is a Wigner matrix with sub-Gaussian atom variable.   
	While we focus on the edge of the spectrum, the eigenvalues in the bulk (and corresponding eigenvectors) were studied by Huang and Landon \cite{MR4058984} for the case when $A$ is a Wigner matrix or the adjacency matrix of a sparse Erd\H{os}--R\'{e}nyi random graph.  
		
		By classical extreme value theory, it is natural to expect the largest eigenvalues to be determined by the largest diagonal entries of $L_A$. 
		The authors established this phenomenon in \cite{campbell2022extreme} in the case when $A$ is drawn from the Gaussian Orthogonal Ensemble (GOE).  
		Indeed, it was shown that the largest eigenvalues converge to a Poisson point process, and the largest eigenvalue's fluctuations are asymptotically Gumbel.  
		The present paper confirms that this phenomenon is universal, resolving a conjecture from \cite{campbell2022extreme}.
		While the arguments in \cite{campbell2022extreme} relied on the rotational invariance of the GOE, we give a universal proof below that applies to a wide class of Wigner matrices $A$ with sub-Gaussian entries.
		Recall that a random variable $\xi$ is \emph{sub-Gaussian} if there exists $K>0$ such that 
		\begin{equation}
			\P(|\xi|\geq t)\leq 2\exp(-t^2/K^2),
		\end{equation} 
		for all $t\geq 0$, or equivalently there exists some $K'>0$ such that 
		\begin{equation}\label{eq:MGF def of subgaussian}
			\E\exp(\xi^2/K'^2)\leq 2.
		\end{equation} The sub-Gaussian norm of $\xi$ is defined by 
		\begin{equation}\label{eq:subgaussian norm}
			\|\xi\|_{\psi_2}=\inf\{K> 0:\E\exp(\xi^2/K^2)\leq 2\}.
		\end{equation}

	\subsection{Main results} 
	Define the sequences 
	\begin{equation}\label{eq:eigen scale and shift}
		a_n=\sqrt{2\log n},\quad b_n=\sqrt{2\log n}-\frac{\log\log n+\log(4\pi)-2}{2\sqrt{2\log n}},
	\end{equation} and \begin{equation}\label{eq:gauss shift}
	b_n'=\sqrt{2\log n}-\frac{\log\log n+\log(4\pi)}{2\sqrt{2\log n}}.
	\end{equation}  If $Z_1,Z_2,\dots$ are independent standard Gaussian random variables, then $a_n$ and $b_n'$ are classically known to be such that for any $x\in\R$ \begin{equation}
	\lim\limits_{n\rightarrow\infty}\P\left(a_n \left[\max_{1\leq j \leq n}Z_{j}-b_n' \right] \leq x \right)=\exp\left(-\exp(-x)\right)=:F(x),
	\end{equation} 
	where $F$ is the cumulative distribution function of the standard \emph{Gumbel distribution} \cite{MR691492,MR900810}. 
	
	For the eigenvalues of the Laplacian matrix, it turns out that $b_n$, rather than $b_n'$, is the correct centering term.  
	Our first main result describes the fluctuations of the largest eigenvalue.  
	
	\begin{theorem}[Largest eigenvalue] \label{thm:largest eigenvalue}
		Let $\{A_{n}\}_{n\geq1}$ be a sequence of Wigner matrices indexed by their dimension with common sub-Gaussian atom variable $\xi$. If $\lambda_{1}(L_{A_n})$ is the largest eigenvalue of $L_{A_n}$, then $a_n(\lambda_{1}(L_{A_n})-b_n)$ converges in distribution to the Gumbel distribution as $n\rightarrow\infty$. 
	\end{theorem}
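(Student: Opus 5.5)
We will show that the largest eigenvalue of $L_A$ is, to precision $o(1/\sqrt{\log n})$, a deterministic function of the largest diagonal entry $D_{ii}$. The perturbative heuristic is as follows. Let $i$ be the index maximizing $D_{ii}$, so $D_{ii}\approx\sqrt{2\log n}$. Second-order perturbation theory (Schur complement) gives
\[
\lambda_1\approx D_{ii}+\sum_{j\neq i}\frac{a_{ij}^2}{D_{ii}-\lambda_j(\text{rest})}\approx D_{ii}+\frac{1}{\sqrt{2\log n}}.
\]
This holds because $\sum_j a_{ij}^2\approx 1$ and the rest of the spectrum is typically at distance $\sim\sqrt{2\log n}$ from the top. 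The shift $1/\sqrt{2\log n}=2/(2\sqrt{2\log n})$ is exactly $b_n-b_n'$. So the plan is to prove
\[
\lambda_1(L_A)=\max_i D_{ii}+\frac{1}{\sqrt{2\log n}}+o\!\left(\frac{1}{\sqrt{\log n}}\right)
\]
in probability, together with Gumbel fluctuations for $\max_i D_{ii}$ with centering $b_n'$ and scaling $a_n$.

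\textbf{Step 1 (diagonal maxima).} The $D_{ii}=\sum_j a_{ij}$ are weakly dependent: each pair shares the single entry $a_{ij}$. We show that $a_n(\max_i D_{ii}-b_n')$ converges to the Gumbel distribution. We use moderate deviation (Cramér) asymptotics for sums of sub-Gaussian variables at level $\sqrt{2\log n}$. These give $\P(D_{ii}>b_n'+x/a_n)=(1+o(1))e^{-x}/n$, since the cumulant corrections are $O((\log n)^{3/2}/\sqrt n)$. For the dependence, we compute factorial moments of $\#\{i:D_{ii}>b_n'+x/a_n\}$. The pairwise correlation is only $1/n$, so the joint tails factor asymptotically, and Poisson convergence follows by the method of moments or Stein/Chen. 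We also record that with high probability the top two values of $D$ are separated by at least $\eps/\sqrt{\log n}$, and that there are $O(\log n)$ indices with $D_{kk}\ge \sqrt{2\log n}-C\log\log n/\sqrt{\log n}$.

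\textbf{Step 2 (upper and lower bounds for $\lambda_1$).} Set $\mu=\lambda_1(L_A)$ and let $i$ be the argmax. For $z$ near $D_{ii}+1/\sqrt{2\log n}$, write $L=D-A$ and use the Schur complement on coordinate $i$. The point $z$ is an eigenvalue above the spectrum of the minor $L^{(i)}$ iff
\[
z=D_{ii}+\langle a_i,(z-L^{(i)})^{-1}a_i\rangle,
\]
where $a_i$ is row $i$ without the diagonal entry. We must show three things:
\begin{itemize}[label={}]
\item (a) $\lambda_1(L^{(i)})\le z-c/\sqrt{\log n}$, which follows from a crude bound $\lambda_1(L^{(i)})\le \max_{k\ne i}D_{kk}+O(\|A\|\cdot\text{small})$ and the gap in Step 1;
\item (b) the quadratic form $\langle a_i,(z-L^{(i)})^{-1}a_i\rangle$ equals $(1+o(1))/\sqrt{2\log n}$;
\item (c) monotonicity of the right-hand side in $z$, which pins the solution.
\end{itemize}
For (b), $a_i$ is independent of the off-$i$ entries of $A$, but the $D_{kk}$ for $k\neq i$ contain $a_{ik}$ and $D_{ii}$ is conditioned to be large. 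We handle this by replacing $L^{(i)}$ with its version where $a_{ik}$ is removed from $D_{kk}$, a rank-diagonal perturbation of size $O(n^{-1/2})$. We then condition on $D_{ii}=s$. Given the sum, $a_i$ is approximately $s\mathbf 1/n$ plus a nearly isotropic vector, and the $\mathbf 1$ direction contributes only $O(s^2/n\cdot\|\cdot\|)$, which is negligible.

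Hanson--Wright then gives the concentration
\[
\langle a_i,Ga_i\rangle\approx \tfrac1n\tr G,\qquad G=(z-L^{(i)})^{-1}.
\]
It remains to show $\frac1n\tr G=(1+o(1))/z$. Most eigenvalues of $L^{(i)}$ are $O(1)$, since the limiting law is semicircle $\boxplus$ Gaussian. Only $O(n\cdot e^{-(\log n)(1-\delta)})$ eigenvalues lie near $z$, and each of those contributes at most $\sqrt{\log n}/n$ thanks to the gap in (a).

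The main obstacle is (a) together with the near-edge part of (b): we need the rest of the spectrum to be separated from $D_{ii}$ at the scale $1/\sqrt{\log n}$, uniformly, without rotational invariance. For the upper bound on $\lambda_1(L^{(i)})$ we would first try a Gershgorin/resolvent-type comparison with the diagonal. We write $L^{(i)}=D^{(i)}-A^{(i)}$ and observe that any eigenvalue above $\max_{k\neq i} D_{kk}+\eta$ satisfies the analogous Schur equation for some index $k$. A union bound over the $O(\log n)$ near-top indices, using the same quadratic-form estimate, then gives eigenvalue locations $D_{kk}+O(1/\sqrt{\log n})$ with the same deterministic shift, preserving the order of the diagonal maxima. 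Combining Steps 1 and 2 via Slutsky yields that $a_n(\lambda_1-b_n)$ converges to the Gumbel distribution.
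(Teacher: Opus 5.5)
Your architecture matches the paper's. Step 1 (Gumbel fluctuations of $\max_i D_{ii}$ around $b_n'$ via Petrov's moderate deviations and factorial moments) is exactly Proposition~\ref{prop:convergence of diagonal process}. The shift $1/D_{(1)}\approx 1/\sqrt{2\log n}=b_n-b_n'$ coming from $\frac1n\tr(z-L)^{-1}\approx 1/z$ is the paper's $E_k=\tilde D_{kk}+\tilde D_{kk}^{-1}(1+o(1))$, obtained from $m_n(z)\approx -1/z$. If (a) were available, your (b)--(c) would be sound, and simpler than the paper's estimates: on the real axis you only need resolvent bounds of size $\sqrt{\log n}$ instead of $\eta^{-1}=n^{1/4}$.

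\textbf{The gap is (a) itself.} The only crude bound is Weyl's, $\lambda_1(L^{(i)})\le\max_{k\neq i}D_{kk}+\|A\|$. Here $\|A\|\approx 2$ is of order one, while $D_{(1)}-D_{(2)}$ is of order $1/\sqrt{\log n}$. (In fact $a_n(D_{(1)}-D_{(2)})$ is asymptotically exponential, so your separation holds with probability $1-O(\eps)$, not with high probability.) What you actually need is $\lambda_1(L^{(i)})\le D_{(2)}+\frac{1+o(1)}{\sqrt{2\log n}}$, which is the upper half of the theorem applied to the minor.

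Your proposed way out is circular. The identity $\mu=D_{kk}+\langle a_k,(\mu-L^{(i,k)})^{-1}a_k\rangle$ holds for \emph{every} $k$ with $\mu\notin\mathrm{spec}(L^{(i,k)})$. So it gives information only if you already know that $\mu$ exceeds $\lambda_1(L^{(i,k)})$ by a margin, which is the same statement one level down. The descent can stop only once every index with $D_{kk}\ge t$ has been removed, for some $t$ with $t+\|A\|$ below $\sqrt{2\log n}$ by a margin. That fails at your threshold:
\begin{itemize}
\item After removing the polylogarithmically many indices above $\sqrt{2\log n}-C\log\log n/\sqrt{\log n}$, Weyl still gives only $\sqrt{2\log n}+\|A\|-o(1)$ for the rest.
\item Any admissible $t$ leaves at least $e^{c\sqrt{\log n}}$ indices, and $n^{\eps}$ of them for $t=\sqrt{(2-\delta)\log n}$.
\item Those diagonal values are spaced far more finely than $1/\sqrt{\log n}$. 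A one-index-at-a-time secular equation, each step needing a gap, therefore cannot be run.
\end{itemize}

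\textbf{The missing idea is to treat all near-top indices simultaneously.} Inside your real-axis framework this can be done with a block Schur complement over $T=\{k:D_{kk}\ge\sqrt{(2-\delta)\log n}\}$, where $|T|\le n^{\eps}$ by Proposition~\ref{prop:eigen count}.
\begin{itemize}
\item The crude bound now works: $\lambda_1(L^{(T)})\le\sqrt{(2-\delta)\log n}+3$.
\item The eigenvalues of $L$ above this level are the solutions of $z\in\mathrm{spec}(M(z))$, where $M(z)=D_T-A_{TT}+A_{TT^c}(z-L^{(T)})^{-1}A_{T^cT}$ is decreasing in $z$.
\item Apply Hanson--Wright against $\tilde L^{(T)}$, the minor with $\sum_{k\in T}a_{jk}$ removed from $D_{jj}$; it is independent of the rows indexed by $T$. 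A union bound over all subsets of size at most $n^{\eps}$ is affordable, because the tails are $\exp(-n^{c})$ once $\|(z-\tilde L^{(T)})^{-1}\|\lesssim(\log n)^{-1/2}$.
\item This gives $M(z)=D_T+\frac1n\tr(z-\tilde L^{(T)})^{-1}+O(n^{-1/2+C\eps})$ in operator norm. Weyl's inequality then yields $\lambda_1(L)=D_{(1)}+\lambda_1(L)^{-1}(1+o(1))$ with no gap condition at all.
\end{itemize}

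The paper takes a different route. It works at $\Im z=n^{-1/4}$ and compares $I_1=n\eta(\Im m_n(z_1)-\Im m_n(z_2)/\sqrt2)$ with its diagonal proxy $I_2$. This detects eigenvalues near every $E_k$ at once, and its only a priori input about the top of the spectrum is the counting Proposition~\ref{prop:eigen count}. It never needs to locate the top of a minor's spectrum.

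Separately, conditioning on $D_{ii}=s$ for the argmax $i$ is not rigorous as written. It also conditions the other rows, and $z$ depends on $a_i$. The paper avoids this by proving each estimate for every fixed $k$ with overwhelming probability on a net in $z$, then taking a union bound.
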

	
	More generally, we also consider the point process formed by the largest eigenvalues of the Laplacian.  
	
	\begin{theorem}[Poisson point process limit] \label{thm:eigenvalue point process}
		Let $\{A_{n}\}_{n\geq1}$ be a sequence of Wigner matrices indexed by their dimension with common sub-Gaussian atom variable $\xi$. 
		Define the point process 
		\begin{equation}
			\mathcal{L}_{n}=\sum_{j=1}^{n}\delta_{a_n ( \lambda_{j}(L_{A_n}) - b_n) },
		\end{equation} 
		where $\delta_x$ is a Dirac point mass at $x \in \mathbb{R}$. 
		Then $\mathcal{L}_n$ converges in distribution in the vague topology to a Poisson point process with intensity measure $\dd \mu=e^{-x}\dd x$ as $n \to \infty$.
	\end{theorem}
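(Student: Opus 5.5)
The plan is to reduce the eigenvalues of $L_A$ near the top of the spectrum to a deterministic-looking perturbation of the diagonal entries $D_{ii}$, and then to apply Poisson convergence for the resulting approximately independent, approximately Gaussian variables. The heuristic comes from second-order perturbation theory. If $D_{ii}$ is one of the largest diagonal entries, then
\[
\lambda \approx D_{ii} + \sum_{j\neq i}\frac{a_{ij}^2}{D_{ii}-D_{jj}} .
\]
Since $D_{ii}\approx\sqrt{2\log n}$ dominates the bulk of the $D_{jj}$, the sum is approximately $\frac{1}{n}\sum_j (D_{ii}-D_{jj})^{-1}$. This expression is of order $1/\sqrt{2\log n}$. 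The shift $1/a_n$ is precisely the difference $b_n-b_n'=\frac{2}{2\sqrt{2\log n}}$, which explains why the centering is $b_n$ rather than $b_n'$.

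The steps, in order, are as follows.

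\textbf{Step 1 (Gaussian extremes of the diagonal).} Let $i_1,\dots,i_k$ be the indices of the top $k$ entries of $D$. I will show that the point process $\sum_i\delta_{a_n(D_{ii}-b_n')}$ converges to the Poisson process with intensity $e^{-x}\,\dd x$. The $D_{ii}$ are sums of $n-1$ sub-Gaussian variables and are weakly dependent, each pair sharing only the single entry $a_{ij}$. A moderate-deviation (Cram\'er-type) estimate gives $\P(D_{ii}>b_n'+x/a_n)\sim e^{-x}/n$, since the deviations are only of order $\sqrt{\log n}$ and the correction to the Gaussian tail is $O((\log n)^{3/2}/\sqrt n)$. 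The two-point correlations can be handled by conditioning on $a_{ij}$, and I will conclude with the method of factorial moments (or Kallenberg's theorem), which also needs a bound on the probability that two $D_{ii}$ are simultaneously large.

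\textbf{Step 2 (Schur complement / resolvent reduction).} For $z$ in a window $[b_n-C/a_n,\infty)$, $z$ is an eigenvalue of $L_A$ iff $0=z-D_{ii}+a_{i\cdot}^T (z-L^{(i)})^{-1}a_{i\cdot}$ for the relevant $i$. Here $L^{(i)}$ is the minor, modified so that the shared entries are handled. Because $z$ exceeds all but $O(1)$ of the $D_{jj}$ by a lot, I expect a local law to hold for $L^{(i)}$ at such spectral parameters, showing that the quadratic form concentrates around $\frac1n\tr(z-L^{(i)})^{-1}$. I then need to show that this normalized trace equals $\frac{1}{z}(1+o(1))$, which will follow because $\frac1n\tr(z-L)^{-1}\approx\int(z-x)^{-1}d(\mu_{sc}\boxplus\gamma)(x)\approx 1/z$ for $z\sim\sqrt{2\log n}$. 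The required precision is $o(1/\sqrt{\log n})$. The obstacle here is the dependence of $a_{i\cdot}$ on $D_{ii}$ and on the other diagonal entries of the minor. I would deal with it by writing $D_{jj}=D_{jj}^{(i)}+a_{ij}$ and absorbing the rank-one and diagonal corrections, which are of size $O(n^{-1/2})$ each and $O(\sqrt{\log n/n})$ in aggregate.

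\textbf{Step 3 (near-diagonal interlacing and counting).} Next I show that with high probability each of the top $k$ eigenvalues corresponds to a distinct large $D_{ii}$, satisfying $\lambda=D_{ii}+1/D_{ii}+o(1/\sqrt{\log n})$, and that no other eigenvalues lie above $b_n-C/a_n$. I will use Weyl-type bounds together with an argument in which the eigenvector is localized at site $i$. Gaps between the top $D_{ii}$ are of order $1/a_n$, while the errors are $o(1/a_n)$. Combining with Step 1, and using $1/D_{ii}=1/a_n+o(1/a_n)$, the rescaled eigenvalues equal the rescaled $D_{ii}$ shifted by $-1$ plus $o(1)$, which yields the Poisson limit with centering $b_n$. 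Theorem~\ref{thm:largest eigenvalue} follows as the case of the largest point.

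The main obstacle is Step 2: obtaining a resolvent estimate with $o(1/\sqrt{\log n})$ accuracy at spectral parameters just above the extreme diagonal entries. This must hold uniformly over $z$ in the window and must account for the other near-extreme $D_{jj}$, which create small denominators. I would first try excluding a small exceptional set of indices $j$ with $D_{jj}$ close to $z$. Such indices number $O(1)$, and their contributions $a_{ij}^2/(z-D_{jj})$ are $O(1/(n\cdot a_n^{-1}))$, which is negligible. The remaining sum I would control via concentration, conditional on the diagonal of the minor.
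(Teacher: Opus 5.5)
Your Step 1 is essentially the paper's Section~\ref{sec:largest-entries}: Petrov-type moderate deviations, conditioning on the shared entries, and factorial moments. Your decoupling $D_{jj}=D^{(i)}_{jj}+a_{ij}$ is the paper's $\tilde L^{(k)}$. The gap is in how you control the resolvent and count eigenvalues.

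At a \emph{real} spectral parameter $z$ in the window, $z\mapsto a_{i\cdot}^{\mathrm T}(z-L^{(i)})^{-1}a_{i\cdot}$ has poles at the eigenvalues of the minor. Also $\|(z-L^{(i)})^{-1}\|=1/\operatorname{dist}(z,\sigma(L^{(i)}))$ has no a priori bound. So Hanson--Wright gives nothing until you know where the top eigenvalues of $L^{(i)}$ are. The same information is needed for $\frac1n\tr(z-L^{(i)})^{-1}=(1+o(1))/z$, since the global law says nothing about the $O(n^{\eps})$ eigenvalues near $z$.

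Your proposed remedy treats the near-edge part as $\sum_j a_{ij}^2/(z-D_{jj})$. That presupposes the top eigenvalues of $L^{(i)}$ sit next to its large diagonal entries, with eigenvectors localized at single sites. This is the statement being proved, applied to the minor, so as it stands the argument is circular. Conditioning on the diagonal of the minor does not help, because the off-diagonal part $-A^{(i)}$, of norm about $2$, is still there.

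Step~3 has the same hole. Weyl only gives $|\lambda_j(L)-\lambda_j(D)|\le\|A\|\approx 2$, far coarser than the $1/a_n$ spacing. The one-to-one matching is asserted rather than argued: one eigenvalue near each top $D_{ii}+1/D_{ii}$, never two, none elsewhere above $b_n-C/a_n$. A root of the secular equation is the easy half; the counting is the real content. Also, the Schur equation should read $z=D_{ii}+a_{i\cdot}^{\mathrm T}(z-L^{(i)})^{-1}a_{i\cdot}$; with your sign the shift would be $-1/D_{ii}$.

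The paper avoids both problems by staying off the real axis, at $z=E+\ii\eta$ with $\eta=n^{-1/4}$. There $\|G\|\le\eta^{-1}$ for free. The crude a priori count of Proposition~\ref{prop:eigen count} also suffices: $O(n^{\eps})$ diagonal entries, hence by Weyl/interlacing $O(n^{\eps})$ eigenvalues of $\tilde L^{(k)}$, lie above $\sqrt{(2-2\delta)\log n}$. This bounds $\|\tilde G^{(k)}\|_{HS}$ and gives $|r_k|=O(n^{-1/2}\log^{2\xi}n)$ and $|z||m_n(z)+1/z|=o(1)$, with no knowledge of the minors' top eigenvalues.

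Eigenvalues are then located \emph{and counted} by comparing two quantities. The first is $I_1=n\eta(\Im m_n(z_1)-\Im m_n(z_2)/\sqrt2)$, which is about $N/2$ when $N$ eigenvalues lie within $o(\eta)$ of $E$, and $o(1)$ when none lie within $a_n^{-3/2}$. The second is its diagonal analogue $\sum_k\eta^4/B''_k$, which has a peak of height about $1/2$ at each $E_k\approx\tilde D_{kk}+1/\tilde D_{kk}$. Matching the two forces exactly one eigenvalue within $o(\eta)\ll 1/a_n$ of each $E_{(j)}$ and none elsewhere.

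If you prefer to stay on the real axis, a non-circular alternative is a block Schur complement onto the $O(n^{\eps})$ indices $T$ with $D_{jj}\ge\sqrt{(2-\delta)\log n}$. Weyl gives $\lambda_1(L_{T^c})\le\sqrt{(2-\delta)\log n}+3$, so $(t-L_{T^c})^{-1}$ is $O(1/\sqrt{\log n})$ throughout the window. Sylvester's law of inertia then equates the number of eigenvalues of $L$ above $t$ with the number of positive eigenvalues of the $|T|\times|T|$ Schur complement. That matrix is nearly diagonal, with diagonal entries $\approx D_{jj}-t+1/t$. You would still have to handle the dependence of the random set $T$ on the rows $a_{j\cdot}$, $j\in T$.
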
  

	As with all of our main results, Theorems \ref{thm:largest eigenvalue} and \ref{thm:eigenvalue point process} can be adapted to the smallest eigenvalues as well.  Indeed, by considering the Wigner matrix $-A$ with atom variable $-\xi$, one can establish analogous results for the smallest eigenvalues.

	Both Theorems \ref{thm:largest eigenvalue} and \ref{thm:eigenvalue point process} follow from our next main result. 
		
	\begin{theorem}[$k$ largest eigenvalues] \label{thm:largest k eigenvalues}
		Let $\{A_{n}\}_{n\geq1}$ be a sequence of Wigner matrices indexed by their dimension with common sub-Gaussian atom variable $\xi$, and let $k$ be a fixed positive integer.  If $\{Z_{j}^{(n)}\}_{1\leq j\leq n}$ is a triangular array of independent standard Gaussian random variables with order statistics  $Z_{(1)}^{(n)}\geq Z_{(2)}^{(n)}\geq\cdots\geq Z_{(n)}^{(n)}$ in level $n$, then for any $x_1, \ldots, x_k \in \R$
        \begin{equation}\label{eq:Gumbel limit stats joint}
			\begin{aligned}
				F_{k}(x_1,\dots,x_k)&:=\lim\limits_{n\rightarrow\infty}\P\left(a_n\left(Z_{(1)}^{(n)}-b_n' \right)\leq x_1,\dots,a_n\left(Z_{(k)}^{(n)}-b_n' \right)\leq x_k \right)\\
				&\ =\lim\limits_{n\rightarrow\infty}\P\left(a_n\left(\lambda_{1}(L_{A_n})-b_n \right)\leq x_1,\dots,a_n\left(\lambda_{k}(L_{A_n})-b_n \right)\leq x_k \right),
			\end{aligned}
		\end{equation}
		where $\lambda_1(L_{A_n}) \geq \lambda_2(L_{A_n}) \geq \cdots \geq \lambda_n(L_{A_n})$ are the ordered eigenvalues of $L_{A_n}$. 
	\end{theorem}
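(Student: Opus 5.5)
We will show that the top $k$ eigenvalues of $L_A$ are, to within $o(1/a_n)$, equal to $D_{(j)} + 1/D_{(j)}$, where $D_{(1)}\ge D_{(2)}\ge\cdots$ are the order statistics of the diagonal entries $D_{ii}=\sum_j a_{ij}$. The shift $b_n-b_n' = 1/\sqrt{2\log n} \approx 1/D_{(1)}$ is exactly this correction. The point is that a diagonal entry $d$ far from the bulk, perturbed by $-A$, produces an eigenvalue near $d + \sum_{j} a_{ij}^2/(d - \cdots) \approx d + 1/d$.

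\textbf{Step 1 (Gaussian comparison of row sums).} Set $D_i=\sum_{j\ne i} a_{ij}$. Dependence between rows is weak, since $D_i$ and $D_k$ share the single entry $a_{ik}$ of size $n^{-1/2}$. We want to prove that $(a_n(D_{(j)}-b_n'))_{j\le k}$ has the limit law $F_k$. To do this, write $D_i=\tilde D_i + \text{(shared part)}$ and use a moderate-deviation / Cramér-type estimate for sub-Gaussian sums at level $\sqrt{2\log n}$. Then compute factorial moments of the counts $\#\{i: D_i> b_n'+x/a_n\}$ and show they converge to Poisson moments. The Cramér correction term is $O(\E\xi^3 (\log n)^{3/2}/\sqrt n)=o(1)$, so the exceedance probabilities match the Gaussian ones up to a factor $1+o(1)$. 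For pairs $i\neq k$, the covariance $1/n$ contributes negligibly to joint tails. This gives the first equality in the theorem with $D$ in place of the eigenvalues.

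\textbf{Step 2 (isolating the top eigenvalues).} Fix a small $\delta$ and let $I$ be the set of indices with $D_i\ge \sqrt{2\log n}-\delta'$, where $\delta'$ is chosen so that $|I|\le n^{c}$ with $c$ small. For the complement, the matrix $L^{(I^c)}$ restricted to $I^c$ has norm at most $\max_{i\notin I} D_i + O(\|A\|)$. This is still too crude, and needs the finer bound $\lambda_1 \le \max_{i\notin I}D_i + 1/\max + o(1/a_n)$, or simply that it is below $\sqrt{2\log n}-\delta'/2$. For the latter, use a resolvent/local-law bound for $D-A$ showing that the spectrum of $L$ away from indices with large $D_i$ does not exceed $\max D_i + C/\sqrt{\log n}$. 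Then apply Schur complements. For $z$ near a candidate $D_i$ with $i\in I$, eigenvalues solve
\[
z = D_i - a_{ii} + \sum_{j,l\ne i} a_{ij}a_{il} G^{(i)}_{jl}(z),
\]
where $G^{(i)}$ is the resolvent of $L$ with row and column $i$ removed. Since $z$ lies at distance of order $\sqrt{\log n}$ from the bulk of the spectrum of $L^{(i)}$ (minus a few other outliers), concentration of quadratic forms in the independent vector $(a_{ij})_j$ gives
\[
\sum a_{ij}a_{il}G^{(i)}_{jl}(z) = \tfrac1n \tr G^{(i)}(z) + o(1/\sqrt{\log n}).
\]
The diagonal-dominance expansion then gives $\tfrac1n\tr G^{(i)}(z)\approx \E[1/(z-D)]\approx 1/z$ up to $O((\log n)^{-3/2})$. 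Hence $z = D_i + 1/D_i + o(1/a_n)$. A subtlety is that $D_j$ for $j\ne i$ depends on $a_{ij}$; we handle this by replacing $D_j$ by $D_j - a_{ij}$ in $L^{(i)}$, at cost $O(n^{-1/2})$.

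\textbf{Step 3 (matching and conclusion).} Near-ties among the $D_{(j)}$ cause mixing between outliers. However, the gaps between the top $k$ values are of order $1/a_n$, while the off-diagonal couplings $a_{ik}$ and the induced interaction $\sum_j a_{ij}a_{kj}G_{jj}$ are $O(n^{-1/2}\log n)$. A Weyl/interlacing argument on the $|I|\times|I|$ Schur complement $D_I - A_I - A_{I,I^c}G_{I^c}(z)A_{I^c,I}$ therefore shows that $\lambda_j(L) = D_{(j)}+1/D_{(j)}+o_P(1/a_n)$ for $j\le k$. Finally, $a_n(D_{(j)}+1/D_{(j)}-b_n) = a_n(D_{(j)}-b_n')+o_P(1)$, and Slutsky's lemma combined with Step 1 yields the claim. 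The main obstacle is Step 2: the uniform upper bound ruling out spurious eigenvalues generated by the bulk, and the accuracy $o((\log n)^{-1/2})$ of the quadratic-form approximation uniformly over $i\in I$. We would attack this with a union bound over $|I|\le n^c$ indices, using Hanson--Wright for sub-Gaussian vectors and an a priori norm bound $\|A\|\le 3$.
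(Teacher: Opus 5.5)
Your Step 1 is essentially the paper's Proposition~\ref{prop:convergence of diagonal process}: split off the few shared entries, apply Petrov's moderate deviations, and compute factorial moments. The problem is in Step 2. Every estimate there requires $z$ to be separated from the spectrum of the matrix being inverted. This includes Hanson--Wright for $\mathbf{a}_i^{\mathrm T}G\,\mathbf{a}_i$ at a \emph{real} spectral parameter, the expansion $\frac1n\tr G\approx 1/z$, the decoupling $D_j\mapsto D_j-a_{ij}$ at cost $O(n^{-1/2})$, and the $|I|\times|I|$ Schur complement in Step 3.

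With single-index removal this separation fails. $L^{(i)}$ still carries the other outliers, at distance $\asymp a_n^{-1}$ from $z$, so $\|G^{(i)}(z)\|$ is uncontrolled, and locating those outliers is exactly what you are trying to prove.

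In the block version you need $\lambda_{\max}(L_{I^c})\le\sqrt{2\log n}-\delta'/2$, and you get it from ``a resolvent/local-law bound'' giving $\lambda_{\max}\le\max D_i+C/\sqrt{\log n}$. That is the upper half of the theorem itself, for a matrix whose diagonal is moreover truncated. Proving such a statement is the content of the paper's entire Section~\ref{sec:proof}. The paper works at $z=E+\ii\eta$ with $\eta=n^{-1/4}$, so that no a priori gap is needed. It bounds the decoupled error $r_k$, and it uses the detector $I_1\approx\sum_k\eta^4/B_k''$ to exclude eigenvalues away from the $E_k$. With your constant-width window, the tools you list (Hanson--Wright, a union bound, $\|A\|\le 3$) give only $\lambda_{\max}(L_{I^c})\le\sqrt{2\log n}-\delta'+3$. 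So as written the crux is assumed, not proved.

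The obstruction comes from taking $\delta'$ constant, and it disappears with a different threshold.
\begin{itemize}
\item Take $I=\{i:D_i\ge\sqrt{(2-\delta)\log n}\}$. By Proposition~\ref{prop:eigen count}, $|I|\le n^{\eps}$ with overwhelming probability. Weyl alone then gives $\lambda_{\max}(L_{I^c})\le\sqrt{(2-\delta)\log n}+\|A\|$, which lies $\gtrsim\delta\sqrt{\log n}$ below the window.
\item For real $z$ in the window, set $R=(D_{I^c}-z)^{-1}$. Since $\|A\|\|R\|\lesssim(\delta\sqrt{\log n})^{-1}$, the Neumann series $(L_{I^c}-z)^{-1}=R\sum_{m\ge0}(A_{I^c}R)^m$ converges.
\item The $m\ge2$ terms are $O((\log n)^{-3/2})$ in operator norm, and the $m=1$ diagonal is $O(1/\log n)$.
\item The $m=0$ diagonal is $\sum_{j\in I^c}a_{ij}^2/(D_j-z)=-1/z+O(1/\log n)$. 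This follows from Cauchy--Schwarz with $\sum_j a_{ij}^4=O(1/n)$ and $\sum_jD_j^2\le\|A\|^2n$.
\item The off-diagonal entries ($a_{ik}$ and the $m=0,1$ bilinear forms) are $O(n^{-1/2+\eps})$ after decoupling.
\item Hence $S(z)=D_I-A_{II}-z-A_{I,I^c}(L_{I^c}-z)^{-1}A_{I^c,I}$ equals $\mathrm{diag}(D_i-z+1/z)$ up to operator norm $\epsilon_n=o(a_n^{-1})$.
\item Since $L_{I^c}-z\prec0$, Haynsworth inertia additivity gives $\#\{j:\lambda_j(L)>z\}=\#\{\text{positive eigenvalues of }S(z)\}$. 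Weyl sandwiches this between $\#\{i:D_i-z+1/z>\epsilon_n\}$ and $\#\{i:D_i-z+1/z>-\epsilon_n\}$. This has to be an inertia argument, not plain interlacing, because $S$ depends on $z$.
\end{itemize}

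Two points still need care. First, $I$ is data-dependent, so a union bound ``over $i\in I$'' is not legitimate. Prove the bilinear-form bounds for all pairs in $[n]^2$, writing the index set as $\mathbf 1\{D_j<T\}$ and decoupling $D_j\to D_j-a_{ji}-a_{jk}$. This costs only the few $j$ with $D_j$ within $n^{-1/2+\eps}$ of $T$. Second, the bounds must hold uniformly in $z$; a net suffices.

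Repaired this way, your route is genuinely different from the paper's and more elementary. It works at real $z$ and yields the counting function directly, so near-ties among the $D_{(j)}$ are harmless. It also avoids both the $I_1/I_2$ comparison and the imported estimate \eqref{eq:remdiff}. The paper's complex-$z$ analysis, in exchange, needs no a priori spectral gap and produces an edge local law for $m_n$ along the way.
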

	
	Theorem \ref{thm:largest k eigenvalues} implies Theorem \ref{thm:eigenvalue point process}, see \cite[Proof of Corollary 1.6]{campbell2022extreme} for the proof. 
	Theorem \ref{thm:largest eigenvalue} follows from Theorem \ref{thm:largest k eigenvalues} by taking $k = 1$ and applying classical results (see, for example, \cite[Theorem 1.5.3]{MR691492}).   
	
	We conjecture that our main results should hold even when the atom variable is not sub-Gaussian, provided it has finite fourth moment. 
    We have not tried to optimize the assumptions on the atom variable, and there are several points in the proof that do use the sub-Gaussian assumption. 
		
	All of our above results can be extended to Wigner matrices whose atom variable has  non-zero mean. The key observation is the following.  Without loss of generality, we can assume the Wigner matrix $A_n$ has all zero diagonal entries, and if $\overline{A}=\E A_n$, then
    \begin{equation*}
		\overline{A}=\E a_{12}\mathbf{e}\mathbf{e}^{\mathrm{T}}-\E a_{12}I_n,
	\end{equation*} where the all-ones vector $\mathbf{e}$ is  an eigenvector of any Laplacian matrix. Thus, the map between Laplacian matrices defined by $D-A\mapsto D-(n-1)\E a_{12}I_n-A+ \overline{A}$ results in a simple deterministic shift of the non-trivial eigenvalues. We illustrate this extension in detail in the next theorem, where we compute the fluctuations of the spectral gap (also known as the algebraic connectivity or Fiedler value) of an Erd\H{o}s--R\'enyi random graph. 
	
	\begin{theorem}\label{thm:Spectral gap for Laplacian}
		Let $p\in (0,1)$ be fixed and $L_n$ be the Laplacian of an Erd\H{o}s--R\'enyi random graph\footnote{In our definition of $G(n,p)$, self-loops are not permitted, but this difference is irrelevant for the Laplacian.} $G(n,p)$. If $\lambda_{n-1}(L_n)$   is the second smallest eigenvalue of $L_n$, i.e.\ the spectral gap, then for any $x \in \mathbb{R}$
        \begin{equation}
			\lim\limits_{n\rightarrow\infty}\P\left(a_n\left[\sqrt{\frac{np}{1-p}}-b_n-\frac{\lambda_{n-1}(L_n)}{\sqrt{np(1-p)}}  \right]\leq x \right)=\exp\left(-e^{-x}\right).
		\end{equation} 
	\end{theorem}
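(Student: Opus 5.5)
We reduce Theorem~\ref{thm:Spectral gap for Laplacian} to Theorem~\ref{thm:largest k eigenvalues} with $k=1$, applied to a centered and rescaled adjacency matrix. The argument is the deterministic shift described just before the statement, plus the observation that the all-ones vector $\mathbf{e}$ is an eigenvector of every Laplacian.

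\textbf{Step 1: write $L_n$ in terms of a Wigner matrix.} Let $M$ be the adjacency matrix of $G(n,p)$, with zero diagonal, so that $L_n = D_M - M$. Define
\[
A = -\frac{M - p(\mathbf{e}\mathbf{e}^{\mathrm T} - I_n)}{\sqrt{np(1-p)}} .
\]
This is a Wigner matrix with zero diagonal. Its atom variable is $\xi = -(\mathrm{Ber}(p)-p)/\sqrt{p(1-p)}$, which is bounded and hence sub-Gaussian, and has mean zero and unit variance.

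Set $J = \mathbf{e}\mathbf{e}^{\mathrm T} - I_n$. Its row sums are $n-1$, so $D_J = (n-1)I_n$ and
\[
L_J = (n-1)I_n - J = nI_n - \mathbf{e}\mathbf{e}^{\mathrm T}.
\]
Since $M = pJ - \sqrt{np(1-p)}\,A$ and the map $B \mapsto L_B$ is linear,
\[
L_n = pL_J - \sqrt{np(1-p)}\,L_A = p\bigl(nI_n - \mathbf{e}\mathbf{e}^{\mathrm T}\bigr) - \sqrt{np(1-p)}\,L_A .
\]

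\textbf{Step 2: relate the spectra.} Both $L_J$ and $L_A$ annihilate $\mathbf{e}$, so $\mathbf{e}$ is a common eigenvector with eigenvalue $0$. On $\mathbf{e}^{\perp}$ we have $L_J = nI$, so there
\[
L_n = np - \sqrt{np(1-p)}\,L_A .
\]
Let $\mu_1 \ge \cdots \ge \mu_{n-1}$ be the eigenvalues of $L_A$ restricted to $\mathbf{e}^{\perp}$. The spectrum of $L_A$ is $\{0,\mu_1,\dots,\mu_{n-1}\}$, and the spectrum of $L_n$ is $\{0\} \cup \{np - \sqrt{np(1-p)}\,\mu_j\}$.

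\textbf{Step 3: identify $\lambda_{n-1}(L_n)$ with $\lambda_1(L_A)$.} By Theorem~\ref{thm:largest k eigenvalues} with $k=1$, $\lambda_1(L_A) = \sqrt{2\log n}\,(1+o(1))$ in probability, which is positive with probability tending to one. On that event the zero eigenvalue does not affect the top, so $\lambda_1(L_A) = \mu_1$.

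The eigenvalues of $L_n$ on $\mathbf{e}^{\perp}$ are of order $np \to \infty$, again with probability tending to one. So $0$ is the smallest eigenvalue of $L_n$, and the second smallest is
\[
\lambda_{n-1}(L_n) = np - \sqrt{np(1-p)}\,\mu_1 .
\]
Rearranging, with high probability
\[
\lambda_1(L_A) = \sqrt{\frac{np}{1-p}} - \frac{\lambda_{n-1}(L_n)}{\sqrt{np(1-p)}} .
\]

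\textbf{Step 4: conclude.} Substituting this identity, the probability in the statement equals $\P\bigl(a_n(\lambda_1(L_A) - b_n) \le x\bigr) + o(1)$. By Theorem~\ref{thm:largest eigenvalue} this converges to $\exp(-e^{-x})$.

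There is no real obstacle; the points needing care are the sign and scaling bookkeeping, and the high-probability event on which $0$ is neither the top eigenvalue of $L_A$ nor the second smallest eigenvalue of $L_n$.
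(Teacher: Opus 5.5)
Your proof is correct and follows the paper's argument. Your matrix $A$ is exactly the Wigner matrix the paper uses, so $L_A$ is the paper's $\hat{L}_n$, and you identify $\lambda_1(L_A)=\sqrt{np/(1-p)}-\lambda_{n-1}(L_n)/\sqrt{np(1-p)}$ through the common eigenvector $\mathbf{e}$, on the high-probability event that the right-hand side is nonnegative, just as the paper does (your separate check that $0$ is the smallest eigenvalue of $L_n$ is automatic, since a graph Laplacian is positive semidefinite).
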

	
	\begin{proof}
		Define the Laplacian matrix \begin{equation}\label{eq:ER graph matrix transform}
			\hat{L}_n=\frac{-1}{\sqrt{np(1-p)}}\left(L_n-npI_n+p\mathbf{e}\mathbf{e}^{\mathrm{T}}\right).
		\end{equation} $\hat{L}_n$ is the Laplacian of the Wigner matrix $\frac{1}{\sqrt{np(1-p)}}(p\mathbf{e}\mathbf{e}^{\mathrm{T}}-pI_n-A)$, where $A$ is the adjacency matrix of $G(n,p)$. This Wigner matrix satisfies the assumptions of Theorem \ref{thm:largest eigenvalue}. Because $\mathbf{e}$ is the eigenvector of the trivial eigenvalue of $L_n$ and $\hat{L}_n$ we have that \begin{equation}\label{eq:ER graph gap transform}
		\lambda_{1}(\hat{L}_n)=\frac{1}{\sqrt{np(1-p)}}\left(np-\lambda_{n-1}(L_n)\right),
		\end{equation} 
        provided the right-hand side is non-negative (which happens with probability tending to one by Theorem \ref{thm:largest eigenvalue}). 
        The conclusion then follows from Theorem \ref{thm:largest eigenvalue}.
	\end{proof}

	\subsection{Overview}
	The rest of the paper is devoted to the proof of Theorem \ref{thm:largest k eigenvalues}. 
    When there is no confusion, we will use $A$ to denote the $n \times n$ matrix $A_n$. 
    Without loss of generality, we will assume the diagonal entries of $A$ are all zero. 
	In \cite{campbell2022extreme}, the authors studied the case when $A$ is drawn from the GOE; in particular, they exploited the rotational invariance of the GOE in order to decouple the dependence between $A$ and $D_A$. 
	Unfortunately, this initial step is not available when $A$ is an arbitrary Wigner matrix.  
	To get around this problem, we instead use the following two step procedure to prove Theorem \ref{thm:largest k eigenvalues}:
	\begin{enumerate}
	\item[Step 1:] We show the largest entries of $D_A$ have the same asymptotic behavior as the largest order statistics from a sample of iid standard Gaussian random variables.  
	Although this may sound very similar to a classically studied problem, the fact that the entries of $D_A$ change with $n$ and are dependent on one another makes the problem more difficult. 
	The proof requires a careful approximation of the entries of $D_A$ by independent sums of random variables.  
	\item[Step 2: ] We show the largest eigenvalues of $L_A$ are dictated by the largest entries of $D_A$.  
	In particular, we study the Stieltjes transform $m_n(z) = \frac{1}{n} \tr (L_A - z)^{-1}$ where $z \in \mathbb{C}$ with $\Im(z) > 0$.  
	We take $\Re(z) \approx \sqrt{2 \log n}$ which is where we expect the largest eigenvalues to be. 
	In doing so, we derive a self-consistent equation and local law for $m_n$ near the edge of the spectrum which explicitly expresses $m_n$ in terms of the diagonal entries of $D_A$.
	This local law allows us to estimate the locations of the largest eigenvalues in terms of the largest diagonal entries of $D_A$, up to some negligible error terms.  
	\end{enumerate}
	Combining the two steps will complete the proof of Theorem \ref{thm:largest k eigenvalues}. 
	
	We introduce the notation used in the paper in Section \ref{sec:notation} and the main tools in Section \ref{sec:tools}. 
	Section \ref{sec:largest-entries} establishes Step 1.  Step 2 and  the proof of Theorem \ref{thm:largest k eigenvalues} are contained in Section \ref{sec:proof}.

	\section{Notation} 	\label{sec:notation}
	For a complex number $z$, we will use $\Re(z)$ and $\Im(z)$ to denote the real and
	imaginary part, respectively.  We will use $i$ to denote the imaginary unit and also as
	an index of summation; the context should make this clear.  We also let $\mathbb{C}_+$ be the 
	upper-half plane.
	
	For a natural number $N$, $[N]$ is defined to be the discrete interval 
	$\{1, \cdots, N\}$.  For two real numbers $a, b$ with $a < b$, 
	we let $[a,b]$ denote the set $\{r: a \leq r \leq b\}$.  
	
	For a matrix $M$, $M_{ij}$ is the $(i,j)$-entry.  The matrix transpose of $M$ is denoted
	$M^\mathrm{T}$. $\tr(M)$ is the trace of $M$ and for symmetric $M$, we let
	\[
	\lambda_n(M) \leq \cdots \leq \lambda_1(M)
	\]   
	be its eigenvalues.  For arbitrary $M$, $\|M\|$ will denote the operator norm and $I$ the 
	identity matrix.  For a vector $v$, $\|v\|_2$ will mean the standard Euclidean norm. 
	
	For an event $E$, $\P(E)$ is the probability of the event.  We use $\mathrm{1}_E$ 
	to be the indicator function of the event $E$.  For a random variable $\xi$, $\E \xi$
	is the expectation.  $\Phi$ will denote the cumulative distribution function for the standard 
	normal random variable.   
	
	As our parameter $n$ tends to infinity, we use the following asymptotic notation.  
	We use $X = O(Y)$ or $X \simeq Y$ to mean that $|X| \leq C Y$ for some constant $C >0$
	independent of $n$ for all $n \geq C$.  $X = o(Y)$ will mean that $|X| \leq c_n Y$ for some sequence $(c_n)$ that converges to zero as $n \rightarrow \infty$.  
	
	Now, we discuss the notation specific to our model. 
	We will drop the $A$ dependence from the diagonal and Laplacian matrix when the context is clear, simply writing $L = D-A$.
	Let $L^{(k)}$ be the matrix obtained by removing the $k$-th row and column from $L$.
	We define the resolvent of $L$ to be
	$$
	G(z) := (L - z)^{-1}.
	$$
	We let 
	\[
	G^{(k)}(z) := (L^{(k)}-z)^{-1}.
	\]
	Note that $L^{(k)}$ still depends on elements of the $k$-th row and column of $L$.  To address this, we define $\tilde{L}^{(k)}$ to be the matrix with entries
	\[
	\tilde{L}^{(k)}_{ij} := L^{(k)}_{ij} - \delta_{ij} a_{ik} \text{ \qquad for } i,j \neq k.
	\]
	Analogously, we define
	\[
	\tilde{G}^{(k)}(z):= (\tilde{L}^{(k)} - z)^{-1}.
	\]
	We define the corresponding Stieltjes transforms to be
	$$
	m_n(z) := \frac{1}{n} \tr G(z)
	$$
	$$
	m_n^{(k)}(z) := \frac{1}{n-1} \tr G^{(k)}(z)
	$$
	$$
	\tilde{m}_n^{(k)}(z) := \frac{1}{n-1} \tr \tilde{G}^{(k)}(z) 
	$$
	Finally, we define the Stieltjes transform of the free convolution of the semicircle 
	law with the standard normal distribution to be
	$$
	m(z) = \int_{-\infty}^\infty \frac{\phi(x)}{x - z - m(z)} \, dx.
	$$

	For a fixed small $\delta>0$ we consider the ($n$ dependent) spectral domains \begin{equation}
		\begin{aligned}
			S_{\delta}&:=\left\{z\in\C_+: \sqrt{(2-\delta)\log n}\leq \Re z\leq\sqrt{3\log n}, n^{-1/4}\leq \Im z\leq 1 \right\}\\
			\tilde{S}_{\delta}&:=\left\{z\in\C_+: \sqrt{(2-\delta)\log n}\leq \Re z\leq\sqrt{3\log n}, \Im z=n^{-1/4} \right\}\\
			\hat{S}_{\delta}&:=\left\{z\in\C_+: \sqrt{(2-\delta)\log n}\leq \Re z\leq\sqrt{3\log n}, \Im z=\sqrt{2}n^{-1/4} \right\}.\\ 
		\end{aligned}
	\end{equation}

	\section{Tools} \label{sec:tools}
	This section lists some basic tools and results that are needed for the proof of the main results.
	We will use the following basic structural result about the inverse of block matrices.
	\begin{lemma}[Schur's Complement Formula] \label{lem:schur}
		Assuming all the inverse matrices exist, the inverse of the block matrix is given by
		\begin{equation}
			\left( \begin{array}{cc} A & B \\ C & D \end{array} \right)^{-1} = 
			\left( \begin{array}{cc} \mathcal{A}^{-1} & -\mathcal{A}^{-1} B D^{-1} \\ -D^{-1} C \mathcal{A}^{-1} & D^{-1} C \mathcal{A}^{-1} B D^{-1} + D^{-1} \end{array}\right)
		\end{equation} 
		where 
		$$
		\mathcal{A} := A - B D^{-1} C \text{\quad and \quad} \mathcal{D} := D - C A^{-1} B.
		$$
	\end{lemma}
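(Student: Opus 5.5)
The plan is to verify the block inverse formula directly by multiplication; no earlier result is needed. Write $M=\begin{pmatrix} A & B\\ C & D\end{pmatrix}$, assume $D$ and $\mathcal{A}=A-BD^{-1}C$ are invertible, and call the proposed right-hand side $N$. It suffices to check $MN=I$. For square matrices a one-sided inverse is automatically two-sided, so this already shows that $M$ is invertible with $M^{-1}=N$. One could also check $NM=I$ by the same kind of computation.

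The check goes block by block.
\begin{itemize}
\item Top-left block: $A\mathcal{A}^{-1}-BD^{-1}C\mathcal{A}^{-1}=(A-BD^{-1}C)\mathcal{A}^{-1}=\mathcal{A}\mathcal{A}^{-1}=I$.
\item Top-right block: $-A\mathcal{A}^{-1}BD^{-1}+BD^{-1}C\mathcal{A}^{-1}BD^{-1}+BD^{-1}$. Factoring out $\mathcal{A}^{-1}BD^{-1}$ from the first two terms gives $-(A-BD^{-1}C)\mathcal{A}^{-1}BD^{-1}+BD^{-1}=-BD^{-1}+BD^{-1}=0$.
\item Bottom-left block: $C\mathcal{A}^{-1}-DD^{-1}C\mathcal{A}^{-1}=0$.
\item Bottom-right block: $-C\mathcal{A}^{-1}BD^{-1}+DD^{-1}C\mathcal{A}^{-1}BD^{-1}+DD^{-1}=I$.
\end{itemize}

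An alternative route is the factorization
\[
M=\begin{pmatrix} I & BD^{-1}\\ 0 & I\end{pmatrix}\begin{pmatrix}\mathcal{A} & 0\\ 0 & D\end{pmatrix}\begin{pmatrix} I & 0\\ D^{-1}C & I\end{pmatrix},
\]
which one checks by expanding the product. Each factor is then inverted separately; the triangular factors are inverted by negating their off-diagonal blocks. Multiplying the inverses in reverse order reproduces $N$.

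There is no real obstacle here. The only point needing care is the hypotheses: the formula needs $D$ and $\mathcal{A}$ to be invertible, which is what ``assuming all the inverse matrices exist'' means. The matrix $\mathcal{D}$ is not used in the displayed formula. It only gives the symmetric variant obtained by swapping the roles of $A$ and $D$.

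In the paper's applications, $A$ will be the $1\times1$ block $L_{kk}-z$ and $D$ will be $L^{(k)}-z$. For $\Im z>0$ both $L^{(k)}-z$ and $L-z$ are invertible because the matrices are symmetric. Hence $\mathcal{A}\neq0$, and the lemma gives $G_{kk}=\bigl(L_{kk}-z-\sum_{i,j\neq k}L_{ki}G^{(k)}_{ij}L_{jk}\bigr)^{-1}$.
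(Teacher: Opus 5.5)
Your proof is correct. The paper states this lemma without proof as a standard tool, and your block-by-block check of $MN=I$ (with the LDU-type factorization as a second route) is the standard verification it relies on. Your closing claim $\mathcal{A}\neq 0$ is also justified, either by $\det M=\det\mathcal{A}\,\det D$ from your factorization or directly by $\Im\mathcal{A}\le-\eta<0$.
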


	For any invertible matrices $A$ and $B$, the \emph{resolvent identity} states that \begin{equation}
			A^{-1}-B^{-1}=A^{-1}(B-A)B^{-1}.
		\end{equation} Additionally, the \emph{Ward identity} states that for any resolvent matrix $G(z)=(M-z)^{-1}$, where $M$ is an $n\times n$ Hermitian matrix and $z\in\C_{+}$, that \begin{equation}
		\sum_{j=1}^{n}|G_{ij}(z)|^2=\eta^{-1}\Im G_{ii}(z).
		\end{equation} 

	We recall the following estimate on the size of the imaginary component of $m(z)$ 
	in the spectral domains defined above.

	\begin{lemma} \label{lem:immz} \cite[Lemma 3.20]{campbell2022extreme}
		Fix $0 < \delta < 2/9$.  Then, 
		\[
		\sup_{E + \ii \eta \in \tilde{S}_\delta \cup \hat{S}_\delta} \Im m(z) = o(n^{-1/4}).
		\]
	\end{lemma}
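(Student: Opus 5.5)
Although this lemma is quoted from \cite[Lemma 3.20]{campbell2022extreme}, here is how I would prove it directly from the defining equation $m(z) = \int \phi(x)/(x-z-m(z))\,dx$ with $\phi$ the standard Gaussian density. Write $z = E + \ii\eta$ with $E \geq \sqrt{(2-\delta)\log n}$ and $\eta \in \{n^{-1/4}, \sqrt{2}n^{-1/4}\}$. Set $w = z + m(z) = E' + \ii\eta'$, so that $m(z) = \int \phi(x)/(x-w)\,dx$ is the Stieltjes transform of the Gaussian at $w$. Taking imaginary parts gives
\begin{equation*}
\Im m(z) = \eta' \int \frac{\phi(x)}{(x-E')^2+\eta'^2}\,dx, \qquad \eta' = \eta + \Im m(z).
\end{equation*}
Denote the integral by $I(w)$. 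The identity rearranges to $\Im m(z)\,(1 - I(w)) = \eta I(w)$. The whole task is therefore to show $I(w) = o(1)$, which then gives $\Im m(z) = \eta I(w)/(1-I(w)) = o(\eta) = o(n^{-1/4})$.

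\emph{Step 1: a priori control of $\Re m$ and the location of $E'$.} First, $|m(z)| \leq 1/\Im w \leq 1/\eta$ is too weak, so I would argue self-consistently. For $|E'|$ large, $m(z) \approx -1/w$, since the Gaussian is concentrated near $0$. I would show by a continuity (bootstrap) argument in $E$, starting from $E = \sqrt{3\log n}$ or even $E \to \infty$ where $m \to 0$, that $|m(z)| \leq 2/E$ throughout the domain. Then $E' = E + O(1/E) \geq \sqrt{(2-\delta)\log n} - o(1)$.

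\emph{Step 2: bound $I(w)$.} Split the integral at $|x - E'| \geq E'/2$ and $|x-E'| < E'/2$. On the first region the integrand is at most $4\phi(x)/E'^2$, contributing $O(1/\log n)$. On the second region $x \geq E'/2$, and $\eta' \geq \eta$, so the contribution is at most
\begin{equation*}
\eta^{-1}\cdot \pi\, \sup_{x\geq E'/2}\phi(x) \quad\text{or more sharply}\quad \int_{|x-E'|<E'/2} \frac{\phi(x)}{(x-E')^2+\eta^2}\,dx \lesssim \frac{\phi(E'-O(1))}{\eta} + \ldots
\end{equation*}
The sharper estimate is needed. Near $x = E'$ one has $\phi(x) \approx n^{-(1-\delta/2)}$ up to polylogarithmic factors. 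Integrating the Lorentzian gives $\pi\phi/\eta \approx n^{-(1-\delta/2)+1/4}$ times powers of $\log n$. For $x$ between $E'/2$ and $E'$, $\phi(x)$ is larger but the denominator is at least $(x-E')^2$. One handles this by comparing $\phi(x)/(x-E')^2$, which is maximized near $x \approx E' - 2/E'$, and its size is $\phi(E')\,e^{O(1)}E'^2$. All these pieces are $n^{-3/4+\delta/2}\,\mathrm{polylog}(n) = o(1)$ when $\delta < 3/2$, and the hypothesis $\delta < 2/9$ is more than enough. Hence $I(w) = o(1)$.

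\emph{Step 3: conclude and close the bootstrap.} With $I(w) \leq 1/2$ we get $\Im m(z) \leq 2\eta I(w) = o(n^{-1/4})$, uniformly in $z$, since all bounds depend only on the lower bound for $E$. The same splitting applied to $\Re m = \int \phi(x)(x-E')/|x-w|^2\,dx$ gives $\Re m = -1/E' + o(1/E')$, which closes the bootstrap of Step 1. I expect the main obstacle to be this bootstrap: making rigorous that $w$ stays in the region where $\Im w \geq \eta$ and $|m| \lesssim 1/E$, using continuity of $m$ in $E$ together with uniqueness of the solution with $\Im m > 0$. The Gaussian tail estimates themselves are routine.
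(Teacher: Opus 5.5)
The paper gives no proof of this lemma; it imports it from the earlier Gaussian paper, so your argument has to stand on its own. The skeleton is sound. With $w=z+m(z)$ you get $\Im m(z)=\eta I(w)/(1-I(w))$, and in fact $I(w)=\Im m/(\eta+\Im m)<1$ automatically. Once $\Re w\ge E-O(1)$ is known, Gaussian tails give $I(w)=O(1/\log n)$, hence $\Im m(z)=O(\eta/\log n)$ uniformly on $\tilde S_\delta\cup\hat S_\delta$. This works for any fixed $\delta<3/2$, so the hypothesis $\delta<2/9$ is not really used.

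\textbf{The error in Step 2.} The ``sharper estimate'' is wrong as written. Consider $\frac{d}{dx}\log\frac{\phi(x)}{(E'-x)^2}=-x+\frac{2}{E'-x}$. Its zero $x^*\approx E'-2/E'$ is a local \emph{minimum}, not a maximum. The derivative is negative on $(E'/2,x^*)$, so $\phi(x)/(x-E')^2$ grows as $x$ moves left toward $E'/2$, and it blows up as $x\to E'$. Consequently the contribution of $E'/2<x<E'-1$ to $I(w)$ is of order $\phi(E'/2)/E'^3\approx n^{-1/4+\delta/8}(\log n)^{-3/2}$. This is much larger than the $n^{-3/4+\delta/2}\,\mathrm{polylog}(n)$ you claim. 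Bounding that region by $\phi(E')e^{O(1)}E'^2$ times its length is simply false.

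\textbf{How to repair it.} The conclusion survives with a cleaner split of $\{|x-E'|<E'/2\}$.
\begin{enumerate}
\item For $1\le|x-E'|<E'/2$, bound the integrand by $\phi(x)$. This gives at most $\P(Z\ge E'/2)=o(1)$.
\item For $|x-E'|<1$, use $\phi(x)\le\phi(E'-1)=\phi(E')e^{E'-1/2}$ and $\int\frac{dx}{(x-E')^2+\eta'^2}\le\pi/\eta$. This gives $n^{-3/4+\delta/2+o(1)}$. The factor $e^{E'}$ is $n^{o(1)}$, not polylogarithmic, but it is harmless.
\end{enumerate}
Adding the $4/E'^2$ from $|x-E'|\ge E'/2$ gives $I(w)=O(1/\log n)$.

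\textbf{Step 1 can be simplified.} Your bootstrap does close. No uniqueness issue arises, since $m$ is a fixed Stieltjes transform and hence continuous in $E$. The self-improvement $|m|\le 2/E\Rightarrow|m|\le(1+o(1))/E$ follows from the same splitting. But the bootstrap is unnecessary. We have $m(z)=m_\gamma(w)$ with $w\in\C_+$, and the Gaussian Stieltjes transform is bounded on all of $\C_+$: $\Im m_\gamma\le\pi\|\phi\|_\infty$, and symmetrizing the real part over $|x-\Re w|<1$ gives $|\Re m_\gamma|\le 1+2\|\phi'\|_\infty$. Thus $|m(z)|\le C$ for every $z\in\C_+$, which gives $E'\ge E-C$ directly. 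That is all Step 2 needs. The refined expansion $\Re m=-\frac{1}{E'}(1+o(1))$ then follows without any continuity argument.
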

	
	The next lemma controls the distances between Stieltjes transforms when 
	removing rows and columns.   
		\begin{lemma}[Lemma 7.5, \cite{MR3210147}] \label{lem:rank1}
		\[
		|m_n(z) - m_n^{(k)}(z)| \leq \frac{1}{n \eta}.
		\]
	\end{lemma}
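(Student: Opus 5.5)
The plan is to compare the two traces through the determinant identity for a principal minor and then control the normalization mismatch separately. Throughout, $z=E+\ii\eta$ with $\eta>0$, and $L$ is real symmetric.

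\emph{Step 1: an exact formula for the trace difference.} Cramer's rule gives $G_{kk}(z)=\det(L^{(k)}-z)/\det(L-z)$, and $\tr G(z) = -\frac{\dd}{\dd z}\log\det(L-z)$ (locally, away from the real axis). Differentiating $\log G_{kk}$ therefore yields
\[
\tr G(z)-\tr G^{(k)}(z)=\frac{\dd}{\dd z}\log G_{kk}(z)=\frac{(G(z)^2)_{kk}}{G_{kk}(z)} ,
\]
where we used $\frac{\dd}{\dd z}G=G^2$.

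\emph{Step 2: bounding the ratio.} Since $L$ is real symmetric, $G^{\mathrm T}=G$, so
\[
|(G^2)_{kk}|=\Bigl|\sum_j G_{kj}G_{jk}\Bigr|\le\sum_j|G_{kj}|^2 .
\]
By the Ward identity this sum equals $\eta^{-1}\Im G_{kk}\le\eta^{-1}|G_{kk}|$. Hence
\[
|\tr G-\tr G^{(k)}|\le\frac1\eta .
\]
Equivalently, this is the Cauchy interlacing bound: the eigenvalue counting functions of $L$ and $L^{(k)}$ differ by at most $1$ pointwise.

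\emph{Step 3: the normalization.} Write
\[
m_n-m_n^{(k)}=\frac{1}{n}\bigl(\tr G-\tr G^{(k)}\bigr)-\frac{1}{n}\,m_n^{(k)} .
\]
Since $|m_n^{(k)}|\le 1/\eta$ trivially, this gives $O(1/(n\eta))$ immediately, with constant $2$.

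\emph{The main obstacle} is sharpening this constant to $1$. I would handle it through the spectral representation. Let $\lambda_j$ and $\mu_j$ be the eigenvalues of $L$ and $L^{(k)}$, which interlace. Then
\[
n\bigl(m_n-m_n^{(k)}\bigr)=\sum_{j=1}^{n}\frac{1}{\lambda_j-z}-\frac{n}{n-1}\sum_{j=1}^{n-1}\frac{1}{\mu_j-z}
\]
can be expressed as $\int (x-z)^{-2}\,h(x)\,\dd x$, where $h=N_L-\frac{n}{n-1}N_{L^{(k)}}$ is a difference of counting functions. Interlacing shows that $h$ takes values in an interval of length at most $1$, so subtracting a constant (whose integral against $(x-z)^{-2}$ vanishes) reduces to a function with sup norm at most $1/2$. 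I would then try to bound the integral with care about phases rather than using $\int|x-z|^{-2}\,\dd x=\pi/\eta$, which only gives the constant $\pi/2$.

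For the downstream applications in this paper only the order $O(1/(n\eta))$ matters, so Steps 1–3 already suffice in substance. If the sharp constant resists, I would settle for the bound with constant $2$ obtained in Step 3.
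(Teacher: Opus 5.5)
Your Steps 1--3 are correct, and they prove $|m_n(z)-m_n^{(k)}(z)|\le 2/(n\eta)$. The identity $\tr G-\tr G^{(k)}=(G^2)_{kk}/G_{kk}$ together with $|(G^2)_{kk}|\le\sum_j|G_{kj}|^2=\eta^{-1}\Im G_{kk}$ gives $|\tr G-\tr G^{(k)}|\le\eta^{-1}$. The mismatch between the normalizations $1/n$ and $1/(n-1)$ then costs at most $|m_n^{(k)}|/n\le 1/(n\eta)$ more.

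The gap is in the part of your plan aimed at the constant $1$, and it cannot be closed. With the paper's definition $m_n^{(k)}=\frac{1}{n-1}\tr G^{(k)}$, the inequality with constant $1$ is false. Take $k=n$ and
\[
L=\begin{pmatrix} E I_{n-1} & v\\ v^{\mathrm T} & E\end{pmatrix},\qquad \|v\|_2=s>0 .
\]
Then $m_n^{(n)}(z)=\ii/\eta$. The spectrum of $L$ is $E$ with multiplicity $n-2$ together with $E\pm s$, so
\[
m_n(z)-m_n^{(n)}(z)=-\frac{2\ii}{n\eta}\cdot\frac{s^2}{s^2+\eta^2}.
\]
Its modulus exceeds $1/(n\eta)$ as soon as $s>\eta$, and tends to $2/(n\eta)$ as $s/\eta\to\infty$.

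The same happens inside the class of Laplacians. Let $L$ be $E$ times the Laplacian of the star $K_{1,n-1}$, and let $k$ be the center. Then $L^{(k)}=EI_{n-1}$, the spectrum of $L$ is $\{0,E,nE\}$, and $n\eta\,|m_n-m_n^{(k)}|\to 2$ whenever $E/\eta\to\infty$, for instance for $z\in\tilde S_\delta$. So your constant $2$ is sharp, and no phase-sensitive estimate can improve it.

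Your proposed reduction is also wrong on its own terms. The function $h=N_L-\frac{n}{n-1}N_{L^{(k)}}$ equals $+1$ on $[\lambda_n,\mu_{n-1})$ and $-1$ on $[\mu_1,\lambda_1)$. Its range therefore has length $2$, not $1$, and subtracting a constant only brings the sup norm down to $1$.

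What does hold with constant $1$ is exactly your Step 2: $|\tr G-\tr G^{(k)}|\le\eta^{-1}$, equivalently $|m_n-\frac{n-1}{n}m_n^{(k)}|\le\frac{1}{n\eta}$. The lemma as written attaches this constant to the version with mismatched normalizations. The paper only uses this lemma through bounds stated with $\lesssim$ (e.g.\ in the proof of Lemma \ref{lem:r_k}), so only the order $1/(n\eta)$ matters downstream. You should drop the attempt at the sharp constant and state the result as $|m_n-m_n^{(k)}|\le 2/(n\eta)$, which your Steps 1--3 prove completely.
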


	We now reference a couple large deviation results which will of use.  
	
	\begin{lemma} \label{lem:largedev}
		Let $X_1, \dots, X_n$ be independent mean-zero sub-Gaussian random variables with $\|X_i\|_{\psi_2} \leq K$, and let $a = (a_1, \dots, a_n) \in \R^n$.  Then, for every $t \geq 0$, we have
		\[
		\P \left( \left| \sum_{i=1}^n a_i X_i \right| \geq t \right)  \leq 2 \exp \left(- \frac{c t^2}{K^2 \|a\|_2^2} \right)
		\]
	\end{lemma}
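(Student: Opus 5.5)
The plan is the standard Chernoff (exponential moment) argument. Write $S=\sum_{i=1}^n a_iX_i$. By symmetry it suffices to bound $\P(S\geq t)$, since applying the same bound to $-X_i$ (which have the same $\psi_2$-norm) handles $\P(S\leq -t)$; the factor $2$ comes from adding the two tails. If $a=0$ there is nothing to prove, so assume $\|a\|_2>0$.

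The key ingredient is the moment generating function characterization of mean-zero sub-Gaussian variables. There is an absolute constant $C$ such that, if $\E X=0$ and $\|X\|_{\psi_2}\leq K$, then
\begin{equation*}
\E\exp(\lambda X)\leq \exp(C\lambda^2K^2)\qquad\text{for all }\lambda\in\R .
\end{equation*}
I would prove this from \eqref{eq:MGF def of subgaussian}. For $|\lambda|K\leq 1$, use $e^x\leq x+e^{x^2}$ and $\E X=0$ to get $\E e^{\lambda X}\leq \E e^{\lambda^2X^2}$. Since $\lambda^2\leq 1/K^2$, Jensen (or H\"older with exponent $1/(\lambda^2K^2)\geq 1$) gives $\E e^{\lambda^2X^2}\leq (\E e^{X^2/K^2})^{\lambda^2K^2}\leq 2^{\lambda^2K^2}\leq e^{\lambda^2K^2}$. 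For $|\lambda|K>1$, use $\lambda X\leq \lambda^2K^2/2+X^2/(2K^2)$, which gives $\E e^{\lambda X}\leq e^{\lambda^2K^2/2}\sqrt{2}\leq e^{\lambda^2K^2}$, because $\sqrt 2\leq e^{1/2}\leq e^{\lambda^2K^2/2}$. This step is the only place requiring care, and I expect it to be the main (if mild) obstacle, namely getting a bound uniform over all $\lambda$ with an absolute constant.

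Given this, independence yields
\begin{equation*}
\E e^{\lambda S}=\prod_{i=1}^n \E e^{\lambda a_iX_i}\leq \exp\Big(C\lambda^2K^2\|a\|_2^2\Big).
\end{equation*}
By Markov's inequality, $\P(S\geq t)\leq \exp(-\lambda t+C\lambda^2K^2\|a\|_2^2)$. Optimizing at $\lambda=t/(2CK^2\|a\|_2^2)$ gives $\P(S\geq t)\leq \exp(-t^2/(4CK^2\|a\|_2^2))$. Combining this with the lower tail yields the claim with $c=1/(4C)$.
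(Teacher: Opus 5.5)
Your proof is correct and complete. The paper gives no proof of its own and quotes this lemma as the standard general Hoeffding inequality (cf.\ Theorem 2.6.3 in \cite{Vershynin2018}); its textbook proof follows essentially your route: a uniform bound $\E e^{\lambda X}\leq e^{C\lambda^2K^2}$ for mean-zero sub-Gaussian variables, then tensorization over the independent summands and a Chernoff bound.

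One detail is worth stating explicitly: $\|X\|_{\psi_2}\leq K$ gives $\E e^{X^2/K^2}\leq 2$ because the infimum in \eqref{eq:subgaussian norm} is attained, by monotone convergence. Also, your two cases actually yield $C=1$, so the lemma holds with $c=1/4$.
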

	
	\begin{lemma} [Hanson-Wright Inequality] \label{lem:HW}
		Let $X_1, \dots, X_n$ be independent mean-zero sub-Gaussian random variables with $\|X_i\|_{\psi_2} \leq K$.  Let $A$ be an $n \times n$ matrix.  Then,
		\[
		\P\left( |X^\mathrm{T} A X - \E X^\mathrm{T} A X| \geq t\right) \leq 2 \exp \left[ -c \min \left(\frac{t^2}{K^4 \|A\|_{HS}^2}, \frac{t}{K^2 \|A\|} \right) \right]
		\]
	\end{lemma}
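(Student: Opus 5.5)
This is a standard concentration inequality: the Hanson--Wright bound for quadratic forms in independent sub-Gaussian variables (Rudelson--Vershynin). I would reduce to the case $K=1$ by rescaling $X\mapsto X/K$. The quadratic form then splits into a diagonal part and an off-diagonal part,
\[
X^{\mathrm T}AX-\E X^{\mathrm T}AX=\sum_i a_{ii}(X_i^2-\E X_i^2)+\sum_{i\neq j}a_{ij}X_iX_j .
\]
It suffices to prove the tail bound at level $t/2$ for each piece.

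\emph{Diagonal part.} The variables $X_i^2-\E X_i^2$ are independent, mean zero and sub-exponential, with $\psi_1$-norm $O(1)$. Bernstein's inequality then gives the tail
\[
2\exp\Bigl[-c\min\Bigl(\tfrac{t^2}{\sum_i a_{ii}^2},\,\tfrac{t}{\max_i|a_{ii}|}\Bigr)\Bigr].
\]
Since $\sum_i a_{ii}^2\le\|A\|_{HS}^2$ and $\max_i|a_{ii}|\le\|A\|$, this has the required form.

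\emph{Off-diagonal part.} This is the main step; it requires bounding the moment generating function $\E\exp(\lambda S)$, where $S=\sum_{i\neq j}a_{ij}X_iX_j$.

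1. Decoupling. I would bound $\E\exp(\lambda S)\le\E\exp(4\lambda X^{\mathrm T}AX')$, with $X'$ an independent copy of $X$. This follows by introducing iid Bernoulli selectors $\delta_i$ and using Jensen's inequality.

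2. Conditioning on $X'$. Conditionally, $X^{\mathrm T}AX'$ is a linear form in $X$, so by the sub-Gaussian bound underlying Lemma~\ref{lem:largedev},
\[
\E_X\exp(4\lambda X^{\mathrm T}AX')\le\exp\bigl(C\lambda^2\|AX'\|_2^2\bigr).
\]

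3. Gaussian comparison. To handle $\E\exp(C\lambda^2\|AX'\|_2^2)$, I would replace $X'$ by a standard Gaussian vector $g$, via the trick of writing
\[
\exp\bigl(C\lambda^2\|AX'\|_2^2\bigr)=\E_g\exp\bigl(\sqrt{2C}\,\lambda\, g^{\mathrm T}AX'\bigr)
\]
and applying the sub-Gaussian bound once more in $X'$. This reduces the problem to $\E\exp(C'\lambda^2\|Ag\|_2^2)$.

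4. Gaussian computation. By rotation invariance, $\|Ag\|_2^2=\sum_i s_i^2g_i^2$, where the $s_i$ are the singular values of $A$. Then
\[
\E\exp\Bigl(C'\lambda^2\sum_i s_i^2g_i^2\Bigr)=\prod_i(1-2C'\lambda^2s_i^2)^{-1/2}\le\exp\bigl(C''\lambda^2\|A\|_{HS}^2\bigr)
\]
whenever $|\lambda|\le c/\|A\|$.

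5. Chernoff bound. Optimizing $\P(S\ge t)\le\exp(-\lambda t+C\lambda^2\|A\|_{HS}^2)$ over $|\lambda|\le c/\|A\|$ produces exactly the $\min(t^2/\|A\|_{HS}^2,\ t/\|A\|)$ tail. The lower tail follows by applying the same argument to $-A$.

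6. Conclusion. Combining the two parts and restoring $K$ gives the stated inequality.

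The main obstacle is the off-diagonal moment generating function bound, specifically the decoupling step together with the Gaussian replacement of $X'$. Everything else is a routine Bernstein or Chernoff computation.
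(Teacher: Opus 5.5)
Your proposal is correct. The paper gives no proof of this lemma and simply uses it as the standard Rudelson--Vershynin Hanson--Wright inequality, and your argument (Bernstein for the diagonal part; decoupling, conditioning, Gaussian comparison and the $\chi^2$ moment generating function for the off-diagonal part; then Chernoff over $|\lambda|\le c/\|A\|$) is exactly that standard proof, as in Theorem 6.2.1 of Vershynin's book. The only bookkeeping left implicit is that combining the two halves gives a prefactor $4$, which becomes $2$ after shrinking $c$, and that for the complex resolvents to which the paper applies the lemma you would run the argument on $\Re A$ and $\Im A$ separately.
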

	
	\begin{lemma}[Theorem 6.3.2 in \cite{Vershynin2018}]\label{lem:concentration of vec-matrix}
		Let $B$ be an $m\times n$ matrix and $X$ an $n$-dimensional random vector with independent, mean zero, unit variance, sub-Gaussian coordinates. Then,\begin{equation}
			\left\| \|BX\|_{2}-\|B\|_{F} \right\|_{\psi_2}\leq CK^2\|B\|,
		\end{equation}  where $\|\cdot\|_{F}$ is the Frobenius norm of a matrix and $K$ is the max of the sub-Gaussian norms of the entries of $X$. 
	\end{lemma}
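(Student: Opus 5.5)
This is a standard result (Vershynin, HDP Theorem 6.3.2), and I would prove it by reducing the norm deviation to a sub-Gaussian tail for $\|BX\|_2^2$ coming from Lemma \ref{lem:HW}. By homogeneity we may assume $\|B\|=1$. Write $\|BX\|_2^2 = X^{\mathrm{T}}(B^{\mathrm{T}}B)X$. Since the coordinates are independent, mean zero and of unit variance, $\E X^{\mathrm{T}}B^{\mathrm{T}}BX = \tr(B^{\mathrm{T}}B)=\|B\|_F^2$. Apply Lemma \ref{lem:HW} with the matrix $B^{\mathrm{T}}B$, using $\|B^{\mathrm{T}}B\|=\|B\|^2=1$ and $\|B^{\mathrm{T}}B\|_{HS}\le \|B\|\,\|B\|_F=\|B\|_F$. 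Since unit variance forces $K\gtrsim 1$, this gives for all $u\ge0$
\[
\P\left(\left|\|BX\|_2^2-\|B\|_F^2\right|\ge u\right)\le 2\exp\left[-\frac{c}{K^4}\min\left(\frac{u^2}{\|B\|_F^2},u\right)\right].
\]

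Next I would convert this to a bound on $\|BX\|_2-\|B\|_F$ using the elementary inequality $|z-1|\ge\delta \Rightarrow |z^2-1|\ge\max(\delta,\delta^2)$ for $z\ge0$, applied with $z=\|BX\|_2/\|B\|_F$. Take $u=\|B\|_F^2\max(\delta,\delta^2)$ with $\delta=t/\|B\|_F$. Then $\min(u^2/\|B\|_F^2,u)\ge \|B\|_F^2\delta^2 = t^2$ in both regimes $\delta\le1$ and $\delta>1$. This is the key step: the mixed sub-exponential/sub-Gaussian Hanson--Wright tail becomes a pure Gaussian tail after taking square roots. It yields
\[
\P\left(\left|\|BX\|_2-\|B\|_F\right|\ge t\right)\le 2\exp(-ct^2/K^4).
\]

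Finally, a two-sided tail bound $2e^{-ct^2/K^4}$ is equivalent, up to absolute constants, to $\|\|BX\|_2-\|B\|_F\|_{\psi_2}\le CK^2$, which is the claim when $\|B\|=1$. Rescaling $B$ restores the factor $\|B\|$. The only delicate point is the case analysis in the square-root step; everything else is a direct invocation of Lemma \ref{lem:HW}. The proof does not need $B$ to be square, and nothing about $m$ versus $n$ enters.
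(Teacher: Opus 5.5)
Your proposal is correct. The paper gives no proof of its own and simply cites Vershynin's Theorem 6.3.2, and your argument is that standard proof: Hanson--Wright applied to $B^{\mathrm{T}}B$ (with $K\gtrsim 1$ from unit variance), then the implication $|z-1|\ge\delta\Rightarrow|z^2-1|\ge\max(\delta,\delta^2)$ to turn the mixed tail into a pure sub-Gaussian one. The same proof also covers the complex matrices $B=\tilde G^{(k)}$ to which the paper actually applies the lemma: use $B^*B$ in place of $B^{\mathrm{T}}B$, and note that its antisymmetric imaginary part contributes nothing to the quadratic form in the real vector $X$.
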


	The next lemma is a consequence of  \cite[Theorem 1, Chapter VIII]{petrov1972independent}
	and provides a universality statement for moderate deviations.  
	\begin{lemma}\label{lem:petrov's lemma}
		Let $S_n = X_1 + \dots + X_n$ where $X_i$ are iid,
		\[
		\E X_i = 0,\quad  \E X_i^2 = 1
		\]
		and $\E e^{\theta |X_i|} < \infty$ for some $\theta > 0$. Then, for every $C > 0$,  
		\[
		\sup_{0 \leq t \leq C \sqrt{\log n}} \left| \frac{\P(S_n/ \sqrt{n} > t)}{1 - \Phi(t)} - 1 \right| \rightarrow 0.
		\]
	\end{lemma}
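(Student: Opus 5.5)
The plan is to read off Lemma~\ref{lem:petrov's lemma} directly from Cram\'er's moderate deviation theorem in the form of \cite[Theorem 1, Chapter VIII]{petrov1972independent}. That theorem assumes iid summands with mean zero and unit variance satisfying Cram\'er's condition, which is exactly $\E e^{\theta|X_i|}<\infty$ for some $\theta>0$. Under these hypotheses it gives, for $x\geq 0$ with $x=o(\sqrt n)$,
\begin{equation*}
\frac{\P(S_n/\sqrt n > x)}{1-\Phi(x)} = \exp\left(\frac{x^3}{\sqrt n}\,\lambda\!\left(\frac{x}{\sqrt n}\right)\right)\left[1+O\!\left(\frac{x+1}{\sqrt n}\right)\right].
\end{equation*}
Here $\lambda(u)$ is the Cram\'er series, a power series with positive radius of convergence determined by the cumulants of $X_1$. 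The implied constant in the $O(\cdot)$ depends only on the law of $X_1$.

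First, I will check that the range $0\leq t\leq C\sqrt{\log n}$ is admissible, i.e.\ $t=o(\sqrt n)$ uniformly on this range. This is immediate because $C\sqrt{\log n}/\sqrt n\to 0$. Petrov's statement is formulated for $0\le x\le \epsilon_n\sqrt n$ with any $\epsilon_n\to 0$, and I will take $\epsilon_n=C\sqrt{\log n/n}$.

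Second, I will bound both correction factors uniformly in $t$:
\begin{enumerate}
\item For all large $n$, $u=t/\sqrt n$ lies in a fixed compact subinterval of the disc of convergence of $\lambda$, so $|\lambda(u)|\leq M$ for some $M$ independent of $n$.
\item Consequently
\begin{equation*}
\sup_{0\le t\le C\sqrt{\log n}}\left|\frac{t^3}{\sqrt n}\lambda\!\left(\frac{t}{\sqrt n}\right)\right|\le \frac{MC^3(\log n)^{3/2}}{\sqrt n}\to 0,
\end{equation*}
so the exponential factor tends to $1$ uniformly in $t$.
\item Similarly, $\sup_{t}(t+1)/\sqrt n\le (C\sqrt{\log n}+1)/\sqrt n\to0$, so the bracketed factor also tends to $1$ uniformly.
\end{enumerate}
Taking the product of these two factors gives the claimed uniform convergence of the ratio to $1$.

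The only real obstacle is bookkeeping. One must make sure that the error term in Petrov's theorem is genuinely uniform over the whole range $0\le x\le\epsilon_n\sqrt n$ rather than pointwise, and that it depends only on the distribution of $X_1$ (true in Petrov's formulation). Should one prefer to avoid relying on uniformity near $t=0$, the bounded range $t\in[0,1]$ can be handled separately by the Berry--Esseen theorem. On that range $1-\Phi(t)\geq 1-\Phi(1)>0$, so an additive error of $O(n^{-1/2})$ in the numerator translates into a multiplicative error $1+o(1)$. The range $t\in[1,C\sqrt{\log n}]$ is then covered by the argument above.
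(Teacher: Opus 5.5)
Your proposal is correct and matches the paper, which gives no separate argument and simply cites Cram\'er's moderate deviation theorem \cite[Theorem 1, Chapter VIII]{petrov1972independent}; your checks that $t/\sqrt n\to0$ on the range and that both the Cram\'er-series factor and the $O((t+1)/\sqrt n)$ factor tend to $1$ uniformly are exactly the bookkeeping that citation leaves implicit. One small addition: Petrov's convention of left-continuous distribution functions means the cited estimate may be for $\P(S_n/\sqrt n\ge x)$ rather than $>$, in which case the strict version follows by applying the uniform estimate at $s\downarrow t$ (on a slightly larger range $[0,(C+1)\sqrt{\log n}]$) and using continuity of $\Phi$.
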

	The final lemma in this section is a simple calculation.
	\begin{lemma}\label{lem:Gaussian tail max idk}
		For any compact set $K \subset \mathbb{R}$, 
		\[
		\sup_{x \in K} |n ( 1- \Phi(u_n(x)))- e^{-x}| \rightarrow 0,
		\] where $u_n(x)=b_n'+\frac{x}{a_n}$. 
		Moreover, if $d_n:K\rightarrow\R$ is such that $\sup_{x \in K} a_n |d_n(x)| \rightarrow 0$ as $n\rightarrow\infty$, 
		then
		\[
		\sup_{x \in K} |n (1 - \Phi(u_n(x) + d_n(x))) - e^{-x} | \rightarrow 0.
		\]
	\end{lemma}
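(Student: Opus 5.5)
The plan is to use the explicit form of $b_n'$ and the standard Mills ratio asymptotics
\[
1-\Phi(t)=\frac{1}{t\sqrt{2\pi}}e^{-t^2/2}\bigl(1+O(t^{-2})\bigr),\qquad t\to\infty,
\]
and reduce both claims to a uniform expansion of $u_n(x)^2/2$ and $\log u_n(x)$ over $x\in K$.

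For the first claim, fix a compact $K$. Set $u=u_n(x)=\sqrt{2\log n}-\frac{\log\log n+\log(4\pi)-2x}{2\sqrt{2\log n}}$. Squaring gives
\[
\frac{u^2}{2}=\log n-\tfrac12\log\log n-\tfrac12\log(4\pi)+x+O\!\left(\frac{(\log\log n)^2}{\log n}\right),
\]
and the error is uniform in $x\in K$. Also $\log u=\tfrac12\log(2\log n)+o(1)$ uniformly. Hence
\[
n\bigl(1-\Phi(u)\bigr)=\exp\!\Bigl(\log n-\tfrac{u^2}{2}-\log u-\tfrac12\log(2\pi)+o(1)\Bigr).
\]
Substituting the expansions, the exponent becomes
\[
\tfrac12\log\log n+\tfrac12\log(4\pi)-x-\tfrac12\log 2-\tfrac12\log\log n-\tfrac12\log(2\pi)+o(1)=-x+o(1).
\]
Since $K$ is compact, $e^{-x}$ is bounded on $K$, so $|n(1-\Phi(u_n(x)))-e^{-x}|\to0$ uniformly.

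For the second claim, write $v=u+d_n(x)$. Then $v^2/2-u^2/2=u\,d_n+d_n^2/2$. Because $u=a_n(1+o(1))$ uniformly, and $a_n d_n\to0$ uniformly (which also gives $d_n\to0$), this difference is $o(1)$ uniformly. Likewise $\log v-\log u=o(1)$. The same exponent computation then gives $n(1-\Phi(v))=e^{-x+o(1)}$ uniformly on $K$. An equivalent route is to note that $u_n(x)+d_n(x)=u_n(x+a_nd_n(x))$ and to apply the first part on the slightly enlarged compact set $K+[-1,1]$, combined with the uniform continuity of $e^{-x}$ there.

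The only real care needed is keeping the $O(t^{-2})$ Mills ratio error and the $(\log\log n)^2/\log n$ squaring error uniform in $x$. Both are immediate because $K$ is bounded, so I expect no substantive obstacle.
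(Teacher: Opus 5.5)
Your proposal is correct and follows essentially the same route as the paper: Mills-ratio asymptotics for $1-\Phi$ combined with the uniform expansion $u_n(x)^2 = 2\log n - \log\log n - \log(4\pi) + 2x + o(1)$ on $K$, after which the constants cancel to leave $e^{-x}$. Your observation that $u_n(x)+d_n(x)=u_n(x+a_nd_n(x))$, combined with applying the first part on $K+[-1,1]$, is a clean way to make precise the paper's remark that the same argument applies to the perturbed case.
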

	\begin{proof}
		Let $c_n = \frac{\log \log n + \log (4 \pi)}{2}$ and observe that $u_n(x) = a_n + \frac{x - c_n}{a_n}$.  For $x \in K$,
		\[
		u_n(x)^2 = 2 \log n + 2 (x - c_n) + o(1).
		\]
		Note that
		\[
		1 - \Phi(u_n(x)) = \frac{1 + o(1)}{u_n(x) \sqrt{2 \pi}} \exp(-u_n(x)^2/2).
		\]
		Since $u_n(x) \rightarrow \sqrt{2 \log n}$ and $e^{c_n} = \sqrt{4 \pi \log n}$,
		\[
		n ( 1- \Phi(u_n(x))) = (1 + o(1)) \frac{e^{c_n - x}}{\sqrt{4 \pi \log n}} \rightarrow e^{-x}.
		\]
		The same argument applies to $u_n(x)+d_n(x)$. 
	\end{proof}


\subsection{Counting Lemma}
  A key element of our later arguments is a counting result on the number of diagonal elements 
  near the critical window.

		\begin{proposition}\label{prop:eigen count}
		Let $L$ be the Laplacian of an $n \times n$ Wigner matrix with sub-Gaussian atom variable.  Then for any $\varepsilon \in (0,1/2)$, there exists $\delta > 0$ so that with overwhelming probability,
		\[
		\left| \{1 \leq i \leq n: L_{ii} \geq \sqrt{(2 - \delta) \log n}\} \right| = O_{\varepsilon}(n^{\varepsilon}).
		\]
		Similarly, for any $\varepsilon' \in (0,1/2)$ there exists a $\delta' > 0$ so that
		\[
		\left| \{1 \leq i \leq n : L_{ii} \geq \sqrt{ (2 - \delta') \log n}\}\right| = \Omega_{\varepsilon'}(n^{\varepsilon'})
		\]
		with overwhelming probability. Moreover, for any $\eps''\in(0,1/2)$, \begin{equation}\label{eq:weak count}
			\left|\left\{1\leq i\leq n: L_{ii}\geq (2\log n)^{\frac{1}{2}-\eps''} \right\} \right|=o\left(\frac{n\log^{\eps''}n}{\sqrt{\log n}} \right),
		\end{equation} with overwhelming probability. 
	\end{proposition}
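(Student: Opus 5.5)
The plan is to reduce the count to a sum of nearly independent indicators and then apply concentration. Write $L_{ii}=D_{ii}=\sum_{j\neq i}a_{ij}=\frac{1}{\sqrt n}\sum_{j\ne i}\xi_{ij}$. Each $L_{ii}$ is a normalized sum of $n-1$ iid sub-Gaussian variables, so Lemma \ref{lem:petrov's lemma} applies uniformly for $t\le C\sqrt{\log n}$ (the normalization by $\sqrt{n}$ instead of $\sqrt{n-1}$ only changes $t$ by a factor $1+O(1/n)$, which is harmless at these scales). For a threshold $t=\sqrt{(2-\delta)\log n}$ this gives
\[
p_n:=\P(L_{ii}\ge t)=(1+o(1))(1-\Phi(t))=n^{-(1-\delta/2)+o(1)},
\]
so $\E|\{i:L_{ii}\ge t\}|=n^{\delta/2+o(1)}$. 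For the first claim, given $\eps$ choose $\delta<\eps$, so the mean is $\ll n^{\eps}$. For the second claim, given $\eps'$ choose $\delta'>2\eps'$ (still small), so the mean is $\gg n^{\eps'}$. For \eqref{eq:weak count}, take $t=(2\log n)^{1/2-\eps''}$. Then $1-\Phi(t)\le e^{-t^2/2}=\exp(-\tfrac12(2\log n)^{1-2\eps''})$, which is $o(\log^{\eps''-1/2}n)$, so the mean is $o(n\log^{\eps''}n/\sqrt{\log n})$. Here Petrov is not even needed: Lemma \ref{lem:largedev} gives the sub-Gaussian tail bound directly, up to the constant $c$ in the exponent. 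That constant is still fine, since the bound is a stretched exponential in $\log n$ and therefore beats any power of $\log n$.

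The main obstacle is the dependence: $L_{ii}$ and $L_{jj}$ share the entry $a_{ij}$. I will decouple in one of two ways.

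\textbf{Splitting into blocks.} Partition $[n]$ into two halves $I_1,I_2$. For $i\in I_1$ define $\hat L_{ii}=\sum_{j\in I_2}a_{ij}$. These are independent across $i\in I_1$, since they involve disjoint sets of entries. The remainder $R_i=\sum_{j\in I_1,j\ne i}a_{ij}$ is a sum over $I_1$ only, which is a second Wigner-type structure.

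\textbf{Bounding rather than removing the dependence.} This is simpler, and the version I will pursue. Since $|a_{ij}|\le C\sqrt{\log n}/\sqrt n$ with overwhelming probability by sub-Gaussianity, replace each $L_{ii}$ by $L_{ii}'=L_{ii}-a_{ij}$ when comparing pairs. More cleanly, use moments: compute the $q$-th factorial moment of the count $N=\sum_i\mathbf 1\{L_{ii}\ge t\}$ for fixed large $q$. For distinct $i_1,\dots,i_q$, the vector $(L_{i_1i_1},\dots,L_{i_qi_q})$ equals an independent part $\sum_{j\notin\{i_1,\dots,i_q\}}a_{i_\ell j}$ plus a perturbation of size $O(q\sqrt{\log n/n})$ with overwhelming probability. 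Shifting $t$ by this amount changes tail probabilities only by a factor $1+o(1)$, because $t\cdot q\sqrt{\log n/ n}\to0$. Hence $\E\binom{N}{q}\le (1+o(1))\binom nq p_n^q$, up to an overwhelming-probability error. Markov's inequality with $q$ large then gives $N\le n^{\eps}$ except with probability $n^{-q(\eps-\delta/2)+o(1)}$, which can be made smaller than any fixed power by taking $q$ large. That is the overwhelming-probability upper bound; the same argument, using the tail bound, gives \eqref{eq:weak count}.

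For the lower bound, use the block decoupling. Restrict to $i\in I_1$ and condition on $\{a_{ij}:i,j\in I_1\}$. On the overwhelming-probability event that $\max_{i\in I_1}|R_i|\le C\sqrt{\log n}$, each $\hat L_{ii}$ needs only to exceed $t-R_i$; but $R_i$ could be negative and of order $\sqrt{\log n}$, so instead argue as follows. The $R_i$ are themselves row sums over $I_1$, so by a Markov/second-moment argument at least half of $i\in I_1$ have $R_i\ge 0$ with overwhelming probability, using symmetry-free bounds: $\P(R_i\ge -\log^{-1}n)\ge c$ by the CLT. Then, conditionally, the indicators $\mathbf 1\{\hat L_{ii}\ge t+\log^{-1}n\}$ for these $i$ are independent Bernoulli variables with mean $n^{-(1-\delta'/2)+o(1)}$, applying Lemma \ref{lem:petrov's lemma} to $\hat L_{ii}\sqrt{2}$, a sum of $n/2$ variables. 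The threshold shift costs only a constant in the exponent, and I adjust $\delta'$ to absorb it. Finally, the Chernoff bound for a binomial with mean $n^{\delta'/2+o(1)}\gg n^{\eps'}$ gives a lower bound of $\Omega(n^{\eps'})$ with probability $1-\exp(-n^{c})$. I expect the bookkeeping of the $\sqrt2$ rescaling and the choice of $\delta'$ to be routine, and the factorial-moment decoupling to be the main point to check carefully.
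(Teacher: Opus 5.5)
Your factorial-moment upper bound is sound, and it is a legitimate alternative to the paper's block-and-Chernoff argument. For a fixed $q$-tuple $S$, the cross sums $\sum_{j\notin S}a_{i_\ell j}$ are independent, the within-tuple remainder is $O(q\log n/\sqrt n)$ off a superpolynomially small event, and Markov's inequality with $q=q(A)$ gives the first claim. The genuine gap is in the lower bound. With the half split, $\hat L_{ii}=\sum_{j\in I_2}a_{ij}$ has variance $1/2$, so Lemma~\ref{lem:petrov's lemma} gives
\[
\P(\hat L_{ii}\ge t)=(1+o(1))\bigl(1-\Phi(\sqrt2\,t)\bigr)=n^{-(2-\delta')+o(1)},
\]
not $n^{-(1-\delta'/2)+o(1)}$. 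The $\sqrt2$ doubles the exponent; it is not a bookkeeping constant that a small change in $\delta'$ absorbs. Your binomial over $\Theta(n)$ indices therefore has mean $n^{\delta'-1+o(1)}\to0$ for every $\delta'<1$. The loss is structural. The event $L_{ii}\ge t$ typically occurs with both halves contributing about $t/2$. By asking only that $R_i\gtrsim 0$, you force $\hat L_{ii}$ to reach $t$ alone, which costs a factor $n^{-(1-\delta'/2)}$. Read literally, the statement only asks for some $\delta'>0$, so $\delta'\in(1+\eps',2)$ would survive. That contradicts your ``still small,'' though, and it is useless where the paper actually uses the lower bound: to get $D_{(1)}\ge\sqrt{(2-\delta)\log n}$ with overwhelming probability for the small $\delta<2/9$ of the spectral domain. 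Separately, a second-moment bound on $|\{i:R_i\ge-\log^{-1}n\}|$ only gives failure probability $O(1/n)$, not overwhelming probability.

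The fix, which is the paper's proof, is to let the independent part carry almost all of the variance. Partition $[n]$ into $n^{\kappa}$ blocks $B$ of size about $n^{1-\kappa}$, with $\kappa<\delta/2$ (the paper takes $\kappa=\delta/10$). For $i\in B$, the sums $X_i=\sum_{j\notin B}a_{ij}$ are independent across $i\in B$ and have variance $1-O(n^{-\kappa})$. The remainder $R_i=\sum_{j\in B}a_{ij}$ has $\|R_i\|_{\psi_2}=O(n^{-\kappa/2})$, so $\max_{i\in B}|R_i|\le n^{-\kappa/3}$ with overwhelming probability. Since $t\,n^{-\kappa/3}\to0$, this threshold shift is harmless. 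Sandwich the block count between the independent sums $\sum_{i\in B}\mathbf{1}\{X_i\ge t\pm n^{-\kappa/3}\}$. Petrov then gives block means $\Theta(n^{\delta/2-\kappa}/\sqrt{\log n})\to\infty$, Chernoff gives two-sided concentration with error $\exp(-n^{c})$, and a union bound over blocks yields both the upper and the lower bound for every small $\delta$. The same sandwich gives \eqref{eq:weak count}, where the block mean is $n^{1-\kappa-o(1)}$. If you instead keep factorial moments for \eqref{eq:weak count}, note that fixed $q$ only gives failure probability $\exp(-cq(\log n)^{1-2\eps''})$, which is not $O(n^{-A})$. You need $q\asymp A(\log n)^{2\eps''}$. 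Your bounds tolerate this, since with the sub-Gaussian tail everything is uniform in $q$ and $q\log n=o(\log^2 n)$.
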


	\subsubsection{Proof of Proposition \ref{prop:eigen count}}
	
		We will divide $[n]$ into $m := n^{\delta/10}$ consecutive blocks all of size roughly $n^{1 - \delta/10}.$ More precisely define the partition
		\[
		[n] = B_1 \sqcup \cdots \sqcup B_m
		\]
		where $s_i := |B_i|$ and $s_i = (1+o(1)) n^{1 - \delta/10}$.  For notational convenience, we work in $B_1$ for most of the proof. For $i \in B_1$, let
		\[
		L_{ii} = \sum_{j \notin B_1} a_{ij} +  \sum_{j \in B_1} a_{ij} := X_i + R_i
		\] 
		Most of the random variables are in the first sum by our choice of $m$.  
		The point of this block division is that $X_i$ are independent for $i \in B_1$.  
		
		The next lemma shows that the $R_i$'s cannot affect the count of large $L_{ii}$ by very much.
		\begin{lemma}\label{lem:neglible part in counting}
			Let $\tau_n := n^{-\delta/30}$.
			Then, with overwhelming probability,
			\[
			\max_{i \in B_1} |R_i| \leq \tau_n
			\]
		\end{lemma}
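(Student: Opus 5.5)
Since each $R_i=\sum_{j\in B_1} a_{ij}$ (with $a_{ii}=0$) is a sum of $s_1-1$ independent copies of $\xi/\sqrt n$, the plan is to combine a sub-Gaussian tail bound for each fixed $i$ with a union bound over $i\in B_1$. The variance of $R_i$ is $(s_1-1)/n=(1+o(1))n^{-\delta/10}$, so its standard deviation is about $n^{-\delta/20}$. This is much larger than $\tau_n=n^{-\delta/30}$, which leaves room for a polynomial gain in the tail.

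First fix $i\in B_1$ and apply Lemma \ref{lem:largedev} with $X_j=\xi_{ij}$ (the unscaled entries, with $\|\xi_{ij}\|_{\psi_2}= K:=\|\xi\|_{\psi_2}$), coefficient vector $a_j=n^{-1/2}\mathbf 1_{j\in B_1\setminus\{i\}}$, and $t=\tau_n$. Then $\|a\|_2^2=(s_1-1)/n\le 2n^{-\delta/10}$ for large $n$, and the lemma gives
\[
\P(|R_i|\ge \tau_n)\le 2\exp\left(-\frac{c\,n^{-\delta/15}}{2K^2 n^{-\delta/10}}\right)=2\exp\left(-c' n^{\delta/30}\right).
\]

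Next take a union bound over the $s_1\le n$ indices in $B_1$:
\[
\P\Big(\max_{i\in B_1}|R_i|>\tau_n\Big)\le 2n\exp(-c'n^{\delta/30}).
\]
This is $O(n^{-D})$ for every fixed $D>0$, which is exactly the overwhelming-probability claim. The same argument applies verbatim to every block $B_k$, and a further union bound over the $m=n^{\delta/10}$ blocks costs nothing.

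There is essentially no obstacle here. The only points to check are the exponent bookkeeping, namely that $\tau_n^2/\|a\|_2^2=n^{-\delta/15+\delta/10}=n^{\delta/30}$ is a positive power of $n$, and the fact that $R_i$ is a sum of independent mean-zero sub-Gaussian variables. Both hold because the diagonal entries are set to zero and the off-diagonal entries of a fixed row are independent.
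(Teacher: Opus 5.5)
Your proof is correct and is essentially the paper's argument: a Hoeffding-type sub-Gaussian tail bound for each $R_i$ with $\|a\|_2^2\approx s_1/n\approx n^{-\delta/10}$, giving $2\exp(-c\,n^{\delta/30})$, followed by a union bound over $i\in B_1$. One wording slip: the standard deviation $n^{-\delta/20}$ is much \emph{smaller} than $\tau_n=n^{-\delta/30}$, not larger, though your exponent computation already uses the correct comparison.
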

		\begin{proof}
			By \cite[Proposition 2.6.1]{Vershynin2018}, 
			\[
			\|R_i\|_{\psi_2} \leq C \sqrt{\frac{s_1}{n}} = O(n^{-\delta/20})
			\]
			so
			\[
			\P(|R_i| > \tau_n) \leq 2 \exp\left(-c n \tau_n^2/ s_1 \right) \leq n^{-K}
			\]
			for any $K$. Taking a union bound over $i\in B_1$ completes the proof.
		\end{proof}
		
		Now observe that on the event described in the above lemma,
		\[
		\mathrm{1}_{X_i \geq \sqrt{(2 - \delta) \log n} + \tau_n} \leq \mathrm{1}_{L_{ii} \geq \sqrt{(2 - \delta) \log n}} \leq \mathrm{1}_{X_i \geq \sqrt{(2-\delta) \log n} - \tau_n}
		\]
		so if we define
		\[
		N^{-} := \sum_{i \in B_1}\mathrm{1}_{X_i \geq \sqrt{(2 - \delta) \log n} + \tau_n}  
		\]
		and 
		\[
		N^{+} :=  \sum_{i \in B_1}\mathrm{1}_{X_i \geq \sqrt{(2 - \delta) \log n} - \tau_n}  
		\]
		then on the event in Lemma \ref{lem:neglible part in counting}
		\[
		N^- \leq N(\delta) \leq N^{+}
		\]
		where \begin{equation*}
			N(\delta)=\sum_{i\in B_1}  \mathrm{1}_{L_{ii} \geq \sqrt{(2 - \delta) \log n}}.
		\end{equation*} 
		
		Observe that $X_i$ has variance $\sigma_i^2 := (n-s_1)/n = 1 + O(n^{-\delta/30})$. 
		Now define
		\[
		p^{-} := \P(X_i \geq \sqrt{(2 - \delta) \log n} + \tau_n) \text{ and } p^+ := \P(X_i \geq \sqrt{(2 - \delta) \log n } - \tau_n).
		\] 
		Applying  Lemma \ref{lem:petrov's lemma} we deduce that
		\[
		p^{-} = (1 +o(1)) \left(1 - \Phi\left(\frac{\sqrt{(2 - \delta) \log n} + \tau_n}{\sigma_i} \right)\right).
		\]
		  Plugging into the Gaussian density gives
		\[
		p^{-} = (1 + o(1)) \frac{n^{-1 + \delta/2}}{\sqrt{2 \pi (2- \delta) \log n}}.
		\]

		Note that  $\E N^{\pm}= \Theta(n^{2\delta/5}/\sqrt{\log n})$.
		By Chernoff's bound,
		\[
		\P(|N^- - \E N^{-}| > t n^{2\delta/5}/\sqrt{\log n}) \leq 2 \exp(-c t^2 n^{2\delta/5}/\sqrt{\log n})
		\]
		Setting $t = n^{\delta/50}$, say, gives a superpolynomial bound on the right.  Therefore, with overwhelming probability,
		\[
		N^{-} = \Theta(n^{2\delta/5}/\sqrt{\log n}).
		\]
	 An identical argument implies $N^{+} = \Theta(n^{2\delta/5}/\sqrt{\log n})$.  Finally, take a union bound over $m$ and choose $\eps,\eps'$ judiciously to complete the proof of the first two claims. For the final claim, \eqref{eq:weak count}, the only adjustment to the above is to note that with all the random variables defined in analogous ways (using the same $\delta>0$ for the partition)\begin{equation*}
	 	p^{\pm}=(1+o(1))\frac{n^{-(2\log n)^{-\eps''/2}}}{\sqrt{2\pi}(2\log n)^{\frac{1}{2}-\eps''}},
	 \end{equation*} and hence the expected value which $N^{\pm}$ concentrates around needs to be adjusted. After this the proof is identical.

	\section{Largest entries of $D$}  \label{sec:largest-entries}
	In this section we prove the following technical result which will be necessary to prove our main results. 
	
	\begin{proposition}\label{prop:convergence of diagonal process}
		 The point process 
		\[
		\mathcal{P}_n  = \sum_{i=1}^n \delta_{a_n ( L_{ii} - b'_n) }
		\]
		converges to a Poisson point process with intensity measure $\mu(\dd x) = e^{-x} \dd x$.
	\end{proposition}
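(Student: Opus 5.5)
We prove convergence of $\mathcal{P}_n$ with Kallenberg's criterion. Since the limiting Poisson process is simple and has a diffuse intensity, it suffices to check two things. Write $\mathcal{I}$ for the class of finite unions of disjoint intervals $I=\bigcup_{r=1}^{q}(x_r,y_r]$ inside a compact set bounded from below. Then we need
(i) $\E\mathcal{P}_n(I)\to\mu(I)$, and
(ii) $\P(\mathcal{P}_n(I)=0)\to e^{-\mu(I)}$.

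Equivalently, we can use the method of factorial moments. For every fixed $k$ we show
\[
\E\bigl[\mathcal{P}_n(I)(\mathcal{P}_n(I)-1)\cdots(\mathcal{P}_n(I)-k+1)\bigr]\to\mu(I)^k .
\]
This reduces to controlling, for distinct indices $i_1,\dots,i_k$,
\[
\P\bigl(L_{i_1i_1}\in u_n(J_1),\dots,L_{i_ki_k}\in u_n(J_k)\bigr),
\]
where $u_n(x)=b_n'+x/a_n$ as in Lemma~\ref{lem:Gaussian tail max idk}. The goal is to show this probability equals $(1+o(1))\prod_{l}\P(Z\in u_n(J_l))$, uniformly over distinct tuples. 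Summing over the $n^{\underline{k}}$ tuples and applying Lemma~\ref{lem:Gaussian tail max idk} then gives $\mu(I)^k$.

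\textbf{Key step: decoupling.} For a fixed tuple $S=\{i_1,\dots,i_k\}$, write
\[
L_{i_li_l}=X_{i_l}+R_{i_l},\qquad X_{i_l}=\sum_{j\notin S}a_{i_lj},\qquad R_{i_l}=\sum_{j\in S\setminus\{i_l\}}a_{i_lj}.
\]
The $X_{i_l}$ are independent, because they use disjoint sets of entries. Each $R_{i_l}$ is a sum of at most $k-1$ entries of size $O(n^{-1/2})$ times sub-Gaussian factors. Hence with probability $1-O(n^{-D})$ we have $|R_{i_l}|\le n^{-1/2+\epsilon}$, and $a_n n^{-1/2+\epsilon}\to0$.

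On this event, the joint event for the $L$'s is sandwiched between the corresponding events for the $X$'s, with the interval endpoints shifted by $\pm n^{-1/2+\epsilon}$. This is exactly the perturbation $d_n$ permitted in the second part of Lemma~\ref{lem:Gaussian tail max idk}. By independence, the $X$-probability factorizes into $\prod_l\P(X_{i_l}\in u_n(J_l)\pm d_n)$. Each $X_{i_l}$ is $(n-k)^{1/2}/n^{1/2}$ times a normalized iid sum. Lemma~\ref{lem:petrov's lemma} applies because sub-Gaussian implies finite exponential moments, and the points lie at $t=O(\sqrt{\log n})$. It gives
\[
\P\bigl(X_{i_l}>u_n(x)\bigr)=(1+o(1))\bigl(1-\Phi(u_n(x)\sqrt{n/(n-k)})\bigr).
\]
The variance rescaling changes the argument by $O(\sqrt{\log n}/n)$, which is again an admissible $d_n$.

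\textbf{Main obstacle: the exceptional event.} Interval probabilities are differences of tails, each of order $1/n$. For the factored product to be of order $n^{-k}$ with relative error $o(1)$, we need differences of tails, not just ratios. This follows from Lemma~\ref{lem:petrov's lemma} together with the uniform statement of Lemma~\ref{lem:Gaussian tail max idk}: both $n(1-\Phi)$ terms converge to $e^{-x}$ uniformly on compacts, so each difference times $n$ converges to $\mu(J_l)$.

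The probability of the complementary event $|R_{i_l}|>n^{-1/2+\epsilon}$ is superpolynomially small. It is therefore negligible compared to $n^{-k}$ once we choose the tail exponent $D>k$ and take a union bound over the $n^k$ tuples.

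Finally, with all factorial moments converging to those of a Poisson$(\mu(I))$ variable, $\mathcal{P}_n(I)$ converges to Poisson$(\mu(I))$. In particular (i) and (ii) hold, and Kallenberg's theorem yields the Poisson point process limit.
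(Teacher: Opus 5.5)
Your proposal is correct and follows essentially the same route as the paper: you split each row sum in a fixed tuple into an independent part over indices outside the tuple plus a sum of at most $k-1$ entries, which is $o(a_n^{-1})$ off an event of probability $o(n^{-k})$. You then factorize using Lemma~\ref{lem:petrov's lemma} and Lemma~\ref{lem:Gaussian tail max idk}, and conclude via factorial moments. The differences are cosmetic: the paper conditions on the small part and gets interval probabilities from joint tail probabilities by inclusion--exclusion, and it uses joint factorial moments over disjoint intervals, whereas you sandwich interval events directly and invoke Kallenberg's criterion.
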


	 Let $\{X_{ij}\}$ be the triangular array such that $a_{ij}=\frac{1}{\sqrt{n}}X_{ij}$, let $Y_i = \frac{1}{\sqrt{n}} \sum_{j \neq i} X_{ij}=L_{ii}$, and let $u_n(x) = b'_n + \frac{x}{a_n}$.  Thus, 
	\[
	\P(Y_i > u_n(x)) = \P(a_n ( L_{ii} - b'_n) > x)
	\] The conclusion of Proposition \ref{prop:convergence of diagonal process} in fact holds even if we assume the entries of the Wigner matrix $A_n$ are only \emph{sub-exponential}, and we will complete the proof only under this assumption. Before beginning the proof, we state a corollary, which is actually equivalent to Proposition \ref{prop:convergence of diagonal process}, on the order statistics of the diagonal. 
	
	\begin{corollary}\label{cor:diag order}
		Let $L_{(1)}\geq L_{(2)}\geq\cdots\geq L_{(n)}$ be the order statistics of $\{L_{11},\dots,L_{nn}\}$, and let $k$ be a fixed positive integer. Then, \begin{equation}
			\lim\limits_{n\rightarrow\infty}\P\left(a_n(L_{(1)}-b_n')\leq x_1,\dots, a_n(L_{(k)}-b_n')\leq x_k \right)=F_{k}(x_1,\dots, x_k),
		\end{equation} where $F_k$ is defined in \eqref{eq:Gumbel limit stats joint}.
	\end{corollary}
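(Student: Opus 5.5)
The plan is to deduce Corollary \ref{cor:diag order} directly from Proposition \ref{prop:convergence of diagonal process}, using the standard correspondence between point process convergence and convergence of the top order statistics. The key object is $\mathcal{P}_n$ restricted to sets of the form $(x,\infty)$. The event $\{a_n(L_{(1)}-b_n')\le x_1,\dots,a_n(L_{(k)}-b_n')\le x_k\}$ can be written in terms of counts. Assume without loss of generality that $x_1\ge x_2\ge\cdots\ge x_k$; otherwise replace each $x_j$ by $\min_{i\le j}x_i$, which gives the same event since the order statistics are decreasing. The event then says exactly that
\[
\mathcal{P}_n\big((x_j,\infty)\big)\le j-1 \quad\text{for each } j=1,\dots,k .
\]
Writing $I_j=(x_j,x_{j-1}]$ with $x_0=\infty$, this is a condition on the joint counts $\big(\mathcal{P}_n(I_1),\dots,\mathcal{P}_n(I_k)\big)$.

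First, I would note that the sets $(x,\infty)$ are not relatively compact in $\R$. So vague convergence on $\R$ alone does not immediately give convergence of these counts; I would work on the state space $(-\infty,\infty]$ with the usual topology, where $(x,\infty]$ is relatively compact. Second, I need convergence in distribution of the finite-dimensional vector of counts $\big(\mathcal{P}_n(I_1),\dots,\mathcal{P}_n(I_k)\big)$ to the corresponding vector for the Poisson process $\mathcal{P}$ with intensity $e^{-x}\,\dd x$. This follows from the point process convergence, because each boundary $\{x_j\}$ (and $\{\infty\}$) has $\mu$-measure zero, hence is a.s. not charged by $\mathcal{P}$. Third, since the event above is a finite union of cylinder sets in these integer-valued counts, the probabilities converge to
\[
\P\big(\mathcal{P}((x_j,\infty))\le j-1,\ j\le k\big).
\]
This limit depends only on $\mu$, so it does not depend on the underlying model.

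To identify the limit with $F_k$, I would apply the same argument to iid Gaussians $Z^{(n)}_j$. By Lemma \ref{lem:Gaussian tail max idk}, $n(1-\Phi(u_n(x)))\to e^{-x}$ uniformly on compacts. The classical binomial-to-Poisson limit (or Kallenberg's criterion) then gives $\sum_j\delta_{a_n(Z_j-b_n')}\to\mathcal{P}$, so the Gaussian order statistics have the same limiting probability, namely $F_k(x_1,\dots,x_k)$. In particular, the limit defining $F_k$ in \eqref{eq:Gumbel limit stats joint} exists, and the corollary follows.

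The main obstacle is the tail issue in the first step: ruling out mass escaping to $+\infty$, since vague convergence on $\R$ only controls compact sets. If Proposition \ref{prop:convergence of diagonal process} is understood on $(-\infty,\infty]$ (as its proof via Kallenberg's theorem, with $\E\mathcal{P}_n((x,\infty))\to e^{-x}$, will provide), there is nothing more to do. Otherwise I would add a tightness bound $\P(\mathcal{P}_n((M,\infty))\ge1)\le n\,\P(L_{11}>u_n(M))\to e^{-M}$, using Lemma \ref{lem:petrov's lemma} together with Lemma \ref{lem:Gaussian tail max idk}. Letting $M\to\infty$ then makes the truncation to $(x_j,M]$ harmless.
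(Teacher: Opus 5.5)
Your proposal is correct and is essentially the argument the paper has in mind. The paper gives no separate proof, calling the corollary equivalent to Proposition \ref{prop:convergence of diagonal process}; the intended deduction is your rewriting of the event as $\{\mathcal{P}_n((x_j,\infty))\le j-1,\ 1\le j\le k\}$, followed by joint Poisson convergence of the counts, with the Gaussian case identifying the limit as $F_k$. Your tightness fallback is valid but not needed, because the paper's factorial-moment proof (Lemma \ref{lem:counting prob of diag} via inclusion--exclusion) goes through verbatim when $\beta_p=\infty$ and so gives Poisson convergence of the counts of $(x,\infty)$ directly.
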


	\begin{lemma}\label{lem:One row sum converges}
		For every compact set $K \subset \R$,
		\[
		\sup_{x \in K} |n \P(Y_1 > u_n(x)) - e^{-x}| \rightarrow 0
		\]
	\end{lemma}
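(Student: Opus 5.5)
The statement is a moderate-deviation computation for a single row sum. We combine Lemma \ref{lem:petrov's lemma} with the Gaussian calculation in Lemma \ref{lem:Gaussian tail max idk}.

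Here $Y_1 = \frac{1}{\sqrt{n}}\sum_{j\neq 1} X_{1j}$ is a normalized sum of $n-1$ iid copies of $\xi$. These have mean zero, unit variance, and $\E e^{\theta|\xi|}<\infty$ for some $\theta>0$; this holds since $\xi$ is sub-Gaussian, and even under the weaker sub-exponential assumption we are working with. Write $S_{n-1}=\sum_{j\neq 1}X_{1j}$, so that
\[
Y_1 = \sqrt{\tfrac{n-1}{n}}\,\frac{S_{n-1}}{\sqrt{n-1}}.
\]
Then
\[
\P(Y_1>u_n(x)) = \P\Big(\tfrac{S_{n-1}}{\sqrt{n-1}} > \sqrt{\tfrac{n}{n-1}}\,u_n(x)\Big).
\]
For $x$ in the compact set $K$ we have $u_n(x) = \sqrt{2\log n}+O(\log\log n/\sqrt{\log n})$. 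Hence the threshold $t_n(x):=\sqrt{n/(n-1)}\,u_n(x)$ lies in $[0, C\sqrt{\log (n-1)}]$ for, say, $C=2$ and all large $n$, uniformly in $x\in K$. Lemma \ref{lem:petrov's lemma}, applied with $n-1$ in place of $n$, gives
\[
\P(Y_1>u_n(x)) = (1+o(1))\big(1-\Phi(t_n(x))\big)
\]
with the $o(1)$ uniform over $x\in K$, because the supremum in that lemma is over the whole range of $t$.

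Next write $t_n(x) = u_n(x)+d_n(x)$ with
\[
d_n(x) = \Big(\sqrt{\tfrac{n}{n-1}}-1\Big)u_n(x) = O\Big(\tfrac{\sqrt{\log n}}{n}\Big).
\]
Then $\sup_{x\in K} a_n|d_n(x)| = O(\log n/n)\to 0$. The second part of Lemma \ref{lem:Gaussian tail max idk} therefore yields
\[
\sup_{x\in K}\big|n(1-\Phi(t_n(x)))-e^{-x}\big|\to 0.
\]
Multiplying by the uniform factor $1+o(1)$ and using that $e^{-x}$ is bounded on $K$ gives
\[
\sup_{x\in K}|n\P(Y_1>u_n(x))-e^{-x}|\to 0.
\]

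The only delicate point is the applicability and uniformity of Lemma \ref{lem:petrov's lemma}. First, the relative error it provides must be uniform in $t$ up to order $\sqrt{\log n}$; this is exactly what the relative-error (Cramér-type) form guarantees. Second, its hypothesis is finiteness of an exponential moment, which holds under our assumptions. Third, the lemma concerns a single fixed sequence of iid variables, whereas we have a triangular array. This causes no difficulty, because only the distribution of $S_{n-1}$ matters and the entries $X_{1j}$ are iid copies of $\xi$ at every level. Everything else is the bookkeeping above.
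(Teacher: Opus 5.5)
Your proposal is correct and takes the same route as the paper, which just says the lemma follows from Lemma~\ref{lem:petrov's lemma} and Lemma~\ref{lem:Gaussian tail max idk}. Your write-up supplies the details of that argument, including absorbing the $\sqrt{n/(n-1)}$ rescaling into the perturbation $d_n$ allowed by the second part of Lemma~\ref{lem:Gaussian tail max idk}.
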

	\begin{proof}
		The proof follows in a straightforward way from Lemmas \ref{lem:petrov's lemma} and \ref{lem:Gaussian tail max idk}. 
	\end{proof}
	
	Next, we establish that that the row sums are asymptotically independent in a strong enough sense.
	\begin{proposition}\label{prop:asympt ind of row sums}
		Fix $r \geq 1$ and $x_1, \dots, x_r \in \R$.  Then,
		\[
		\P(Y_1 > u_n(x_1), \dots, Y_r > u_n(x_r)) = \prod_{p = 1}^r \P(Y_1 > u_n(x_p)) (1 + o(1)).
		\]
		Moreover,
		\[
		n^r \P(Y_1 > u_n(x_1), \dots, Y_r > u_n(x_r)) \rightarrow e^{-(x_1 + \cdots + x_r)},
		\] uniformly on compact subsets of $\R^r$. 
	\end{proposition}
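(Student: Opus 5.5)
Since $Y_1,\dots,Y_r$ share only the $\binom{r}{2}$ entries $X_{pq}$ with $p,q\le r$, I would decouple them by splitting off these shared entries. For $p\in[r]$ write
\[
Y_p=\tilde Y_p+W_p,\qquad \tilde Y_p:=\frac{1}{\sqrt n}\sum_{j\notin[r]}X_{pj},\qquad W_p:=\frac{1}{\sqrt n}\sum_{q\in[r],\,q\neq p}X_{pq}.
\]
The variables $\tilde Y_1,\dots,\tilde Y_r$ are independent, each a normalized sum of $n-r$ iid copies of $\xi$, with variance $(n-r)/n=1-O(1/n)$. The $W_p$ are sums of at most $r-1$ sub-exponential variables divided by $\sqrt n$, so $|W_p|\le \tau_n:=n^{-1/2}\log^2 n$ fails with probability at most $\exp(-c\log^2 n)$, which is superpolynomially small. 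In particular this failure probability is $o(n^{-r})$, and it is also negligible against the target $\prod_p\P(Y_1>u_n(x_p))\asymp n^{-r}$.

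On the good event $\max_p|W_p|\le\tau_n$ we have the sandwich
\[
\prod_{p=1}^r \P(\tilde Y_p>u_n(x_p)+\tau_n)-o(n^{-r})\le \P\Big(\bigcap_{p\le r}\{Y_p>u_n(x_p)\}\Big)\le \prod_{p=1}^r\P(\tilde Y_p>u_n(x_p)-\tau_n)+o(n^{-r}),
\]
where independence of the $\tilde Y_p$ has been used to factor the probabilities. Next I would identify each factor. I would apply Lemma \ref{lem:petrov's lemma} to $\tilde Y_p\sqrt{n/(n-r)}$, a normalized sum of $n-r$ iid terms, with $t=(u_n(x_p)\pm\tau_n)\sqrt{n/(n-r)}\le C\sqrt{\log n}$. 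This gives
\[
\P(\tilde Y_p>u_n(x_p)\pm\tau_n)=(1+o(1))\bigl(1-\Phi(u_n(x_p)+d_n)\bigr),
\]
where $d_n=\pm\tau_n+O(u_n/n)$. Since $a_n|d_n|\to0$ uniformly for $x_p$ in a compact set, the second part of Lemma \ref{lem:Gaussian tail max idk} yields that $n$ times this probability tends to $e^{-x_p}$ uniformly on compacts. Lemma \ref{lem:One row sum converges} gives the same limit for $n\P(Y_1>u_n(x_p))$. Multiplying the $r$ factors and using the sandwich, both bounds equal $n^{-r}e^{-(x_1+\cdots+x_r)}(1+o(1))$. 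This proves both claims simultaneously, uniformly on compact subsets of $\R^r$, because every $o(1)$ above is uniform there.

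The main obstacle is bookkeeping rather than a deep difficulty. We must check that the Petrov moderate-deviation ratio applies uniformly when the sample size is $n-r$ rather than $n$, which is harmless since $C$ in Lemma \ref{lem:petrov's lemma} is arbitrary. We must also check that the exceptional-event probability really is $o(n^{-r})$. For the latter, the sub-exponential tail bound $\P(|X|>s)\le 2e^{-cs}$ applied with $s=\log^2 n/r$ suffices, so only the sub-exponential assumption is needed, consistent with the remark preceding Corollary \ref{cor:diag order}. The Petrov hypothesis $\E e^{\theta|\xi|}<\infty$ is likewise implied by sub-exponentiality.
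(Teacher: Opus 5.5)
Your proof is correct and follows essentially the same route as the paper. Both arguments split $Y_p$ into an off-block sum over $j\notin[r]$, with these sums independent across $p$, plus the contribution of the $\binom{r}{2}$ shared entries. Both then truncate the shared entries on an event whose complement has probability $o(n^{-r})$, and identify each factor via Lemmas \ref{lem:petrov's lemma} and \ref{lem:Gaussian tail max idk}.

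The only difference is cosmetic. You use a monotone sandwich with the deterministic shift $\tau_n$. The paper instead conditions on $V_p$ and uses that $q_p(y)/\P(Y_1>u_n(x_p))\to 1$ uniformly over $|y|\le (r-1)K\log n/\sqrt{n}$.
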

	
	\begin{proof}
		The case $r = 1$ is the content of Lemma \ref{lem:One row sum converges}.  Let $r \geq 2$ and write
		\[
		Y_p = U_p + V_p
		\]
		where 
		\[
		U_p = \frac{1}{\sqrt{n}} \sum_{j = r+1}^{n} X_{pj},
		\]
		and
		\[
		V_p = \frac{1}{\sqrt{n}} \sum_{q \leq r} X_{pq}.
		\]
		Note that the $U_1,U_2,\dots,U_r$ are independent.  Additionally, the collections $\{U_p\}$ and $\{V_p\}$ are independent.  Clearly, each $U_p$ has the same distribution as
		\[
		\sqrt{\frac{n-r}{n}} \frac{S_{n-r}}{\sqrt{n-r}},
		\] where $S_k$ is the sum of $n$ independent copies of $X_{12}$. 
		Now let $\theta>0$ be a constant such that \begin{equation}
			\E e^{\theta|X_{12}|}<\infty,
		\end{equation} and choose $K$ large enough so that $\theta K > r + 1$.  Let 
		\[
		\mathcal{E}_n := \left\{\max_{1 \leq p < q \leq r} |X_{pq}| \leq K \log n\right\}.
		\]
		By our assumption on the random variables, for $t \geq 0$,
		\[
		\P(|X| > t) \leq  C e^{-\theta t}.
		\]
		By a union bound,
		\[
		\P(\mathcal{E}_n^c) \leq \binom{r}{2} C n^{-\theta K} = o(n^{-r}).
		\]
		On the event $\mathcal{E}_n$,
		\[
		\max_{1 \leq p \leq r} |V_p| \leq \frac{(r-1) K \log n}{\sqrt{n}},
		\]
		and so
		\[
		a_n \max_p |V_{p}| \rightarrow 0.  
		\]
		For $y \in \R$ such that $|y| \leq (r-1) K \log n / \sqrt{n}$, let 
		\[
		q_p(y) = \P(U_p > u_n(x_p) - y).
		\]
		Since,
		\[
		\frac{u_n(x_p) - y}{\sqrt{(n-r)/n}} = u_n(x_p) + O(\log n/\sqrt{n}) = u_n(x_p) + o(a_n^{-1}),
		\]
		Lemmas \ref{lem:Gaussian tail max idk} and \ref{lem:One row sum converges} imply  that 
		\[
		q_p(y) = \frac{e^{-x_p} + o(1)}{n},
		\]uniformly in $y$ on any compact set. 
		By the Corollary,
		\begin{equation} \label{estimate}
			\sup_{|y| \leq (r-1) K \log n / \sqrt{n}} \left| \frac{q_p(y)}{\P(Y_1 > u_n(x_p)) } -1 \right| \rightarrow 0.  
		\end{equation}
		Now, if we condition on $V_p$, by the independence of $U$ from $V$,
		\[
		\P(Y_1 > u_n(x_1), \dots, Y_r > u_n(x_r) \cap \mathcal{E}_n) = \E \left[ \mathrm{1}_{\mathcal{E}_n} \prod_{p=1}^r q_p(V_p) \right].
		\]
		On $\mathcal{E}_n$, by \eqref{estimate}, 
		\[
		\E \left[ \mathrm{1}_{\mathcal{E}_n} \prod_{p=1}^r q_p(V_p) \right] = \prod_{p=1}^r \P(Y_1 > u_n(x_p)) (1 + o(1)).  
		\]
		Now, since $\P(\mathcal{E}_n^c) = o(n^{-r})$ and $\prod_{p=1}^r \P(Y_1 > u_n(x_p)) = \Theta( n^{-r})$, the result is proved.
	\end{proof}
	
	\begin{lemma}\label{lem:counting prob of diag}
		For $1 \leq p \leq r$,
		let $I_p = (\alpha_p, \beta_p]$.  Set $T_i = a_n(Y_i - b'_n) $
		and
		\[
		\mu(I_p) = e^{-\alpha_p} - e^{-\beta_p}.  
		\]
		Then,
		\[
		\P(T_1 \in I_1, \dots, T_r \in I_r) = n^{-r} \prod_{p=1}^r  \mu(I_p) + o(n^{-r})
		\]
	\end{lemma}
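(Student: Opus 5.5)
Since $\{T_p \in I_p\} = \{T_p > \alpha_p\}\setminus\{T_p>\beta_p\}$, we reduce to tail events by inclusion--exclusion. Note $\{T_p > x\} = \{Y_p > u_n(x)\}$. Then
\[
\mathbf{1}_{\{T_1\in I_1,\dots,T_r\in I_r\}} = \prod_{p=1}^r \bigl(\mathbf{1}_{\{Y_p>u_n(\alpha_p)\}} - \mathbf{1}_{\{Y_p>u_n(\beta_p)\}}\bigr).
\]
Expanding the product gives $2^r$ terms indexed by choices $\gamma_p\in\{\alpha_p,\beta_p\}$, each with sign $(-1)^{\#\{p:\gamma_p=\beta_p\}}$. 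Taking expectations,
\[
\P(T_1 \in I_1, \dots, T_r \in I_r)=\sum_{\gamma}(-1)^{\#\{p:\gamma_p=\beta_p\}}\,\P\bigl(Y_1>u_n(\gamma_1),\dots,Y_r>u_n(\gamma_r)\bigr).
\]

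Next, apply the second statement of Proposition \ref{prop:asympt ind of row sums} to each of the finitely many terms, at the points $(\gamma_1,\dots,\gamma_r)$. The proposition is stated for $Y_1,\dots,Y_r$ at arbitrary thresholds $x_1,\dots,x_r$, which is exactly the form needed here, so no permutation of indices is required. It gives
\[
\P\bigl(Y_1>u_n(\gamma_1),\dots,Y_r>u_n(\gamma_r)\bigr)=n^{-r}e^{-(\gamma_1+\cdots+\gamma_r)}+o(n^{-r}).
\]
Summing the finitely many terms with their signs, the main terms factor:
\[
\sum_\gamma (-1)^{\#\{p:\gamma_p=\beta_p\}}\prod_p e^{-\gamma_p}=\prod_{p=1}^r\bigl(e^{-\alpha_p}-e^{-\beta_p}\bigr)=\prod_{p=1}^r\mu(I_p).
\]
Since there are only $2^r$ error terms, each $o(n^{-r})$, their total is still $o(n^{-r})$. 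This proves the claim.

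The only delicate point is to use the absolute asymptotic $n^r\P(\cdots)\to e^{-\sum x_p}$, rather than the multiplicative $(1+o(1))$ form. The alternating signs in the inclusion--exclusion could make relative errors problematic if the main terms nearly cancelled, but the additive $o(n^{-r})$ errors avoid this. If some $\beta_p=+\infty$ is allowed, the corresponding term is simply absent, since $\{Y_p > u_n(\infty)\}$ is empty and $e^{-\infty}=0$. If some $\alpha_p=-\infty$ were allowed, Proposition \ref{prop:asympt ind of row sums} would not apply directly, and one would need a separate truncation; I assume here that the intervals are bounded.
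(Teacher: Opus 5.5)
Your proposal is correct and is essentially the paper's proof: expand $\prod_p\bigl(\mathbf{1}_{\{T_p>\alpha_p\}}-\mathbf{1}_{\{T_p>\beta_p\}}\bigr)$ by inclusion--exclusion, then apply the second statement of Proposition~\ref{prop:asympt ind of row sums} to each of the $2^r$ terms, so the main terms factor into $\prod_p\mu(I_p)$ with total error $o(n^{-r})$. Your concern about the multiplicative form is unnecessary but harmless: each term is $\Theta(n^{-r})$, so a relative $o(1)$ error is automatically an additive $o(n^{-r})$ error.
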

	\begin{proof}
		We use inclusion-exclusion. 
		\[
		\P(T_1 \in I_1, \dots, T_r \in I_r) = n^{-r} \sum_{B \subset \{1, \dots, r\}} (-1)^{|B|}\prod e^{-z_p(B)} + o(n^{-r}) = n^{-r} \prod (e^{-\alpha_p} - e^{-\beta_p}) + o(n^{-r})
		\] 
		where $z_p(B) = \beta_p$ if $p \in B$, otherwise, $z_p(B) = \alpha_p$.  
	\end{proof}
	\subsection{Proof of Proposition \ref{prop:convergence of diagonal process}}
	
	\begin{proof}
		Let $I_1,I_2,\dots, I_m$ be disjoint intervals of the form $I_i=(x_i,y_i]$. Let $N_{n,\ell} = \mathcal{P}_n(I_\ell)$.  Fix $k_1, \dots, k_m \in \mathbb{N}$ and $r = k_1 + \dots + k_m$.  For $r \geq 1$,
		
		\[
		\E \prod_{\ell=1}^m (N_{n,\ell})_{k_\ell} = (n)_r \P(T_1, \dots, T_{k_1} \in I_1, T_{ k_1 + 1}, \dots, T_{ k_1+k_2} \in I_2, \cdots, T_{k_{m-1}+1 },\dots, T_{k_m}\in I_{m}), 
		\]
		where $(z)_k = z(z-1) \cdots (z - k+1)$. Lemma \ref{lem:counting prob of diag} gives
		\[
		\E \prod_{\ell=1}^m (N_{n,\ell})_{k_\ell} \rightarrow \prod_{\ell=1}^m \mu(I_\ell)^{k_\ell}.  
		\]
		The joint law is determined by these moments, see \cite[Theorem 1.23]{Bollobas2001}. Thus
		$(N_{n,1}, \dots, N_{n,m})$ converges to a vector of independent Poisson random variables with means $e^{-x_i}-e^{-y_i}$. Convergence of these finite dimensional distributions implies convergence of the point process, completing the proof. 		
	\end{proof}

	\section{Proof of Theorem \ref{thm:largest k eigenvalues}} \label{sec:proof}
	We define the events 
	\[
	\mathcal{E}_1 = \left\{ \left| \{ 1 \leq i \leq n: D_{ii} \geq \sqrt{(2 - 2\delta) \log n} \} \right| \leq n^{\varepsilon} \right\} 
	\]
	\[
	\mathcal{E}_2 = \bigcap_{k} \left\{ \left| \{ 1 \leq i \leq n: \tilde{D}^{(k)}_{ii} \geq \sqrt{(2 - 2\delta) \log n} \} \right| \leq n^{\varepsilon}   \right\}
	\]
	\[
	\mathcal{E}_3 = \bigcap_{i,j \in [n]} \{ |a_{ij}| \leq n^{- 1/2 + \varepsilon}\}
	\]
	Next, we define	$\mathcal{E} := \mathcal{E}_1 \cap \mathcal{E}_2 \cap \mathcal{E}_3\cap\{\|A\|\leq 3 \}$, which holds with overwhelming probability for some choices of $\delta,\eps>0$ due to Proposition \ref{prop:eigen count}, the fact that the entries of $A$ are sub-Gaussian, and well established bounds on the operator norm of Wigner matrices \cite{MR4550215}.  Throughout the proof we will need to update the event $\mathcal{E}$ by taking the intersection with other events which holds with overwhelming probability. To avoid cumbersome notation, we will point out each update to $\mathcal{E}$ and simply denote this new event  again by $\mathcal{E}$.

			By the Schur Complement Formula (Lemma \ref{lem:schur}),
		\begin{align} \label{eq:schurexpansion}
			G_{kk}(z)  &= \frac{1}{ \sum_{i}^{(k)} a_{ik} - z- \sum_{i,j}^{(k)} a_{ki} G_{ij}^{(k)} a_{jk}} \nonumber \\ 
			&=  \frac{1}{ \sum_{i}^{(k)} a_{ik} - z- m_n(z) - r_k} 
		\end{align}
	where
	\[
	r_k = \sum_{ij} G_{ij}^{(k)} \left(a_{ki} a_{kj} - \delta_{ij}\frac{1}{n} \right) - (m_n(z) - m_n^{(k)}(z)).
	\]

	We prove initial estimates for $m_n$, $m_n^{(k)}$, $\tilde{m}$, and $r_k$ which will  later be improved.
	
	\begin{lemma}\label{lem:weak local law} On $\mathcal{E}$,
		\begin{equation}
			 \sup_{z\in S_{\delta}}|\tilde{m}_n^{(k)}(z)|=o(1),
		\end{equation} and   \begin{equation}
		 \sup_{z\in S_{\delta}}|\Im \tilde{m}_{n}^{(k)}(z)|=O\left(\frac{\eta}{\log n}\right),
		\end{equation} for any $k\in[n]$. Additionally, the same results hold for both $m_n$ and $m_n^{(k)}$.  
	\end{lemma}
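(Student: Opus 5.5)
We prove the bounds for $\tilde m_n^{(k)}$ first and then transfer them to $m_n$ and $m_n^{(k)}$. The argument is a deterministic consequence of the event $\mathcal{E}$: we split the spectrum of $\tilde L^{(k)}$ according to the diagonal entries, treat $\tilde L^{(k)}=\tilde D^{(k)}-A^{(k)}$ as a perturbation of its diagonal, and control the few large diagonal entries with the counting event $\mathcal{E}_2$.

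\emph{Step 1: eigenvalue counting.} Write $E=\Re z\ge\sqrt{(2-\delta)\log n}$. Let $J=\{i:\tilde D^{(k)}_{ii}\ge\sqrt{(2-2\delta)\log n}\}$, so $|J|\le n^{\eps}$ on $\mathcal{E}_2$. Consider the diagonal matrix $\tilde D'$ that agrees with $\tilde D^{(k)}$ off $J$ and has entries $0$ on $J$. Then $\tilde L^{(k)}$ differs from $\tilde D'-A^{(k)}$ by a perturbation of rank at most $|J|\le n^{\eps}$. Moreover $\|\tilde D'-A^{(k)}\|$ is at most $\sqrt{(2-2\delta)\log n}+3$ on $\mathcal{E}$. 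Since $\|A\|\le 3$, the minor $A^{(k)}$ also has norm at most $3$. By Weyl interlacing for finite-rank perturbations, at most $n^{\eps}$ eigenvalues $\lambda_j$ of $\tilde L^{(k)}$ exceed $\sqrt{(2-2\delta)\log n}+3$.

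\emph{Step 2: splitting the trace.} For the remaining eigenvalues, $E-\lambda_j\ge \sqrt{(2-\delta)\log n}-\sqrt{(2-2\delta)\log n}-3\gtrsim \delta\sqrt{\log n}$. Write
\[
\tilde m_n^{(k)}(z)=\frac{1}{n-1}\sum_j\frac{1}{\lambda_j-z}
\]
and split the sum into these two groups.

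The at most $n^\eps$ large eigenvalues each contribute at most $\eta^{-1}\le n^{1/4}$ in absolute value. Their total contribution is therefore $O(n^{\eps+1/4-1})=o(1)$. For the imaginary part they contribute at most $\frac{1}{n}n^{\eps}\eta^{-1}$. This is $O(\eta/\log n)$ because $\eta\ge n^{-1/4}$ gives $\eta^{-1}\le \eta\, n^{1/2}$, so the bound is $\eta\, n^{\eps-1/2}\ll\eta/\log n$ once $\eps<1/2$.

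The remaining eigenvalues give $|\frac{1}{\lambda_j-z}|\le (E-\lambda_j)^{-1}=O(1/\sqrt{\log n})=o(1)$. Their imaginary parts are $\eta/((E-\lambda_j)^2+\eta^2)\le \eta/(c\delta^2\log n)$. Averaging over $j$ gives both claims, uniformly over $z\in S_\delta$ and $k$.

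\emph{Step 3: transfer to $m_n$ and $m_n^{(k)}$.} For $m_n$ the same argument applies with $\mathcal{E}_1$ in place of $\mathcal{E}_2$ and $A$ in place of $A^{(k)}$. For $m_n^{(k)}$ one can use Lemma \ref{lem:rank1}: $|m_n-m_n^{(k)}|\le 1/(n\eta)\le n^{-3/4}$, and this is $O(\eta/\log n)$ since $n^{-3/4}\le n^{-1/2}\eta$. Alternatively, $L^{(k)}$ differs from $\tilde L^{(k)}$ by the diagonal perturbation $\mathrm{diag}(a_{ik})$. On $\mathcal{E}_3$ this has norm at most $n^{-1/2+\eps}$, so the counting in Step 1 is essentially unchanged after shrinking thresholds slightly.

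The only delicate point is the threshold bookkeeping in Step 1. The perturbation must be arranged so that the "non-large" part of the spectrum lies a distance of order $\sqrt{\log n}$ below $E$, not merely $o(\sqrt{\log n})$. This needs $\sqrt{2-\delta}-\sqrt{2-2\delta}>0$ to absorb the additive constant $3$ from $\|A\|$, which holds for large $n$ because the gap grows like $\delta\sqrt{\log n}$ while the $3$ is fixed. If $\delta$ must be coupled to $\eps$ via Proposition \ref{prop:eigen count}, we choose $\eps$ first and then $\delta$.
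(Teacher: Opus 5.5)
Your proposal is correct and follows the paper's argument: split the spectrum of $\tilde L^{(k)}$ at a threshold of order $\sqrt{(2-2\delta)\log n}$, use $\mathcal{E}_2$ and $\|A\|\le 3$ to show that only $O(n^{\varepsilon})$ eigenvalues lie above it, and bound the rest using the $\gtrsim \delta\sqrt{\log n}$ gap to $E$. Your rank-$|J|$ Weyl argument with the explicit $+3$ shift is a more careful version of the paper's ``triangle inequality and Cauchy interlacing,'' and using Lemma~\ref{lem:rank1} for $m_n^{(k)}$ is a valid shortcut; one wording fix is that in Step 1 it is $\lambda_1(\tilde D'-A^{(k)})$, not $\|\tilde D'-A^{(k)}\|$, that is at most $\sqrt{(2-2\delta)\log n}+3$ (the diagonal entries off $J$ may be large and negative), but only this upper bound on the top eigenvalue is used, so the argument stands.
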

	
	\begin{proof}
		We will prove the results only for $\tilde{m}_n^{(k)}$. The proofs for $m_n$ and $m_n^{(k)}$ are identical. 
		
		Partition $[n]$ into two sets \begin{equation}
			\begin{aligned}
				J_1&=\{1 \leq i \leq n-1: \tilde{\lambda}^{(k)}_{i} < \sqrt{(2 -2 \delta) \log n}\},\text{ and }\\
				J_2&=\{1 \leq i \leq n-1: \tilde{\lambda}^{(k)}_{i} \geq\sqrt{(2 - 2\delta) \log n}\}.
			\end{aligned}
		\end{equation} From the triangle inequality and Cauchy's interlacing inequalities, $|J_{2}|=O_{\eps}(n^{\eps})$ on $\mathcal{E}$.  Then, for any $z\in S_{\delta}$
        \begin{equation}
		\begin{aligned}
			\oindicator{\mathcal{E}} |\tilde{m}_{n}^{(k)}(z)|&\leq \oindicator{\mathcal{E}} \frac{1}{n} \sum_{j=1}^{n-1}\left|\frac{1}{({\lambda}_{j}(\tilde{L}^{(k)})-E)+\ii\eta}\right|\\
			&\leq\oindicator{\mathcal{E}} \frac{1}{n}\sum_{j\in J_1}\left|\frac{1}{({\lambda}_{j}(\tilde{L}^{(k)})-E)+\ii\eta}\right|+\oindicator{\mathcal{E}} \frac{1}{n}\sum_{j\in J_2}^{n-1}\left|\frac{1}{({\lambda}_{j}(\tilde{L}^{(k)})-E)+\ii\eta}\right|\\
			&=O\left(\frac{1}{\sqrt{\log n}} \right) + O\left(\frac{n^{\eps}}{n\eta}\right)\\
			&=o(1).
		\end{aligned}
		\end{equation} Similarly, for any $z\in S_{\delta}$ \begin{equation}
		\begin{aligned}
			\oindicator{\mathcal{E}} \eta^{-1}\Im \tilde{m}_{n}^{(k)}(z)&=\oindicator{\mathcal{E}} \frac{1}{n}\sum_{j=1}^{n-1}\frac{1}{({\lambda}_{j}(\tilde{L}^{(k)})-E)^2+\eta^2}\\\
			&\leq\oindicator{\mathcal{E}} \frac{1}{n}\sum_{j\in J_1}\frac{1}{({\lambda}_{j}(\tilde{L}^{(k)})-E)^2+\eta^2}+\oindicator{\mathcal{E}} \frac{1}{n}\sum_{j\in J_2}^{n-1}\frac{1}{({\lambda}_{j}(\tilde{L}^{(k)})-E)^2+\eta^2}\\
			&=O\left(\frac{1}{{\log n}} \right)+O\left(\frac{n^{\eps}}{n\eta^2}\right)\\
			&=o(1).
		\end{aligned}
		\end{equation}
	\end{proof}
	
	Next, we compare $m^{(k)}$ and $\tilde{m}^{(k)}$ explicitly, as it will be helpful to be able to exchange the two in what follows. 
	\begin{lemma}\label{lem:comp m and tilde m}
    One has
		 \begin{equation}\label{eq:comp m and tilde m}
			\sup_{z\in S_{\delta}}\left|m_{n}^{(k)}(z)-\tilde{m}_n^{(k)}(z)\right|\lesssim \sqrt{\frac{1}{n\log n}},
		\end{equation} and \begin{equation}\label{eq:comp Im m and tilde m}
		\sup_{z\in S_{\delta}}\left|\Im m_{n}^{(k)}(z)-\Im\tilde{m}_n^{(k)}(z)\right|\lesssim n^{\eps}\eta^3,
		\end{equation} for all $k$ with overwhelming probability, where $\eps>0$ is the parameter in the definition of the event $\mathcal{E}$.
	\end{lemma}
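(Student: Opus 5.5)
The plan is to exploit that $L^{(k)}$ and $\tilde L^{(k)}$ differ only by a small diagonal perturbation that is independent of $\tilde L^{(k)}$. Write $\Delta^{(k)} := L^{(k)}-\tilde L^{(k)} = \mathrm{diag}(a_{ik})_{i\neq k}$. Since $\tilde D^{(k)}_{ii} = D_{ii}-a_{ik}$ no longer involves the $k$-th column, the matrix $\tilde L^{(k)}$ (hence $\tilde G^{(k)}$) is a function of $\{a_{ij}: i,j\neq k\}$ only. It is therefore independent of the vector $(a_{ik})_{i\neq k}$.

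Applying the resolvent identity twice gives
\[
G^{(k)}-\tilde G^{(k)} = -\tilde G^{(k)}\Delta^{(k)}\tilde G^{(k)} + G^{(k)}\Delta^{(k)}\tilde G^{(k)}\Delta^{(k)}\tilde G^{(k)}.
\]
Taking normalized traces,
\[
m_n^{(k)}-\tilde m_n^{(k)} = -\frac{1}{n-1}\sum_{i\neq k} a_{ik}\big((\tilde G^{(k)})^2\big)_{ii} + \frac{1}{n-1}\tr\big(G^{(k)}\Delta^{(k)}\tilde G^{(k)}\Delta^{(k)}\tilde G^{(k)}\big) =: T_1+T_2 .
\]

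For the quadratic term $T_2$ I work on $\mathcal{E}$. There $\|\Delta^{(k)}\|\le n^{-1/2+\eps}$ by $\mathcal{E}_3$, and $\|G^{(k)}\|\le \eta^{-1}\le n^{1/4}$. Moreover $\|\tilde G^{(k)}\|_{HS}^2 = (n-1)\eta^{-1}\Im\tilde m_n^{(k)} = O(n/\log n)$ by Lemma~\ref{lem:weak local law}. Hence
\[
|T_2|\le \frac1n\|G^{(k)}\|\,\|\Delta^{(k)}\|^2\|\tilde G^{(k)}\|_{HS}^2 \lesssim n^{-3/4+2\eps}/\log n .
\]
This is much smaller than $(n\log n)^{-1/2}$. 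It is also $\lesssim n^{2\eps}\eta^3$, because $\eta\ge n^{-1/4}$ on $S_\delta$. Replacing $\eps$ by $\eps/2$ in the definition of $\mathcal E$ turns this into the exponent $n^{\eps}$ demanded by the statement.

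For the linear term $T_1$, I condition on $\tilde L^{(k)}$ and apply Lemma~\ref{lem:largedev} to the independent sub-Gaussian variables $a_{ik}$, which have $\psi_2$-norm $O(n^{-1/2})$. The coefficient vector has
\[
\sum_i |((\tilde G^{(k)})^2)_{ii}|^2 \le \|(\tilde G^{(k)})^2\|_{HS}^2 = \sum_j |\tilde\lambda_j-z|^{-4}.
\]
I split the eigenvalues into the sets $J_1,J_2$ exactly as in the proof of Lemma~\ref{lem:weak local law}. The bulk contributes $O(n/\log^2 n)$, and the $O(n^{\eps})$ large eigenvalues contribute $O(n^{\eps}\eta^{-4})\le n^{1+\eps}$; this holds on $\mathcal{E}_2$, which is measurable with respect to $\tilde L^{(k)}$. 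The conditional sub-Gaussian scale of $T_1$ is therefore at most $n^{-1}\cdot n^{-1/2}\cdot n^{(1+\eps)/2}=n^{-1+\eps/2}$. With overwhelming probability this gives $|T_1|\le n^{-1+\eps}$, which is negligible against both $(n\log n)^{-1/2}$ and $n^{\eps}\eta^3\ge n^{-3/4+\eps}$. Since $|\Im w|\le|w|$, both displayed estimates follow.

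Uniformity is the remaining issue. The $T_2$ bound is deterministic on $\mathcal{E}$, hence uniform in $z\in S_\delta$ and in $k$. For $T_1$, I take a union bound over $k\in[n]$ and over an $n^{-10}$-net of $S_\delta$, which has polynomial cardinality. I then extend to all of $S_\delta$ using that all the resolvents involved are $\eta^{-2}\le n^{1/2}$-Lipschitz in $z$.

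The main obstacle is controlling $\|(\tilde G^{(k)})^2\|_{HS}$. It requires the count of large diagonal entries of $\tilde L^{(k)}$ (event $\mathcal{E}_2$ together with interlacing) to hold as a property of $\tilde L^{(k)}$ alone, so that the conditioning argument is legitimate. This is where I would check carefully that the bounds on $J_2$ from Lemma~\ref{lem:weak local law} are indeed $\tilde L^{(k)}$-measurable.
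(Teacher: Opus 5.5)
Your proof is correct, but it takes a different route from the paper. The paper never expands the resolvent. It matches eigenvalues through the Hoffman--Wielandt-type inequality $\min_\pi\sum_i|\lambda_i(L^{(k)})-\lambda_{\pi(i)}(\tilde L^{(k)})|^2\le\tr(\Delta^{(k)})^2=\sum_{i}a_{ik}^2=O(\log n)$. It then splits the matched pairs into two groups:
\begin{enumerate}
\item those with both eigenvalues below $\sqrt{(2-2\delta)\log n}$, where $|(\lambda-z)^{-1}-(\tilde\lambda-z)^{-1}|\lesssim|\lambda-\tilde\lambda|/\log n$ because $\Re z\ge\sqrt{(2-\delta)\log n}$;
\item the $O(n^{\eps})$ remaining pairs, each bounded trivially by $2/\eta$.
\end{enumerate}
Cauchy--Schwarz on the first group then gives a bound of order $(n\log n)^{-1/2}+n^{\eps}/(n\eta)$, with an extra factor of $\eta$ on the first term for imaginary parts. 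That argument is deterministic on $\mathcal{E}$, so uniformity in $z$ and $k$ is automatic and no independence is needed. You instead expand to second order in $\Delta^{(k)}$ and use that $(a_{ik})_{i\neq k}$ is independent of $\tilde L^{(k)}$; this is the same decoupling the paper uses later in Lemma~\ref{lem:r_k}. It gives a sharper bound, roughly $n^{-3/4+O(\eps)}$, at the price of a conditioning step and a net.

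Your flagged obstacle is harmless. Condition on the event $\{\sup_{z\in S_\delta}\sum_j|\lambda_j(\tilde L^{(k)})-z|^{-4}\le n^{1+\eps}\}$, which is a function of $\tilde L^{(k)}$ alone and contains $\mathcal{E}$. So $\mathcal{E}_2$ itself need not be $\tilde L^{(k)}$-measurable. For $T_2$, rather than shrinking $\eps$, use $\|\Delta^{(k)}\|=\max_i|a_{ik}|\lesssim\sqrt{\log n/n}$ with overwhelming probability. This gives $|T_2|\lesssim 1/(n\eta)\le\eta^3$ on $S_\delta$ directly.

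With that bound your expansion actually needs no concentration at all. First, $|T_1|\le\frac{1}{n-1}\|\Delta^{(k)}\|\,\|\tilde G^{(k)}\|_{HS}^2\lesssim(n\log n)^{-1/2}$, which is exactly the stated rate. Second, use the trace-norm estimate $|\Im T_1|\le\frac{1}{n-1}\|\Delta^{(k)}\|\sum_j|\Im(\lambda_j(\tilde L^{(k)})-z)^{-2}|$ together with $|\Im(\lambda-z)^{-2}|\le 2\eta|\lambda-z|^{-3}$ and your eigenvalue split. This gives $|\Im T_1|\lesssim\eta n^{-1/2}+n^{\eps-3/2}\eta^{-2}\sqrt{\log n}\lesssim\eta^3$ on $S_\delta$. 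So your route also becomes deterministic on $\mathcal{E}\cap\{\max_i|a_{ik}|\lesssim\sqrt{\log n/n}\}$, and the net argument can be dropped.
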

	
	\begin{proof}
		Let $\pi$ be any permutation of $[n-1]$  and define the sets \begin{equation}
			\begin{aligned}
				J_1&=\{1 \leq i \leq n-1: \tilde{\lambda}^{(k)}_{\pi(i)} < \sqrt{(2 -2 \delta) \log n}\text{ and }  {\lambda}^{(k)}_{i} < \sqrt{(2 -2 \delta) \log n} \},\text{ and }\\
				J_2&=[n-1]\setminus J_1.
			\end{aligned}
		\end{equation} It follows from the union bound that \begin{equation}
		|J_2|\lesssim n^{\eps}
		\end{equation} on $\mathcal{E}$. From the triangle and Jensen's inequality \begin{equation}
		\begin{aligned}
			\sup_{z\in S_{\delta}}\left|m^{(k)}_n(z)-\tilde{m}_n^{(k)}(z)\right|^2&=\sup_{z\in S_{\delta}}\left|\frac{1}{n-1}\sum_{i=1}^{n-1}\frac{1}{\lambda_i^{(k)}-z }-\frac{1}{\tilde\lambda_{\pi(i)}^{(k)}-z }\right|^2\\
			&\leq \sup_{z\in S_{\delta}}\left|\frac{1}{n-1}\sum_{i\in J_1}\frac{1}{\lambda_i^{(k)}-z }-\frac{1}{\tilde\lambda_{\pi(i)}^{(k)}-z }\right|^2\\
			&\qquad+\sup_{z\in S_{\delta}}\left|\frac{1}{n-1}\sum_{i\in J_2}^{}\frac{1}{\lambda_i^{(k)}-z }-\frac{1}{\tilde\lambda_{\pi(i)}^{(k)}-z }\right|^2\\
			&\lesssim \frac{1}{(n-1)\log^4 n}\sum_{i\in J_1}|\lambda_i^{(k)}-\tilde{\lambda}_{\pi(i)}^{(k)}|^2+\frac{n^{2\eps}}{(n-1)^2\eta^2}.
		\end{aligned}
		\end{equation} From \cite[Theorem A.37]{MR2567175}, \begin{equation}
		\min_{\pi}\sum_{i=1}^{n-1}|\lambda_i^{(k)}-\tilde{\lambda}_{\pi(i)}^{(k)}|^2\leq\tr(L^{(k)}-\tilde{L}^{(k)} )^2=\sum_{i}^{(k)}a_{ik}^2=O\left(\log n\right),
		\end{equation} with overwhelming probability, where the minimum is over all permutations of $[n-1]$. Thus, after an appropriate choice of $\pi$,  \begin{equation}
			\sup_{z\in S_{\delta}}\left|m^{(k)}_n(z)-\tilde{m}_n^{(k)}(z)\right|^2\lesssim\frac{1}{(n-1)\log^3n}+\frac{n^{2\eps}}{(n-1)^2\eta^2},
		\end{equation} with overwhelming probability, completing the proof of \eqref{eq:comp m and tilde m}. The same argument also yields \begin{equation}
		\sup_{z\in S_{\delta}}\left|\Im m_{n}^{(k)}(z)-\Im\tilde{m}_n^{(k)}(z)\right|^2\lesssim \frac{\eta^2}{(n-1)\log^3n}+\frac{n^{2\eps}}{(n-1)^2\eta^2},
		\end{equation} with overwhelming probability. 
	\end{proof}
	
	We again update $\mathcal{E}$ to include the intersection with the event in \eqref{eq:comp m and tilde m}.
	\begin{lemma} \label{lem:r_k}
		For all $k \in [n]$ and all $\xi >0$, with overwhelming probability,
		\[
		\sup_{z\in S_{\delta}}	|r_k| = O(n^{-1/2} \log^{2 \xi}n).
		\]
	\end{lemma}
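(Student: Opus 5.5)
The plan is to remove the dependence between row $k$ and $G^{(k)}$ by passing to $\tilde G^{(k)}$, which is independent of $(a_{ki})_{i\neq k}$. I then apply Hanson--Wright conditionally on $\tilde G^{(k)}$, and finally control the cost of swapping $G^{(k)}$ for $\tilde G^{(k)}$ using the resolvent identity. Write $\mathbf a=(a_{ki})_{i\neq k}$ and $\Delta=\operatorname{diag}(a_{ik})_{i\neq k}$, so that $L^{(k)}=\tilde L^{(k)}+\Delta$. Then
\[
r_k=\Big(\mathbf a^{\mathrm T}\tilde G^{(k)}\mathbf a-\tfrac1n\tr \tilde G^{(k)}\Big)+\Big(\mathbf a^{\mathrm T}(G^{(k)}-\tilde G^{(k)})\mathbf a\Big)-\tfrac1n\tr\big(G^{(k)}-\tilde G^{(k)}\big)-\big(m_n-m_n^{(k)}\big).
\]
The last term is $O(1/(n\eta))=O(n^{-3/4})$ by Lemma \ref{lem:rank1}. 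The third term is $O(\sqrt{1/(n\log n)})$ by Lemma \ref{lem:comp m and tilde m}. Both are negligible.

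For the first term, I condition on $\tilde G^{(k)}$ and apply Lemma \ref{lem:HW} to $X=\sqrt n\,\mathbf a$ with matrix $\tilde G^{(k)}/n$ (real and imaginary parts separately). By the Ward identity,
\[
\|\tilde G^{(k)}\|_{HS}^2=\eta^{-1}\Im\tr\tilde G^{(k)}=(n-1)\eta^{-1}\Im\tilde m_n^{(k)}=O(n/\log n)
\]
on $\mathcal E$, using Lemma \ref{lem:weak local law}. Moreover $\|\tilde G^{(k)}\|\le\eta^{-1}\le n^{1/4}$. Taking $t=n^{-1/2}\log^{2\xi}n$ gives the exponent
\[
\min\big(t^2 n\log n,\ t\,n^{3/4}\big)\ \ge\ \log^{1+4\xi}n,
\]
which is a superpolynomially small probability. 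Note that $\mathcal E$ restricted to $\tilde L^{(k)}$ (i.e.\ $\mathcal E_2$) is measurable with respect to variables independent of row $k$, so the conditioning is legitimate.

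The main obstacle is the second term, since $\mathbf a$ and $\Delta$ are built from the same variables. A crude bound $\|\Delta\|\eta^{-2}\|\mathbf a\|^2$ is far too weak. Instead I would use $G^{(k)}-\tilde G^{(k)}=-G^{(k)}\Delta\tilde G^{(k)}$ and Cauchy--Schwarz:
\[
|\mathbf a^{\mathrm T}G^{(k)}\Delta\tilde G^{(k)}\mathbf a|\le\|\Delta\|\,\|G^{(k)}\mathbf a\|_2\,\|\tilde G^{(k)}\mathbf a\|_2 .
\]
The factors are handled as follows.
\begin{itemize}
\item Since the entries are sub-Gaussian, $\|\Delta\|=\max_i|a_{ik}|=O(\sqrt{\log n/n})$ with overwhelming probability, so I add this to $\mathcal E$.
\item Conditionally on $\tilde G^{(k)}$, Lemma \ref{lem:concentration of vec-matrix} with $B=\tilde G^{(k)}/\sqrt n$ gives
\[
\|\tilde G^{(k)}\mathbf a\|_2=\|B\|_F+O\big(\|B\|\log n\big)=O\big((\eta^{-1}\Im\tilde m_n^{(k)})^{1/2}\big)+O\big(n^{-1/4}\log n\big)=O\big(\log^{-1/2}n\big)
\]
with overwhelming probability.
\item Then
\[
\|G^{(k)}\mathbf a\|_2\le\|\tilde G^{(k)}\mathbf a\|_2+\eta^{-1}\|\Delta\|\,\|\tilde G^{(k)}\mathbf a\|_2=O(\log^{-1/2}n),
\]
because $\eta^{-1}\|\Delta\|\le n^{1/4}\sqrt{\log n/n}=o(1)$.
\end{itemize}
Altogether the second term is $O(n^{-1/2})$, which is well within $n^{-1/2}\log^{2\xi}n$.

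Finally, to make the bound uniform over $z\in S_\delta$, I take a grid of $S_\delta$ with spacing $n^{-3}$, which has polynomially many points, and union bound the above overwhelming-probability estimates over the grid and over $k\in[n]$. On $S_\delta$ every quantity in $r_k$ is Lipschitz in $z$ with constant $O(\eta^{-2}\,\mathrm{poly}(\log n))=O(n)$ (using $\|\mathbf a\|_2=O(1)$ on $\mathcal E$), so the bound extends from the grid to all of $S_\delta$ at negligible cost.
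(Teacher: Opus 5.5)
Your proposal is correct and follows the paper's proof. The ingredients match: you decouple row $k$ via $\tilde G^{(k)}$, apply Hanson--Wright conditionally to $\mathbf a^{\mathrm T}\tilde G^{(k)}\mathbf a$, and control the swap $G^{(k)}\leftrightarrow\tilde G^{(k)}$ with Lemma \ref{lem:concentration of vec-matrix} and $\|\Delta\|=O(\sqrt{\log n/n})$. You then handle the trace corrections with Lemmas \ref{lem:rank1} and \ref{lem:comp m and tilde m} and get uniformity in $z$ from a net plus Lipschitz argument.

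The only difference is that you stop the resolvent expansion at first order and use $\|G^{(k)}\mathbf a\|_2\le(1+\eta^{-1}\|\Delta\|)\|\tilde G^{(k)}\mathbf a\|_2$, whereas the paper expands to second order and bounds the remainder crudely. Your variant is slightly tighter, and it keeps the correct size $\|\tilde G^{(k)}\mathbf a\|_2=O(\log^{-1/2}n)$.
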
	
	\begin{proof}
	\begin{align*}
	r_k &= \sum_{ij} G_{ij}^{(k)} \left(a_{ki} a_{kj} - \delta_{ij}\frac{1}{n} \right) - (m_n(z) - m_n^{(k)}(z)) \\
	&= \sum_{i} G_{ii}^{(k)} \left(a_{ki}^2 - \frac{1}{n}\right) + \sum_{i \neq j} G_{ij}^{(k)} a_{ki} a_{kj} - (m_n(z) - m_n^{(k)}(z)).
	\end{align*}
 	The main difficulty is that that $G_{ij}^{(k)}$ and $a_{ki}$ are dependent.  We use the resolvent identity to decouple these terms.  Observe that for any $m \in \mathbb{N}$, 
 	\[
 	G = \tilde{G} + \tilde{G} \Delta \tilde{G} + \tilde{G} \Delta \tilde{G} \Delta G,
 	\]
 	where $\Delta = \tilde{L}^{(k)} - L^{(k)}$.  Note that $\Delta$ is a diagonal matrix with $-\mathbf{a}^{(k)}_k$ along the diagonal.  Thus,
 	\begin{align*}
 	r_k &= \left((\mathbf{a}^{(k)})^\mathrm{T} \tilde{G}^{(k)} \mathbf{a}^{(k)} - m_n^{(k)}(z) \right) + (\mathbf{a}^{(k)})^\mathrm{T} \tilde{G}^{(k)} \Delta_k \tilde{G}^{(k)} \mathbf{a}^{(k)} \\
 	&\qquad + (\mathbf{a}^{(k)})^\mathrm{T} \tilde{G}^{(k)} \Delta_k \tilde{G}^{(k)} \Delta_k G^{(k)} \mathbf{a}^{(k)} - (m_n(z) - m_n^{(k)}(z)).
 	\end{align*}
	
	We first observe that 
	\begin{align} \label{eq:tracebound}
	\oindicator{\mathcal{E}} \sum_{i} |\tilde{G}^{(k)}_{ii}|^2 &\leq \oindicator{\mathcal{E}} \sum_{i,j} |\tilde{G}^{(k)}_{ij}|^2 \nonumber \\
	&\leq \oindicator{\mathcal{E}} \tr \tilde{G}^{(k)} (\tilde{G}^{(k)})^* \nonumber \\
	&= \oindicator{\mathcal{E}} \sum_i \frac{1}{(\lambda_i(\tilde{L}^{(k)}) - E)^2 + \eta^2} \\
	&\lesssim \frac{n}{\log n} + n^{\varepsilon} \eta^{-2},
	\end{align}
	where the last line follows from the definition of the event $\mathcal{E}$.  
	By Lemma \ref{lem:HW} and \ref{lem:comp m and tilde m}, with overwhelming probability, for all $k$, and any $\xi > 0$,
	\[
	\left|(\mathbf{a}^{(k)})^\mathrm{T} \tilde{G}^{(k)} \mathbf{a}^{(k)} - {m}_n^{(k)}(z)  \right| \leq 	\left|(\mathbf{a}^{(k)})^\mathrm{T} \tilde{G}^{(k)} \mathbf{a}^{(k)} - \tilde{m}_n^{(k)}(z)  \right| +	\left|\tilde{m}_n^{(k)}(z)- {m}_n^{(k)}(z)  \right| \lesssim n^{-1/2} \log^{\xi} n. 
	\]

	Using Lemma \ref{lem:concentration of vec-matrix} we have that 
	\begin{equation}\label{eq:subGauss bound on aG}
		\left\| \|\mathbf{a}^{T}\tilde{G}^{(k)}\|_{2}-\|\tilde{G}^{(k)}\|_{HS}/\sqrt{n}    \right\|_{\psi_2}\leq \frac{C\|\tilde{G}^{(k)}\|}{\sqrt{n}}=O\left(\frac{1}{\eta\sqrt{n}}\right),
	\end{equation}where the sub-Gaussian norm is taken conditionally on $\tilde{G}^{(k)}$. Then, using the Ward identity and Lemma \ref{lem:weak local law} we have with overwhelming probability \begin{equation}\label{eq:HS bound on G}
		\begin{aligned}
			\|\tilde{G}^{(k)} \|_{HS}/\sqrt{n}&=\frac{1}{\sqrt{n}}\sqrt{\sum_{i,j}^{(k)}|\tilde{G}^{(k)}_{ij}|^{2}    }\\
			&=\frac{1}{\sqrt{n}}\sqrt{\eta^{-1}\sum_{i}^{(k)}\Im \tilde{G}^{(k)}_{ii}    }\\
			&=\sqrt{\eta^{-1}\Im\tilde{m}_n^{(k)}(z)}\\
			&=O\left(\frac{1}{\log n}\right),
		\end{aligned}
	\end{equation} on $\mathcal{E}$.  Combining \eqref{eq:subGauss bound on aG} and \eqref{eq:HS bound on G}, \begin{equation}
	\|(\mathbf{a}^{(k)})^\mathrm{T} \tilde{G}^{(k)}\|_2 =O\left(\frac{1}{\log n}\right),
	\end{equation} with overwhelming probability.

	Since $\Delta$ is diagonal with sub-Gaussian entries, we also have that  
	\[
	\|\Delta\| \lesssim \sqrt{\frac{\log n}{n}},
	\] with overwhelming probability.

	Combining these estimates with the deterministic bound $\|G^{(k)}\| \leq \eta^{-1}$, we conclude that with overwhelming probability,
	\begin{align*}
	|(\mathbf{a}^{(k)})^\mathrm{T} \tilde{G}^{(k)} \Delta_k \tilde{G}^{(k)} \mathbf{a}^{(k)} | &\leq \|(\mathbf{a}^{(k)})^\mathrm{T} \tilde{G}^{(k)}\|_2 \|\Delta \| \|\tilde{G}^{(k)} \mathbf{a}^{(k)}\|_2 \\
	&\lesssim \sqrt{\frac{1 }{n\log n}},\\
	\end{align*}	
	and
	\begin{align*}
	|(\mathbf{a}^{(k)})^\mathrm{T} \tilde{G}^{(k)} \Delta_k \tilde{G}^{(k)} \Delta_k G^{(k)} \mathbf{a}^{(k)}| &\lesssim  \frac{\log^2 n}{n^{3/2}} \eta^{-2} \\
	&\lesssim \frac{\log n}{n}.
	\end{align*} 
	Combining these estimates with Lemma \ref{lem:rank1} and noting the above is uniform over $S_{\delta}$ concludes the proof of the result for fixed $z\in S_{\delta}$.  
	To extend from fixed $z$ to $\sup_{z\in S_{\delta}}$ we observe that $r_{k}$ depends on $z$ only through $m_n, m_n^{(k)},$ and a linear combination of entries of $G$. As functions on $S_{\delta}$, all of these terms are Lipschitz in $z$ with Lipschitz constants bounded by $\eta^{-2}$. It then follows that on $\mathcal{E}$, $r_{k}$ as a function over $S_{\delta}$ is Lipschitz in $z$ with Lipschitz constant bounded by $n^{2+\eps}$. Let $A\subset S_{\delta}$ be an $n^{-10}$-net, i.e.\ a subset of $S_{\delta}$ such that any element of $S_{\delta}$ is at most $n^{-10}$ away from a element of $A$, of cardinality $|A|\leq n^{21}$. We then show \begin{equation*}
		\sup_{z\in A} |r_{k}|\leq O\left(n^{-1/2}\log^{2\xi}n\right),
	\end{equation*}  with overwhelming probability using a simple union bound, and then extend this to 
		\[
	\sup_{z\in S_{\delta}}	|r_k| = O(n^{-1/2} \log^{2 \xi}n),
	\] using the Lipschitz bound for $r_{k}$. 
	\end{proof}

	At this point we update the event $\mathcal{E}$ to include the intersection with \begin{equation}
		\bigcap_{k} \left\{\sup_{z\in S_{\delta}} |r_k| \leq(n^{-1/2} \log^{2 \xi}n) \right\},
	\end{equation} for some $\xi>0$. We will need a slight refinement of our previous result to the imaginary component of the error term.
	
	\begin{lemma} \label{cor:imr}
	For all $k\in [n]$,	\[
		\sup_{z\in S_{\delta}}|\Im r_k(z)| \lesssim \eta^3 n^{\eps},
		\] with overwhelming probability, where $\eps>0$ is the parameter in the definition of the event $\mathcal{E}$.
		
	\end{lemma}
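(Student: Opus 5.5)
We follow the decomposition of $r_k$ used in the proof of Lemma~\ref{lem:r_k}, but now keep only imaginary parts:
\begin{align*}
\Im r_k &= \Im\left((\mathbf{a}^{(k)})^\mathrm{T} \tilde{G}^{(k)} \mathbf{a}^{(k)} - m_n^{(k)}(z) \right) + \Im\, (\mathbf{a}^{(k)})^\mathrm{T} \tilde{G}^{(k)} \Delta_k \tilde{G}^{(k)} \mathbf{a}^{(k)} \\
&\qquad + \Im\, (\mathbf{a}^{(k)})^\mathrm{T} \tilde{G}^{(k)} \Delta_k \tilde{G}^{(k)} \Delta_k G^{(k)} \mathbf{a}^{(k)} - \Im(m_n(z) - m_n^{(k)}(z)).
\end{align*}
The key observation is that, since $\mathbf{a}^{(k)}$ is real, $\Im\,(\mathbf{a}^{(k)})^\mathrm{T}\tilde G^{(k)}\mathbf{a}^{(k)} = (\mathbf{a}^{(k)})^\mathrm{T}(\Im \tilde G^{(k)})\mathbf{a}^{(k)}$. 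Here $\Im\tilde G^{(k)} = \eta\, \tilde G^{(k)}(\tilde G^{(k)})^*$ is a positive matrix whose size is governed by $\eta$ rather than by $\eta^{-1}$. We therefore bound each term by a quantity carrying explicit powers of $\eta$. Throughout we use $\eta\ge n^{-1/4}$, so that $n^{-1/2}\lesssim\eta^2$ and $n^{-1}\lesssim \eta^4$.

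\textbf{First term.} Apply Lemma~\ref{lem:HW} to the real symmetric matrix $B=\Im \tilde G^{(k)}/n$, which is independent of $\mathbf{a}^{(k)}$. By the spectral decomposition and the definition of $\mathcal{E}$ (as in \eqref{eq:tracebound}),
\[
\|\Im\tilde G^{(k)}\|_{HS}^2=\sum_i \frac{\eta^2}{((\tilde\lambda_i-E)^2+\eta^2)^2}\lesssim \frac{n\eta^2}{\log^4 n}+n^{\eps}\eta^{-2},
\]
and $\|\Im \tilde G^{(k)}\|\le \eta^{-1}$. Hanson--Wright then gives a fluctuation of order
\[
n^{-1}\|\Im\tilde G^{(k)}\|_{HS}\log^\xi n\lesssim n^{-1/2}\eta\log^{\xi}n+n^{-1+\eps/2}\eta^{-1}\log^\xi n .
\]
Since $\eta\ge n^{-1/4}$, we have $n^{-1/2}\lesssim\eta^2$, so the first piece is $\lesssim\eta^3 n^{\eps}$; similarly $n^{-1}\eta^{-1}\lesssim \eta^3$. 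The centering $\Im\tilde m^{(k)}_n-\Im m^{(k)}_n$ is handled by \eqref{eq:comp Im m and tilde m}, which gives exactly $n^{\eps}\eta^3$.

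\textbf{Rank-one term.} For $\Im(m_n-m_n^{(k)})$, the Cauchy interlacing argument behind Lemma~\ref{lem:rank1} applied to the function $x\mapsto \eta/((x-E)^2+\eta^2)$ gives a bound of $O(1/(n\eta))$. This is $\lesssim \eta^3$ because $n\eta^4\ge 1$.

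\textbf{Remaining two terms.} These are bounded by their full moduli, using only the quantities already controlled in the proof of Lemma~\ref{lem:r_k}:
\[
\|(\mathbf{a}^{(k)})^\mathrm{T}\tilde G^{(k)}\|_2^2\lesssim \eta^{-1}\Im \tilde m^{(k)}_n \lesssim 1/\log n, \qquad \|\Delta\|\lesssim\sqrt{\log n/n}.
\]
This gives bounds $\sqrt{\log n/n}$ and $\log n/n\cdot\eta^{-1}$ respectively. Since $\sqrt{\log n/ n}$ is not $\lesssim \eta^3$ at $\eta=n^{-1/4}$ (where $\eta^3=n^{-3/4}$), the first of these is too crude.

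\textbf{Main obstacle.} This is the second term, $\Im\,(\mathbf{a}^{(k)})^\mathrm{T}\tilde G^{(k)}\Delta\tilde G^{(k)}\mathbf{a}^{(k)}$. To handle it, write
\[
\Im(\tilde G\Delta\tilde G)=\tfrac{1}{2i}\left(\tilde G\Delta\tilde G-\tilde G^*\Delta\tilde G^*\right),
\]
and expand using $\tilde G-\tilde G^*=2i\eta\tilde G\tilde G^*$. This expresses the term as a sum of quadratic forms each containing an extra factor $\eta\,\tilde G\tilde G^*$. Each such form is then bounded by $\eta\,\|(\mathbf{a}^{(k)})^\mathrm{T}\tilde G\|_2^2\|\Delta\|\|\tilde G\|$, using the Ward-identity estimates. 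If the resulting power of $\eta$ still falls short, we will instead exploit the randomness of $\Delta$: its entries are exactly the $a_{ik}$, so the term is a cubic form in $\mathbf{a}^{(k)}$, and we would decouple and apply Lemma~\ref{lem:largedev} conditionally.

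\textbf{Uniformity.} Finally, we pass to the supremum over $z\in S_\delta$ and a union over $k$ by the same $n^{-10}$-net and Lipschitz argument used at the end of the proof of Lemma~\ref{lem:r_k}.
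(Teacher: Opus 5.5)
Your treatment of the Hanson--Wright term, of the centering via \eqref{eq:comp Im m and tilde m}, and of the rank-one term is fine and matches the paper. The gap is at the step you call the main obstacle, and your primary plan there fails. Write $\mathbf a=\mathbf a^{(k)}$ and $\tilde G=\tilde G^{(k)}$. The identity $\Im\,\mathbf a^{\mathrm T}\tilde G\Delta\tilde G\mathbf a=\eta\,\mathbf a^{\mathrm T}\tilde G\tilde G^*\Delta\tilde G\mathbf a+\eta\,\mathbf a^{\mathrm T}\tilde G^*\Delta\tilde G\tilde G^*\mathbf a$ is correct. However, the bound $\eta\|\mathbf a^{\mathrm T}\tilde G\|_2^2\|\Delta\|\|\tilde G\|$ spends the operator norm $\|\tilde G\|\le\eta^{-1}$ on exactly the factor that produced the $\eta$. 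You are therefore back to $\|\mathbf a^{\mathrm T}\tilde G\|_2^2\|\Delta\|\approx n^{-1/2}(\log n)^{-1/2}$, independent of $\eta$. At $\eta=n^{-1/4}$ this is much larger than $\eta^3n^{\eps}=n^{-3/4+\eps}$ for every $\eps<1/4$, which is the regime the rest of the argument needs (e.g.\ the $J_2$ estimate requires $n^{\eps}\eta=o(1)$). The cubic-form fallback is not carried out, so as written this term is not controlled. A smaller issue: your bound $\eta^{-1}\log n/n$ for the three-resolvent term does not follow from the quantities you list. Using $\|G^{(k)}\mathbf a\|_2\le\eta^{-1}\|\mathbf a\|_2$ you only get $\eta^{-2}/n$, which equals $n^{-1/2}$ at $\eta=n^{-1/4}$ and is too large.

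The missing idea is to estimate the vector $\mathbf a^{\mathrm T}\Im\tilde G$ directly instead of through operator norms. Since $\eta\tilde G\tilde G^*=\Im\tilde G$, your two forms are $\mathbf a^{\mathrm T}(\Im\tilde G)\Delta\tilde G\mathbf a$ and its mirror image. Equivalently, as in the paper, $\Im\,\mathbf a^{\mathrm T}\tilde G\Delta\tilde G\mathbf a=\mathbf a^{\mathrm T}\Im\tilde G\,\Delta\,\Re\tilde G\,\mathbf a+\mathbf a^{\mathrm T}\Re\tilde G\,\Delta\,\Im\tilde G\,\mathbf a$, so the term is at most $2\|\mathbf a^{\mathrm T}\Im\tilde G\|_2\|\Delta\|\|\tilde G\mathbf a\|_2$. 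Apply Lemma \ref{lem:concentration of vec-matrix} conditionally on $\tilde G$: $\|\mathbf a^{\mathrm T}\Im\tilde G\|_2$ concentrates around $\|\Im\tilde G\|_{HS}/\sqrt n$, with fluctuations $O(\|\Im\tilde G\|/\sqrt n)=O(\eta^{-1}n^{-1/2})=O(\eta)$. The Hilbert--Schmidt norm is the one you already computed for the first term: $\frac1n\|\Im\tilde G\|_{HS}^2\lesssim\eta^2/\log^2 n+n^{\eps-1}\eta^{-2}\lesssim\eta^2n^{\eps}$ (the power of $\log n$ is $2$, not $4$, which is harmless). Hence $\|\mathbf a^{\mathrm T}\Im\tilde G\|_2\lesssim\eta n^{\eps/2}$ up to logarithms, and the term is $\lesssim\eta n^{\eps/2}n^{-1/2}\lesssim\eta^3n^{\eps/2}$ using $n^{-1/2}\le\eta^2$; no decoupling of a cubic form is needed. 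For the three-resolvent term, the paper expands $G^{(k)}$ once more with the resolvent identity, so that only $\tilde G^{(k)}$ meets $\mathbf a$. Equivalently, $G^{(k)}=\tilde G+\tilde G\Delta G^{(k)}$ together with $\eta^{-1}\|\Delta\|=o(1)$ gives $\|G^{(k)}\mathbf a\|_2\le(1+o(1))\|\tilde G\mathbf a\|_2\lesssim(\log n)^{-1/2}$. After this the term is $O(1/(n\eta))=O(\eta^3)$.
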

	\begin{proof}
		By \eqref{eq:comp Im m and tilde m}, we may replace the $\Im m_n^{(k)}$ with $\Im \tilde{m}_{n}^{(k)}$, up to a loss of $n^{\eps}$. By Lemma \ref{lem:HW}, on the event $\mathcal{E}$
		\begin{align}
		 \sum_{ij}^{(k)}  \Im \tilde{G}_{ij}^{(k)} (a_{ki} a_{kj} - \delta_{ij} \frac{1}{n}) &\leq \left(\frac{1}{n^2} \sum_{ij} |\Im \tilde{G}^{(k)}_{ij}|^2 \right)^{1/2} \log^{1/2+\xi} n\nonumber \\
		 &= \frac{1}{n}\left( \tr \frac{\tilde{G}^{(k)} - (\tilde{G}^{k})^*}{2\ii} \frac{\tilde{G}^{(k)} - (\tilde{G}^{(k)})^*}{2\ii} \right)^{1/2} \log^{1/2+\xi} n \nonumber\\
		 &\leq \frac{1}{n} \left( \sum \frac{\eta^2}{((\lambda_i(\tilde{L}^{(k)}) - E)^2 + \eta^2)^2}\right)^{1/2} \log^{1/2+\xi} n\nonumber \\
		 &\lesssim \frac{1}{n}\left(\frac{n\eta^2}{\log n}+n^{\eps}\eta^{-2} \right)^{1/2}\log^{\frac{1}{2}+\xi}n,\label{eq:first im r_k error}
		\end{align} with overwhelming probability. 
	
		For the next term, we observe that
		\begin{align*}
		|\Im (\mathbf{a}^{(k)})^\mathrm{T} \tilde{G}^{(k)} \Delta_k \tilde{G}^{(k)} \mathbf{a}^{(k)}| &\leq |(\mathbf{a}^{(k)})^\mathrm{T} \Im \tilde{G}^{(k)} \Delta_k \Re \tilde{G}^{(k)} \mathbf{a}^{(k)}| + |(\mathbf{a}^{(k)})^\mathrm{T} \Re \tilde{G}^{(k)} \Delta_k \Im \tilde{G}^{(k)} \mathbf{a}^{(k)}| \\
		&\leq 2 \|(\mathbf{a}^{(k)})^\mathrm{T} \Im \tilde{G}^{(k)}\|_2 \|\Delta_k\| \|\Re \tilde{G}^{(k)} \mathbf{a}^{(k)}\|_2. 
		\end{align*} From here, applying Lemma \ref{lem:concentration of vec-matrix} to $\|(\mathbf{a}^{(k)})^{\mathrm{T}}\Im\tilde{G}^{(k)}\|_2$ gives the bound\begin{equation}\label{eq:a Im G HS bound}
		\begin{aligned}
			\|(\mathbf{a}^{(k)})^{\mathrm{T}}\Im\tilde{G}^{(k)}\|_2&\lesssim \left\|\Im \tilde{G}^{(k)} \right\|_{HS}/\sqrt{n}\\
			&\lesssim \left( \frac{1}{n}\sum_{i} \frac{\eta^2}{((\lambda_i(\tilde{L}^{(k)}) - E)^2 + \eta^2)^2}\right)^{1/2}\\
			&\lesssim \eta n^{\eps/2},
		\end{aligned}
		\end{equation} and
		\begin{equation}\label{eq:Im rk biggest loss}
		|\Im(\mathbf{a}^{(k)})^\mathrm{T} \tilde{G}^{(k)} \Delta_k \tilde{G}^{(k)}  \mathbf{a}^{(k)}|\lesssim \eta n^{-1/2}n^{\eps/2}\log^{-1/2}n,
		\end{equation} with overwhelming probability.

		For the term of $r_k$ involving three resolvents, we need to apply the resolvent formula one more time.
		\begin{align*}
		|(\mathbf{a}^{(k)})^\mathrm{T} \tilde{G}^{(k)} \Delta_k \tilde{G}^{(k)} \Delta_k G^{(k)} \mathbf{a}^{(k)}| &\leq  	\|(\mathbf{a}^{(k)})^\mathrm{T} \tilde{G}^{(k)}\|_2 \|\Delta_k\| \|\tilde{G}^{(k)}\| \|\Delta_k\| \|\tilde{G}^{(k)} \mathbf{a}^{(k)}\|_2 \\
		&\quad + \|(\mathbf{a}^{(k)})^\mathrm{T} \tilde{G}^{(k)}\|_2 \|\Delta_k\| \|\tilde{G}^{(k)}\| \|\Delta_k\| \|\tilde{G}^{(k)}\| \|\Delta_k\| \|\tilde{G}^{(k)} \mathbf{a}^{(k)}\| \\
		&\quad + \|(\mathbf{a}^{(k)})^\mathrm{T} \tilde{G}^{(k)}\|_2 \|\Delta_k\| \|\tilde{G}^{(k)}\| \|\Delta_k\| \|\tilde{G}^{(k)}\| \|\Delta_k\| \|\tilde{G}^{(k)}\| \|\Delta_k\| \|{G}^{(k)}\| \|\mathbf{a}^{(k)}\| \\
		&\lesssim \log^{3/2} n \eta^{3},
		\end{align*} with overwhelming probability. 
	\end{proof} We again update the event $\mathcal{E}$ to include the intersection with \begin{equation*}
	\bigcap_{k}\left\{\sup_{z\in S_{\delta}}|\Im r_k(z)| \leq \eta^3 n^{\eps/2}\log^{2 \xi} n \right\},
	\end{equation*} for some $\xi>0$.

	For the remainder of the proof we will only work with the eigenvalues $\lambda_{j}\equiv \lambda_{j}(L)$ of the full Laplacian $L$.  Now, we let $z_1 = E + \ii \eta$ and $z_2 = E+ \ii \sqrt{2} \eta$ and define 
	\begin{align*}
	I_1 &:= n \eta  (\Im m_n(z_1) - \frac{1}{\sqrt 2} \Im m_n(z_2))\\
	&= \sum_{j=1}^n \frac{\eta^2}{(\lambda_j - E)^2 + \eta^2} - \sum_{j=1}^n \frac{\eta^2}{(\lambda_j - E)^2 + 2 \eta^2} \\
	&= \eta \sum_j \Im(G_{jj} (z_1)  - \frac{1}{\sqrt 2} G_{jj}(z_2)) 
	\end{align*}
	and
	\begin{align*}
	I_2 &:=  \eta \Im \sum_{k=1}^n \frac{1}{ \sum_{i}^{(k)} a_{ik} - z- m_n(z_1) } -  \frac{1/\sqrt 2}{ \sum_{i}^{(k)} a_{ik} - z- m_n(z_2) } \\
	&=\sum_{k=1}^n \frac{\eta^2+ \eta \Im m_n(z_1)}{(\sum_i^{(k)} a_{ik} - E - \Re m_n(z_1))^2 +(\eta + \Im m_n(z_1))^2}  \\
	&\qquad - \sum_{j=1}^n \frac{\eta^2+ \frac{1}{\sqrt{2}}\eta \Im m_n(z_2)}{(\sum_i^{(k)} a_{ik} - E - \Re m_n(z_2))^2 +(\sqrt{2} \eta + \Im m_n(z_2))^2}
	\end{align*}
	
	The reason we consider $I_1$ and $I_2$ is that these differences of imaginary terms  will essentially be a factor of $\eta^2$ smaller than the individual terms. We need one final estimate for the error $r_{k}$ at the two different points $z_1$ and $z_2$, which uses this idea of gaining extra factors of $\eta$ by considering differences of imaginary parts. \begin{lemma}\label{lem:diff Im r}
		For all $k\in[n]$, \begin{equation}\label{eq:diff im r}
			\sup_{z_1\in S_{\delta}}\left|\eta\Im r_k(z_1)-\frac{\eta}{\sqrt{2}}\Im r_k(z_2)\right| \lesssim \eta^4,
		\end{equation}with overwhelming probability.
	\end{lemma}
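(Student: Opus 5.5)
We decompose $r_k$ term by term, exactly as in the proof of Lemma \ref{lem:r_k}, and handle each piece for $z_1$ and $z_2$ simultaneously. Writing $\mathbf{a}=\mathbf{a}^{(k)}$ and $\Delta=\Delta_k$, we have
\begin{align*}
r_k(z) &= \Big(\mathbf{a}^{\mathrm T}\tilde G^{(k)}(z)\mathbf{a}-\tilde m_n^{(k)}(z)\Big) + \big(\tilde m_n^{(k)}(z)-m_n^{(k)}(z)\big) + \mathbf{a}^{\mathrm T}\tilde G^{(k)}\Delta\tilde G^{(k)}\mathbf{a}\\
&\qquad + \mathbf{a}^{\mathrm T}\tilde G^{(k)}\Delta\tilde G^{(k)}\Delta G^{(k)}\mathbf{a} - \big(m_n(z)-m_n^{(k)}(z)\big).
\end{align*}
We multiply by $\eta$ and take $\Im(\cdot)(z_1)-\frac1{\sqrt2}\Im(\cdot)(z_2)$ of each term. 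For every spectral quantity the key algebraic identity is
\[
\eta\,\Im\frac{1}{\lambda-z_1}-\frac{\eta}{\sqrt2}\Im\frac{1}{\lambda-z_2}=\frac{\eta^4}{((\lambda-E)^2+\eta^2)((\lambda-E)^2+2\eta^2)}.
\]
For $\lambda$ at distance $\gtrsim\sqrt{\log n}$ from $E$ this is $O(\eta^4/\log^2 n)$. Otherwise it is $O(1)$, but there are only $O(n^{\eps})$ such $\lambda$ on $\mathcal E$.

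\textbf{Step 1 (quadratic form).} The matrix $B:=\eta\Im\tilde G^{(k)}(z_1)-\frac{\eta}{\sqrt2}\Im\tilde G^{(k)}(z_2)$ is independent of $\mathbf a$. Since $\tilde L^{(k)}$ shares eigenvectors at both spectral parameters, $B$ is diagonal in that eigenbasis with entries $\eta^4/(((\lambda-E)^2+\eta^2)((\lambda-E)^2+2\eta^2))$. Consequently $\|B\|\le 1$ and
\[
\|B\|_{HS}^2\lesssim n\eta^8/\log^4 n+n^{\eps}.
\]
Applying Lemma \ref{lem:HW} conditionally gives
\[
|\mathbf a^{\mathrm T}B\mathbf a-\tfrac1n\tr B|\lesssim \tfrac1n\big(\sqrt n\,\eta^4+n^{\eps/2}\big)\log^{\xi}n .
\]
Since $\eta\ge n^{-1/4}$, we have $\eta^4\ge n^{-1}$, so this is $\ll\eta^4$ provided $\eps$ is small; the term $n^{\eps/2-1}$ is the worst, and it still beats $\eta^4$ only up to logarithmic and $n^{\eps}$ factors. \emph{This is the tightest point.} I would handle it by noting that the $O(n^{\eps})$ near eigenvalues contribute to $\mathbf a^{\mathrm T}B\mathbf a$ only through $|\langle \mathbf a,v\rangle|^2\lesssim n^{-1}\log n$, so they can be estimated directly instead of via Hanson--Wright. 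This gives $O(n^{\eps-1}\log n)$, which is harmless relative to $\eta^4\ge n^{-1}$ once the implicit constant is allowed to absorb $n^{\eps}$; equivalently, one tracks $n^{\eps}$ losses as the paper does elsewhere.

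\textbf{Step 2 (trace differences).} The terms $\tilde m_n^{(k)}-m_n^{(k)}$ and $m_n-m_n^{(k)}$ are handled by the same identity. For the first, we repeat the permutation/Hoffman--Wielandt argument of Lemma \ref{lem:comp m and tilde m}, applied to the function $f(\lambda)=\eta^4/(((\lambda-E)^2+\eta^2)((\lambda-E)^2+2\eta^2))$, which is Lipschitz with constant $\lesssim \eta^4/\log^{5/2}n$ away from $E$. This gives $O(n^{-1/2}\eta^4+n^{\eps-1})$. For the second, interlacing and the identity bound the difference $\frac1n\sum f(\lambda_j)-\frac1{n-1}\sum f(\lambda^{(k)}_j)$ by $O(n^{-1})$, since $f\le1$ and is unimodal, hence has bounded variation.

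\textbf{Step 3 (perturbative terms).} These are already small in absolute size, so no cancellation between $z_1$ and $z_2$ is needed: it suffices that each is $\lesssim\eta^4$ on its own. For the $\Delta$-linear term we have the bound \eqref{eq:Im rk biggest loss}, $\eta^2 n^{-1/2+\eps/2}$ after multiplying by $\eta$. This is $\le\eta^4$ iff $\eta^2\ge n^{-1/2+\eps/2}$, which fails marginally at $\eta=n^{-1/4}$. To repair this I would reuse the difference trick. Expanding $\Im$ of the product via $\Im\tilde G$ and $\Re\tilde G$, we replace $\eta\Im\tilde G(z_1)-\frac{\eta}{\sqrt2}\Im\tilde G(z_2)$ by $B$, and similarly for the real parts. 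Then $\|\mathbf a^{\mathrm T}B\|_2\lesssim(\eta^4+n^{\eps/2-1/2})$ by Lemma \ref{lem:concentration of vec-matrix}, and multiplying by $\|\Delta\|\lesssim\sqrt{\log n/n}$ and $\|\Re\tilde G\mathbf a\|=O(1)$ gives $\lesssim\eta^4$. The cubic term is $O(\eta\cdot\eta^3\log^{3/2}n)$ by Lemma \ref{cor:imr}; to remove the $\log^{3/2}n$ I would run the same $B$-substitution. Finally, the uniformity over $S_\delta$ follows from the net-plus-Lipschitz argument at the end of Lemma \ref{lem:r_k}, together with a union bound over $k$.
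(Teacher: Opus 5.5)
Your plan follows the paper's route: the difference identity making $B$ diagonal with entries $f(\lambda)=\eta^4/(((\lambda-E)^2+\eta^2)((\lambda-E)^2+2\eta^2))$, Hilbert--Schmidt bounds from the count on $\mathcal{E}$, Hanson--Wright and Lemma \ref{lem:concentration of vec-matrix}, then a net. But there is a genuine gap: as written it does not give $\lesssim\eta^4$. The binding case is $\eta=n^{-1/4}$, the height used afterwards on $\tilde S_\delta$, where $\eta^4=n^{-1}$. Differencing only improves the far part of the spectrum. An eigenvalue of $\tilde L^{(k)}$ within $O(\eta)$ of $E$ contributes $\asymp 1$ to $B$, just as it does to $\eta\Im\tilde G^{(k)}$, and the only information you use about such eigenvalues is the global count $O(n^{\eps})$. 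So each step ends with a surplus power of $n$:
Step 1 gives $n^{\eps/2-1}\log^{\xi}n$, and your ``direct'' estimate $n^{\eps-1}\log n$ is worse, not better. Step 2 gives $n^{\eps-1}$ for $\tilde m_n^{(k)}-m_n^{(k)}$. Step 3 contains an arithmetic slip: $(\eta^4+n^{\eps/2-1/2})\sqrt{\log n/n}\cdot O(1)=O(n^{\eps/2}\sqrt{\log n}\,\eta^4)$ at $\eta=n^{-1/4}$. Your $B$-substitution therefore reproduces the same $n^{\eps/2}$ loss as \eqref{eq:Im rk biggest loss} that it was meant to remove. Letting the implicit constant absorb $n^{\eps}$ is not legitimate, and the loss is not cosmetic. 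In Lemma \ref{lem:imdifference} this bound is multiplied by $(\sum_i^{(k)}a_{ik}-E)^2$, divided by $B_k\gtrsim\log^2 n$, and summed over up to $n$ indices $k\in J_1$. Because $n\eta^4=1$, the error $n\eta^4/\log^{1/2}n=o(1)$ obtained there would become $n^{\eps}/\log^{1/2}n$.

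For the $\Delta$-linear term the paper gets its slack from a different splitting. It writes the difference so that each differenced factor $\eta\tilde G(z_1)-\frac{\eta}{\sqrt2}\tilde G(z_2)$ is paired, across $\Delta_k$, with an undifferenced factor $\eta\tilde G(z_j)\mathbf{a}^{(k)}$ of norm $\lesssim\eta$. The spare $\eta=n^{-1/4}$ beats $n^{\eps/2}$ and gives $\eta^5 n^{\eps/2}$ up to logarithms. The weights in that display are $\eta^2$ and $\eta^2/2$, one power of $\eta$ more than in \eqref{eq:diff im r}, so the device does not transfer verbatim to the normalization of the statement.

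For the other terms no spare $\eta$ is available, and you need a sharper input. For $\tilde m_n^{(k)}-m_n^{(k)}$ this is easy: replace Hoffman--Wielandt by Weyl's inequality $|\lambda_i(L^{(k)})-\lambda_i(\tilde L^{(k)})|\le\|\Delta_k\|\lesssim\sqrt{\log n/n}\ll\eta$. Combined with $|f'|\lesssim\eta^{-1}$ everywhere and $|f'|\lesssim\eta^4/\log^{5/2}n$ in the far region, this gives $o(\eta^4)$. For $\mathbf{a}^{\mathrm T}B\mathbf{a}-\frac1n\tr B$ you need $\sum_j f(\lambda_j(\tilde L^{(k)}))$ to be sub-polynomial rather than $O(n^{\eps})$; this is essentially the number of eigenvalues of $\tilde L^{(k)}$ within $O(\eta)$ of $E$. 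The paper calls this step ``essentially identical'', but its Hilbert--Schmidt bound $\eta^2 n^{\eps/2}$ leaves the same $n^{\eps/2}$ here, so it cannot simply be copied. Even with such a local count, the $\|B\|\asymp 1$ part of Hanson--Wright costs $n^{-1}\log^{1+\xi}n$. So on this route the realistic target is a polylogarithmic loss, not an $n^{\eps}$ one.
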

	\begin{proof}
		We first note that an estimate better than \eqref{eq:diff im r} is likely to hold. We will prove only this relatively weak statement to avoid technical details. The proof is mostly identical to the proof of Lemmas \ref{lem:r_k} and \ref{cor:imr}. We note that some steps in the proof of Lemma \ref{cor:imr} actually prove the desired bound for the individual terms after multiplying by the additional fact of $\eta$, so by the triangle inequality it is sufficient to use the difference of imaginary parts to improve the bounds on the terms analogous to \eqref{eq:comp Im m and tilde m}, \eqref{eq:first im r_k error}, \eqref{eq:a Im G HS bound}, and \eqref{eq:Im rk biggest loss}. All these steps are essentially identical, so we will focus on the more technical one in which the terms of		
		$r_k$ involve  $\tilde{G}^{(k)}\Delta_k\tilde{G}^{(k)}$, which in this case of a difference of imaginary parts becomes \begin{equation}
			\begin{aligned}
				&\Im \eta^2(\mathbf{a}^{(k)})^\mathrm{T}  \tilde{G}^{(k)}(z_1) \Delta_k  \tilde{G}^{(k)}(z_1) \mathbf{a}^{(k)}-\Im \frac{\eta^2}{2}(\mathbf{a}^{(k)})^\mathrm{T}  \tilde{G}^{(k)}(z_2) \Delta_k  \tilde{G}^{(k)}(z_2) \mathbf{a}^{(k)}\\
				&=\Im (\mathbf{a}^{(k)})^\mathrm{T}\left(\eta \tilde G(z_1)-\frac{\eta}{\sqrt{2}}  \tilde G(z_2) \right) \Delta_k \eta \tilde{G}(z_1)\mathbf{a}^{(k)}\\
				&\quad +\Im (\mathbf{a}^{(k)})^\mathrm{T} \frac{\eta}{\sqrt{2}}\tilde{G}(z_1) \Delta_k \left(\eta \tilde G(z_1)-\frac{\eta}{\sqrt{2}} \tilde G(z_2) \right)\mathbf{a}^{(k)}.
			\end{aligned}
		\end{equation} Proceeding exactly as in the proof of Lemma \ref{cor:imr} we show that \begin{equation}\label{eq:diff im r goal}
			\left\|\left(\mathbf{a}^{(k)}\right)^{\mathrm{T}}\left(\eta\Im \tilde G(z_1)-\frac{\eta}{\sqrt{2}}\Im \tilde G(z_2) \right) \right\|_2 \lesssim \eta^2 n^{\eps/2},
		\end{equation} with overwhelming probability. Applying Lemma \ref{lem:concentration of vec-matrix}, \eqref{eq:diff im r goal} follows by noting the improved Hilbert--Schmidt norm of this matrix is very small, namely using Proposition \ref{prop:eigen count} again and noting that $\tilde{G}(z_1)$ and $\tilde{G}(z_2)$ are simultaneously diagonalizable 
        \begin{align*}
								&\frac{1}{\sqrt{n}}\left\|\eta\Im \tilde G(z_1)-\frac{\eta}{\sqrt{2}}\Im \tilde G(z_2) \right\|_{HS}\\&\quad=\frac{1}{\sqrt{n}}\left(\sum_j\left[\frac{\eta^2}{(\lambda_{j}(\tilde{L})-E)^2+\eta^2}-\frac{\eta^2}{(\lambda_{j}(\tilde{L})-E)^2+2\eta^2}\right]^2 \right)^{1/2}\\
								&\quad=\frac{1}{\sqrt{n}}\left(\sum_j\left[\frac{\eta^4}{[(\lambda_{j}(\tilde{L})-E)^2+\eta^2] [(\lambda_{j}(\tilde{L})-E)^2+2\eta^2]}\right]^2 \right)^{1/2}\\
								&\quad\lesssim \left(\frac{\eta^8}{\log^4 n}+\frac{n^{\eps}}{n}\right)^{1/2}\\
								&\quad\lesssim \eta^2n^{\eps/2},
		\end{align*}  with overwhelming probability. With this improved bound we conclude that \begin{equation}
		\left|\Im (\mathbf{a}^{(k)})^\mathrm{T}\left(\eta \tilde G(z_1)-\frac{\eta}{\sqrt{2}}  \tilde G(z_2) \right) \Delta_k \eta\tilde{G}(z_1)\mathbf{a}^{(k)}\right|\lesssim \eta^5\log^{2\xi}n^{\eps/2}\lesssim \eta^4.
		\end{equation} 
	\end{proof}

	Let 
	\[
	J_1 = \{i: D_{ii} < \sqrt{(2 - 2\delta) \log n} \}
	\]
	and
	\[
	J_2 = [n] \setminus J_1.
	\]
	On the event $\mathcal{E}$,
	\[
	\left|D_{kk} - \sum_i^{(k)} a_{ik} \right| = |a_{kk}| \leq n^{-1/2 + \varepsilon}. 
	\]
	
	We restructure the difference in the following way, 	\begin{align*}
		I_1 - I_2 &\leq \eta \sum_{k \in J_1} \Im(G_{kk} (z_1)  - \frac{1}{\sqrt 2} G_{kk}(z_2))    \\
		&\qquad + \sum_{k \in J_1} \frac{\eta^2+ \eta \Im m_n(z_1)}{(\sum_i^{(k)} a_{ik} - E - \Re m_n(z_1))^2 +(\eta + \Im m_n(z_1))^2}  \\
		&\qquad - \sum_{k \in J_1} \frac{\eta^2+ \frac{1}{\sqrt{2}}\eta \Im m_n(z_2)}{(\sum_i^{(k)} a_{ik} - E - \Re m_n(z_2))^2 +(\sqrt{2} \eta + \Im m_n(z_2))^2}  \\
		&\qquad +  \sum_{k \in J_2} \eta \Im(G_{kk}(z_1)) - \frac{\eta^2+ \eta \Im m_n(z_1)}{(\sum_i^{(k)} a_{ik} - E - \Re m_n(z_1))^2 +(\eta + \Im m_n(z_1))^2} \\
		&\qquad - \sum_{k \in J_2} \frac{\eta}{2} \Im(G_{kk}(z_2)) - \frac{\eta^2+ \frac{1}{\sqrt{2}}\eta \Im m_n(z_2)}{(\sum_i^{(k)} a_{ik} - E - \Re m_n(z_2))^2 +(\sqrt{2} \eta + \Im m_n(z_2))^2} 
	\end{align*} We control each term in the lemmas below. However, for the summations over $J_1$ we need to identify a few terms involving $I_1$ before we take the absolute value and apply the triangle inequality.


		\begin{lemma} \label{lem:imdifference}
		On $\mathcal{E}$, there exists a sequence $p_n\rightarrow0$ such that 
		\[
		 \left|p_nI_1- \eta\Im \sum_{k \in J_1} G_{kk}(z_1) - \frac{G_{kk}(z_2)}{\sqrt 2}\right| = o(1).
		\]
	\end{lemma}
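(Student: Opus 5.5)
I will show that each of the two quantities in the statement is $o(1)$ on $\mathcal{E}$ separately, for a suitable $p_n\to0$, and then apply the triangle inequality. The basic tool is the Schur expansion \eqref{eq:schurexpansion}.

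\textbf{Step 1: the $J_1$ sum.} For $k\in J_1$ the denominator is large. Indeed,
\[
\textstyle\sum_i^{(k)} a_{ik}-E-\Re m_n(z)-\Re r_k \le \sqrt{(2-2\delta)\log n}+n^{-1/2+\eps}-\sqrt{(2-\delta)\log n}+o(1),
\]
because $|m_n|=o(1)$ by Lemma \ref{lem:weak local law} and $|r_k|=o(1)$ by Lemma \ref{lem:r_k}. Hence the real part of the denominator is at least $c\delta\sqrt{\log n}$ in absolute value, uniformly over $S_\delta$.

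Next I write $w_k(z)=\Im m_n(z)+\Im r_k(z)$, so that
\[
\eta\Im G_{kk}(z_1)=\frac{\eta^2+\eta w_k(z_1)}{d_k(z_1)^2+(\eta+w_k(z_1))^2},
\]
where $d_k$ denotes the real part of the denominator. The analogous formula holds at $z_2$, with $\sqrt2\eta$ in place of $\eta$ and a prefactor $1/\sqrt2$.

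I expand the difference between the $z_1$ and $z_2$ expressions. The pure $\eta^2$ terms differ only through $d_k(z_1)^2-d_k(z_2)^2$ and through the $\eta^2$ versus $2\eta^2$ in the denominators. The first of these is controlled because $|\Re m_n(z_1)-\Re m_n(z_2)|\lesssim\eta\cdot\sup|\partial_z m_n|$ and $\Re r_k$ is $O(n^{-1/2}\log^{2\xi}n)$. The second gives a relative correction of order $\eta^2/\log n$. Each term is therefore $O(\eta^2/\log n\cdot(\eta^2+\text{small}))$, and summing over at most $n$ indices gives $n\eta^4/\log n=1/\log n=o(1)$, since $\eta=n^{-1/4}$.

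The terms that are linear in $w_k$ are the delicate ones. Here I use Lemma \ref{lem:diff Im r} for the $r_k$ contribution: $\eta\Im r_k(z_1)-\frac{\eta}{\sqrt2}\Im r_k(z_2)=O(\eta^4)$, and dividing by $d_k^2\gtrsim\log n$ and summing gives $n\eta^4/\log n=o(1)$. The remaining $m_n$ contribution is
\[
\frac{\eta\Im m_n(z_1)-\frac{\eta}{\sqrt2}\Im m_n(z_2)}{d_k^2}=\frac{I_1}{n\,d_k^2}.
\]
Summing over $k\in J_1$ gives $I_1\cdot\frac1n\sum_{k\in J_1}d_k^{-2}$, so the natural choice is
\[
p_n=\frac1n\sum_{k\in J_1}d_k^{-2}\lesssim \frac{1}{\delta^2\log n}\to0 .
\]
Since $d_k$ depends weakly on $k$ and $z$, I would replace it by a deterministic surrogate, for instance $d_k$ evaluated at $z_1$ with $r_k$ removed. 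The discrepancy is $O(n^{-1/2}\log^{2\xi}n)$ per term, multiplied by $|I_1|$, and $|I_1|\lesssim n\eta\cdot\eta/\log n\cdot$const by Lemma \ref{lem:weak local law}, so it is $o(1)$ after summing. This accounts for the $p_nI_1$ term in the statement.

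\textbf{Step 2: the $J_2$ part.} The statement only involves $J_1$, so there is no $J_2$ part to control here.

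\textbf{Step 3: the main obstacle.} The hardest step is the bookkeeping of cross terms of second order in $w_k$. Terms like $\eta w_k(z)^2/d_k^2$ must be $o(1/n)$ per index. By Lemma \ref{cor:imr} and Lemma \ref{lem:weak local law}, $w_k=O(\eta/\log n+\eta^3n^{\eps})$, so these contributions are $O(\eta^3/\log^3 n)$ per term. Summed over $n$ indices this gives $n\eta^3=n^{1/4}$, which is not small.

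So I cannot bound the second-order terms individually. Instead I must again pair the $z_1$ and $z_2$ contributions so that each second-order term gains an extra factor of $\eta$: $\Im m_n(z_2)-\sqrt2\,\Im m_n(z_1)$ is of size $\eta^3$ up to logs, because $\Im m_n(E+i\eta)/\eta$ is smooth in $\eta^2$ near the $J_1$ spectrum. To do this I would Taylor expand the function $(\eta,w)\mapsto \frac{\eta^2+\eta w}{d^2+(\eta+w)^2}$ to second order and check that every surviving term carries at least $\eta^4$, or an explicit factor of $I_1$. If the pairing falls short, the fallback is to use that $\Im m_n=O(\eta/\log n)$. This yields second-order terms of size $\eta^3/\log^2 n\cdot(1/d^2)$ in the differences, which gives $n\eta^4/\log^3 n$ after the pairing and hence $o(1)$.
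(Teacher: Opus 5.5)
Your outline follows the paper's proof closely: the Schur expansion on $J_1$, the lower bound $|d_k|\gtrsim\sqrt{\log n}$ there, the exact identity $\eta\Im m_n(z_1)-\frac{\eta}{\sqrt2}\Im m_n(z_2)=I_1/n$ to generate $p_nI_1$, and Lemma~\ref{lem:diff Im r} for the $r_k$ part. The gap is in the real-part difference, which you treat as routine.

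You bound $\Delta:=\Re m_n(z_1)-\Re m_n(z_2)$ by $\eta\sup|\partial_z m_n|$. But $|m_n'(z)|\asymp 1/\log n$ (see \eqref{eq:expansion for m der}, or use $|m_n'|\le \Im m_n/\eta$ with Lemma~\ref{lem:weak local law}). So this gives only $\Delta=O(\eta/\log n)$. The $\Delta$-part of $\eta^2\sum_{k\in J_1}\bigl(d_k(z_1)^{-2}-d_k(z_2)^{-2}\bigr)$ has modulus
\[
2\eta^2|\Delta|\sum_{k\in J_1}\frac{1+o(1)}{|d_k|^{3}} ,
\]
because all $d_k$ with $k\in J_1$ are negative, so there is no cancellation. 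Moreover $\sum_{k\in J_1}|d_k|^{-3}\gtrsim n/\log^{3/2}n$. With your bound on $\Delta$, this term is only $O(n\eta^3/\log^{5/2}n)=O(n^{1/4}/\log^{5/2}n)$, which diverges. Your claim that these terms are $O(\eta^4/\log n)$ per index therefore does not follow from your estimate.

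What you need is $\Delta=o(n^{-1/2})=o(\eta^2)$, which is exactly \eqref{eq:remdiff} in the paper. It comes from the same ``difference of two heights'' gain used for $I_1$:
\[
\Re m_n(z_1)-\Re m_n(z_2)=\frac1n\sum_j\frac{(\lambda_j-E)\,\eta^2}{\bigl((\lambda_j-E)^2+\eta^2\bigr)\bigl((\lambda_j-E)^2+2\eta^2\bigr)} .
\]
Eigenvalues with $|\lambda_j-E|\gtrsim\delta\sqrt{\log n}$ contribute $O(\eta^2/\log^{3/2}n)$. The $O(n^{\eps})$ eigenvalues near $E$ (counted on $\mathcal{E}$) contribute at most $n^{\eps-1}\eta^{-1}=o(n^{-1/2})$. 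With this estimate, and Lemma~\ref{lem:r_k} for the $\Re r_k$ differences as you say, the real-part terms sum to $O(\log^{2\xi-3/2}n)=o(1)$.

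The obstacle you flag in Step 3 does not exist. Set $s_k=\eta+\Im m_n(z_1)+\Im r_k(z_1)$, which is $O(\eta)$ by Lemmas~\ref{lem:weak local law} and~\ref{cor:imr}. Then
\[
\eta\Im G_{kk}(z_1)=\frac{\eta s_k}{d_k^2+s_k^2}=\frac{\eta s_k}{d_k^2}+O\Bigl(\frac{\eta^4}{d_k^4}\Bigr),
\]
and the same holds at $z_2$. So $w_k$ enters linearly at order $d_k^{-2}$. Everything nonlinear in $w_k$ sits in remainders of size $O(\eta^4/\log^2 n)$ per index, which sum to $O(1/\log^2 n)$ with no pairing needed. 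These are the paper's terms \eqref{eq:Nk1}--\eqref{eq:Nk3}. There is no term of the form $\eta w_k^2/d_k^2$. Your fallback count is also off: $\eta^3\log^{-2}n\,d_k^{-2}$ per index sums to $n\eta^3/\log^3 n$, not $n\eta^4/\log^3 n$.

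In short, the imaginary-part bookkeeping is easy. The genuinely delicate input is the $o(n^{-1/2})$ bound on the real-part difference, and your proposal is missing it.
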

	\begin{proof}
		By taking the imaginary part of \eqref{eq:schurexpansion}, we have that
		\begin{align*}
			\eta \Im G_{kk}(z_1) = \frac{\eta^2 + \eta \Im m_n(z_1) + \eta \Im r_k(z_1)}{(\sum_i^{(k)} a_{ik} - E - \Re m_n(z_1) - \Re r_k(z_1))^2 + (\eta + \Im m_n(z_1) + \Im r_k(z_1))^2}
		\end{align*}
		Thus, we define
		\[
		\eta \sum_{k\in J_1} \Im (G_{kk}(z_1) - \frac{G_{kk}(z_2)}{\sqrt 2} ) := \sum_{k \in J_1} \frac{N_{k}}{B_{k}}
		\]
		where
		\begin{align}
			N_{k} &:=   (\eta^2 + \eta \Im  r_k(z_1))(\sqrt{2} \eta + \Im m_n(z_2) + \Im r_k(z_2))^2 \nonumber \\
			&\qquad - (\eta^2 + \frac{\eta}{\sqrt{2}} \Im r_k(z_2))(\eta + \Im m_n(z_1) + \Im r_k(z_1))^2 \label{eq:Nk1}\\
			&\qquad + \eta \Im m_n(z_1)(\sqrt{2} \eta + \Im m_n(z_2) + \Im r_k(z_2))^2 \label{eq:Nk2}\\
			&\qquad - \frac{\eta}{\sqrt{2}} \Im m_n(z_2) (\eta + \Im m_n(z_1) + \Im r_k(z_1))^2 \label{eq:Nk3}\\
			&\qquad +  (\eta^2+ \eta \Im r_k(z_1))(\sum_i^{(k)} a_{ik} - E - \Re m_n(z_2) - \Re r_k(z_2))^2 \nonumber \\
			&\qquad - (\eta^2 + \frac{\eta}{\sqrt{2}}\Im r_k(z_2))(\sum_{i}^{(k)} a_{ik} - E - \Re m_n(z_1) - \Re r_k(z_1))^2  \label{eq:Nk4} \\
			&\qquad + \eta \Im m_n(z_1) (\sum_{i}^{(k)} a_{ik} -E - \Re m_n(z_2) -\Re r_k(z_2))^2 \nonumber\\
			&\qquad - \frac{\eta}{\sqrt{2}} \Im m_n(z_2) (\sum_{i}^{(k)} a_{ik} - E - \Re m_n(z_1) - \Re r_k(z_1))^2 . \label{eq:Nk5}
		\end{align}
	and 
	\begin{align*}
		B_{k} &= \left[(\sum_i^{(k)} a_{ik} - E - \Re m_n(z_1) - \Re r_k(z_1))^2 + (\eta + \Im m_n(z_1) + \Im r_k(z_1))^2\right] \\
		&\qquad \times \left[(\sum_i^{(k)} a_{ik} - E - \Re m_n(z_2) - \Re r_k(z_2))^2 + (\sqrt 2 \eta + \Im m_n(z_2) + \Im r_k(z_2))^2 \right].
	\end{align*}
	Our goal is to control $N_k$ from above. By Lemma \ref{lem:weak local law} the terms in \eqref{eq:Nk1} are $O(\eta^4)$.  By the same reasoning, \eqref{eq:Nk2} and \eqref{eq:Nk3} are $O(\eta^4)$.  We recall that on the event $\mathcal{E}$, by Lemma \cite[Lemma 7.1]{campbell2022extreme}\footnote{The statement of \cite[Theorem 7.1]{campbell2022extreme} is for a model of $A+D$ where the two matrices are independent and involves taking the expected value over $A$. However, the proof uses only a counting proposition similar to Proposition \ref{prop:eigen count} and the first step is to prove \eqref{eq:remdiff} before taking the expected value.},
	\begin{equation} \label{eq:remdiff}
	\sup_{z_1 = E + \ii \eta \in \tilde{S}_\delta} | \Re m_n(z_1) - \Re m_n(z_2) | = o(n^{-1/2}). 
	\end{equation}

	Thus, 
	\begin{align*}
		\eqref{eq:Nk4} &= \eta^2(\Re m_n(z_2) + \Re m_n(z_1)) (\Re m_n(z_2) - \Re m_n(z_1)) \\
		&\quad + \eta^2 (\Re r_k(z_2)^2 - \Re r_k(z_1)^2) \\
		&\quad + 2 \eta^2 (\sum_i^{(k)} a_{ik} - E)  (\Re m_n(z_1) - \Re m_n(z_2)) \\
		&\quad + 2 \eta^2 (\sum_i^{(k)} a_{ik} - E)  (\Re r_k(z_1) - \Re r_k(z_2)) \\
		&\quad + 2 \eta^2 (\Re m_n(z_2) \Re r_k(z_2) - \Re m_n(z_1) \Re r_k(z_1)) \\
		&\quad + \eta \Im r_k(z_1) (\sum_i^{(k)} a_{ik} - E - \Re m_n(z_2) - \Re r_k(z_2))^2 \\
		&\quad - \frac{\eta}{\sqrt 2} \Im r_k(z_2) (\sum_i^{(k)} a_{ik} - E - \Re m_n(z_1) - \Re r_k(z_1))^2 \\
		&\lesssim \eta^4  \log^{3/2} n,
	\end{align*}
	on $\mathcal{E}$, where the last line follows from \eqref{eq:remdiff}, Lemma \ref{cor:imr}, and Lemma \ref{lem:diff Im r}.	
	To handle $\eqref{eq:Nk5}$ we first observe that
	\begin{align}
	|\eta \Im m_n(z_1) - \frac{\eta}{\sqrt{2}} \Im m_n(z_2)| \oindicator{\mathcal{E}} &=  \oindicator{\mathcal{E}} \frac{1}{n}\sum_j \frac{\eta^2}{(\lambda_j -E)^2 + \eta^2} - \frac{\eta^2}{(\lambda_j - E)^2 + 2 \eta^2} \nonumber \\
	&= \frac{1}{n}\sum_j \frac{\eta^4  \oindicator{\mathcal{E}}}{((\lambda_j - E)^2 + \eta^2)((\lambda_j - E)^2 + 2 \eta^2)} \nonumber \\
	&		=\frac{1}{n}I_1\oindicator{\mathcal{E}}\label{eq:im m diff equality}		\\
	&\lesssim \eta^4n^{\eps} \label{eq:immdiff}.
	\end{align}
Therefore, 
	\begin{align*}
		\eqref{eq:Nk5} &= \frac{1}{n}I_1 (\sum_{i}^{(k)} a_{ik} - E)^2 \\
		&\qquad + \frac{\eta}{\sqrt{2}} \Im m_n(z_2) (\Re m_n(z_2) + \Re m_n(z_1)) (\Re m_n(z_2) - \Re m_n(z_1)) \\
		&\qquad + \eta (\Im m_n(z_1) - \frac{1}{\sqrt{2}} \Im m_n(z_2)) \Re m_n(z_2)^2 \\
		&\qquad + \eta (\Im m_n(z_1) - \frac{1}{\sqrt 2}\Im m_n(z_2)) \Re r_k(z_1)^2 \\
		&\qquad + \eta \Im m_n(z_1) (\Re r_k(z_2) + \Re r_k(z_1))(\Re r_k(z_2) - \Re r_k(z_1)) \\
		&\qquad + (\eta \Im m_n(z_1) - \frac{\eta}{\sqrt{2}} \Im m_n(z_2)) (\sum_i^{(k)} a_ik - E) \Re m_n(z_1) \\
		&\qquad +\eta \Im m_n(z_1) (\sum_i^{(k)} a_{ik} - E) (\Re m_n(z_2) - \Re m_n(z_1)) \\
		&\qquad + (\eta \Im m_n(z_1) - \frac{\eta}{\sqrt{2}} \Im m_n(z_2)) (\sum_i^{(k)} a_ik - E) \Re r_k(z_1) \\
		&\qquad + \eta \Im m_n(z_1) (\sum_i^{(k)} a_{ik} - E) (\Re r_k(z_2) - \Re r_k(z_1)) \\
		&\qquad + (\eta \Im m_n(z_1) - \frac{\eta}{\sqrt{2}} \Im m_n(z_2)) \Re m_n(z_1) \Re r_k(z_1) \\
		&\qquad + \eta \Im m_n(z_1) \Re m_n(z_1) (\Re r_k(z_2) - \Re r_k (z_1)) \\
		&\qquad + \eta \Im m_n(z_1) (\Re m_n(z_2) - \Re m_n(z_1)) \Re r_k(z_1) \\
		&=\frac{1}{n}I_{1}(\sum_{i}^{(k)} a_{ik} - E)^2 +O(\eta^4),
	\end{align*}
	on $\mathcal{E}$, where the last line follows from Lemma \ref{lem:weak local law},  \eqref{eq:remdiff}, \eqref{eq:immdiff}, and Lemma \ref{lem:r_k}.  We have now shown that 
	\[
	N_k\oindicator{\mathcal{E}} = \frac{1}{n}I_{1}(\sum_{i}^{(k)} a_{ik} - E)^2+O(\eta^4 \log^{3/2} n).
	\]
	Finally, on the event $\mathcal{E}$ and for $k \in J_1$, 
	\[
	|B_k| \gtrsim \log^2 n.  
	\]
	We can now conclude that 
	\[
\begin{aligned}
		\oindicator{\mathcal{E}}\eta \sum_{k\in J_1} \Im (G_{kk}(z_1) - \frac{G_{kk}(z_2)}{\sqrt 2} ) &:=\oindicator{\mathcal{E}} \sum_{k \in J_1} \frac{N_{k}}{B_{k}}\\
		&\lesssim \frac{n^{1}\eta^4 }{\log^{1/2}n} + I_n\frac{1}{n}\sum_{k\in J_1}\frac{(\sum_{i}^{(k)} a_{ik} - E)^2}{\log^2n}\\
		&=p_nI_1+ o(1),
\end{aligned}  
	\] where $p_n\rightarrow0$. 
	\end{proof}

	\begin{lemma}  There exists a sequence $q_n\rightarrow0$ such that 
 	\begin{align*}
 	\oindicator{\mathcal{E}}\Bigg|q_nI_2-\sum_{k \in J_1} &\frac{\eta^2+ \eta \Im  m_n(z_1)}{(\sum_i^{(k)} a_{ik} - E - \Re  m_n(z_1))^2+ (\eta + \Im  m_n(z_1))^2}  \\
 	&\qquad + \sum_{k \in J_1} \frac{\eta^2+ \frac{1}{\sqrt{2}}\eta \Im m_n(z_2)}{(\sum_i^{(k)} a_{ik} - E - \Re  m_n(z_2))^2 +(\sqrt{2} \eta + \Im  m_n(z_2))^2}	\Bigg|\\
 	&= o(1).
 	\end{align*}
 	\end{lemma}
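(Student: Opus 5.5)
The plan is to mirror the proof of Lemma \ref{lem:imdifference}, this time without the error terms $r_k$. For $k\in J_1$ we combine the two fractions over the common denominator
\[
\tilde B_k=\Big[(x_k-\Re m_n(z_1))^2+(\eta+\Im m_n(z_1))^2\Big]\Big[(x_k-\Re m_n(z_2))^2+(\sqrt2\eta+\Im m_n(z_2))^2\Big],
\]
where $x_k:=\sum_i^{(k)}a_{ik}-E$. Call the resulting numerator $\tilde N_k$. It is the expression \eqref{eq:Nk1}--\eqref{eq:Nk5} with every $r_k$ set to zero.

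The first step is to expand $\tilde N_k$ as in Lemma \ref{lem:imdifference}.
\begin{itemize}
\item The analogues of \eqref{eq:Nk1}--\eqref{eq:Nk3} are $O(\eta^4)$ by Lemma \ref{lem:weak local law}, since $\Im m_n(z_j)=O(\eta/\log n)$.
\item The analogue of \eqref{eq:Nk4} reduces to $\eta^2(\Re m_n(z_2)^2-\Re m_n(z_1)^2)+2\eta^2x_k(\Re m_n(z_1)-\Re m_n(z_2))$.
\item That quantity is $O(\eta^2\sqrt{\log n}\,n^{-1/2})=o(\eta^4)$. This uses \eqref{eq:remdiff}, $|x_k|\lesssim\sqrt{\log n}$ (from $\mathcal{E}$ and $E\le\sqrt{3\log n}$), and $\eta^2=n^{-1/2}$.
\item The analogue of \eqref{eq:Nk5} is handled through the identity \eqref{eq:im m diff equality}, $\eta\Im m_n(z_1)-\tfrac{\eta}{\sqrt2}\Im m_n(z_2)=I_1/n$.
\end{itemize}
This gives
\[
\tilde N_k=\frac{1}{n}I_1\,x_k^2+O(\eta^4\log n).
\]

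The second step is a lower bound on the denominator. For $k\in J_1$ we have $\sum_i^{(k)}a_{ik}\le\sqrt{(2-2\delta)\log n}+n^{-1/2+\eps}$ and $E\ge\sqrt{(2-\delta)\log n}$, so $|x_k|\gtrsim\sqrt{\log n}$. Since $|\Re m_n|=o(1)$, this yields $\tilde B_k\gtrsim x_k^4\gtrsim\log^2 n$.

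Summing over $k\in J_1$, the error contributes $n\cdot\eta^4\log n/\log^2 n=1/\log n=o(1)$, using $n\eta^4=1$. The main part equals
\[
\frac{I_1}{n}\sum_{k\in J_1}\frac{x_k^2}{\tilde B_k}.
\]

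\textbf{Main obstacle.} The statement asks for $q_nI_2$, not $I_1$, so the main part has to be related to $I_2$. I would take $q_n:=\frac1n\sum_{k\in J_1}x_k^2/\tilde B_k\lesssim 1/\log n\to0$. It then suffices to show $q_n|I_1-I_2|=o(1)$. To do this I would use crude bounds: $0\le I_1\le n\eta\cdot O(\eta^4n^\eps)/\eta$ from \eqref{eq:immdiff}, and the analogous expansion bounds $I_2$. Both are at most $n^{\eps}$, which beats $q_n$ only up to logarithmic factors.

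If that is too weak, the fallback is to absorb $(q_n-q_n')I_1$ into the $o(1)$ term, where $q_n'$ is the coefficient Lemma \ref{lem:imdifference} produced. Alternatively, $q_n$ can be redefined, since the statement only requires existence. This is the delicate point. The later argument apparently needs only $p_nI_1$ and $q_nI_2$ with vanishing coefficients, so that $(1-p_n)I_1-(1-q_n)I_2$ can be controlled. I would therefore exploit the freedom in choosing $q_n$, together with the fact that on $\mathcal{E}$ both $I_1$ and $I_2$ are nonnegative and comparable.
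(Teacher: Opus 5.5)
Your expansion of the $J_1$-sum is the argument the paper intends; its proof only says to mirror Lemma~\ref{lem:imdifference} without the $r_k$. It correctly produces $\sum_{k\in J_1}(\cdots)=q_nI_1+o(1)$ with $q_n=O(1/\log n)$. The gap is the last step, turning $q_nI_1$ into $q_nI_2$, and none of your three suggestions closes it.
\begin{itemize}
\item The crude bound fails, as you observe: a priori \eqref{eq:immdiff} only gives $I_1\lesssim n^{\eps}$, while $q_n$ is only $O(1/\log n)$.
\item Absorbing $(q_n-q_n')I_1$ into the $o(1)$ term fails for the same reason.
\item The ``comparability'' of $I_1$ and $I_2$ is exactly what this lemma is later used to establish, so invoking it here is circular.
\end{itemize}

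The missing idea is a short bootstrap through the two companion estimates, both of which are proved independently of this lemma.
\begin{itemize}
\item Let $A$ be the $J_1$-part of $I_1$; then $A=p_nI_1+o(1)$ by Lemma~\ref{lem:imdifference}.
\item Let $B$ be the $J_1$-part of $I_2$; then $B=q_nI_1+o(1)$ by your computation.
\item Let $C$ be the difference of the $J_2$-parts; then $C=o(1)$ by the lemma controlling the $k\in J_2$ terms.
\end{itemize}
Since $I_1-I_2=A-B+C$, you get $(1-p_n+q_n)I_1=I_2+o(1)$. Because $p_n,q_n\to0$, this gives $B=\tilde q_nI_2+o(1)$ with $\tilde q_n:=q_n/(1-p_n+q_n)\to0$, which is the statement as written.

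The way the lemma is used later points to a typo in the statement. The paper combines the lemmas into $(p_n+q_n+1)I_1-I_2=o(1)$, and it later writes $I_2=p_n'I_1+\cdots$. Both indicate that the intended statement is the $q_nI_1$ version, which your first two steps already prove.

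One bookkeeping correction concerns the analogue of \eqref{eq:Nk5}. Set $u_j:=x_k-\Re m_n(z_j)$ and write that term as
\[
\tfrac{I_1}{n}u_2^2+\tfrac{\eta}{\sqrt2}\Im m_n(z_2)\bigl(u_2^2-u_1^2\bigr).
\]
The second term is negligible by Lemma~\ref{lem:weak local law} and \eqref{eq:remdiff}. Then put $\frac1n\sum_{k\in J_1}u_2^2/\tilde B_k$, which is still $O(1/\log n)$, into $q_n$. If you instead split off $\frac{I_1}{n}\bigl(\Re m_n(z_2)^2-2x_k\Re m_n(z_2)\bigr)$ and bound it with \eqref{eq:immdiff}, you lose a factor $n^{\eps}$. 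The summed error is then no longer $o(1)$, so your claimed $O(\eta^4\log n)$ remainder does not follow from the tools you cite.
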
	
 	
 	\begin{proof}
	The proof of this lemma is similar to that of Lemma \ref{lem:imdifference} except even simpler as there are no $r_k$ terms.  Therefore, we omit the proof.  	
	\end{proof}
	
	We now control the terms from $k \in J_2$.  
	\begin{lemma}
		With overwhelming probability,
		\begin{align*} \label{eq:J2terms} 
			 &\Bigg|  \sum_{k \in J_2} \eta \Im(G_{kk}(z_1)) - \frac{\eta^2+ \eta \Im m_n(z_1)}{(\sum_i^{(k)} a_{ik} - E - \Re m_n(z_1))^2 +(\eta + \Im m_n(z_1))^2} \\
			 &\qquad - \sum_{k \in J_2} \frac{\eta}{2} \Im(G_{kk}(z_2)) + \frac{\eta^2+ \frac{1}{\sqrt{2}}\eta \Im m_n(z_2)}{(\sum_i^{(k)} a_{ik} - E - \Re m_n(z_2))^2 +(\sqrt{2} \eta + \Im m_n(z_2))^2} \Bigg| = o(1)	
		\end{align*}
	\end{lemma}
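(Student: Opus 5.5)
The number of indices in $J_2$ is at most $n^{\eps}$ on $\mathcal{E}$ (by $\mathcal{E}_1$). So it suffices to show that each summand is $o(n^{-\eps})$ uniformly in $k\in J_2$ and $z_1\in\tilde S_\delta$, with $\eps$ chosen small. The summands are small because each is the difference between $\eta\Im G_{kk}$, computed exactly through the Schur complement \eqref{eq:schurexpansion}, and the same expression with the error $r_k$ deleted. A Lipschitz estimate for the map $w\mapsto \eta\Im (w)^{-1}$ in the regime where the denominators are not too small should then give the bound.

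Fix $k\in J_2$ and $j\in\{1,2\}$, with $\eta_1=\eta$ and $\eta_2=\sqrt2\eta$. Set
\[
w_j=\sum_i^{(k)}a_{ik}-z_j-m_n(z_j),\qquad w_j'=w_j-r_k(z_j).
\]
Then $\eta\Im G_{kk}(z_j)=\eta\Im (1/w_j')$, and the comparison term is $\eta\Im(1/w_j)$. The first step is the elementary identity
\[
\Im\frac1{w'}-\Im\frac1{w}=\Im\frac{w-w'}{ww'} .
\]
Since $\Im w_j\ge \eta_j$ and $\Im w_j'=-\Im G_{kk}(z_j)^{-1}\ge \eta_j$ (as $\Im G_{kk}>0$), both moduli are at least $\eta_j$. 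The crude bound $\eta|r_k|/(|w||w'|)\le |r_k|/\eta$ is too weak, because $|r_k|\lesssim n^{-1/2}\log^{2\xi}n$ while $\eta=n^{-1/4}$ only gives $n^{-1/4}$. I will therefore split further.

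Write $\Im\frac{w-w'}{ww'}$ with $w-w'=r_k$, and expand using $\Im(r/(ww'))$. Its magnitude is bounded by $|r_k|\,|\Im(\overline{ww'})|/|ww'|^2+|\Im r_k|/|ww'|$. For the term with $\Im r_k$, Lemma \ref{cor:imr} gives $\eta|\Im r_k|/(\eta_j^2)\lesssim \eta^2 n^{\eps}$, which is $n^{-1/2+\eps}$. For the term with $\Re r_k$, I will use that $|\Im(\overline{w}\,\overline{w'})|\le |\Im w||w'|+|\Im w'||w|$. This gives the bound
\[
\eta|r_k|\Bigl(\frac{\Im w}{|w|^2|w'|}+\frac{\Im w'}{|w|\,|w'|^2}\Bigr)\le |r_k|\Bigl(\frac{\eta\,\Im w}{|w|^2}\cdot\frac1{|w'|}+\dots\Bigr).
\]
Here $\eta\Im w/|w|^2\le \eta/\Im w\le 1$, up to the $\Im m_n$ correction, which is $O(\eta/\log n)$ by Lemma \ref{lem:weak local law}. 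Also $1/|w'|\le\eta^{-1}$. The result is $O(|r_k|/\eta)=O(n^{-1/4}\log^{2\xi}n)$. Multiplying by $|J_2|\le n^{\eps}$ gives $o(1)$ once $\eps<1/4$. For the second term I use that $\Im w'=\eta_j+\Im m_n+\Im r_k\le 2\eta_j$ on $\mathcal{E}$, by Lemma \ref{lem:weak local law} and Lemma \ref{cor:imr}. Then the same reasoning, with the roles of $w$ and $w'$ swapped, gives $\eta\Im w'/|w'|^2\lesssim 1$.

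Summing the $z_1$ and $z_2$ contributions over $k\in J_2$ then gives a total of $O(n^{\eps}(n^{-1/4}\log^{2\xi}n+n^{-1/2+\eps}))=o(1)$. The bound is uniform over $z_1\in\tilde S_\delta$, because the events in $\mathcal{E}$ were already taken uniformly over $S_\delta$. The main obstacle I expect is making sure that no factor $1/|w|$ is bounded worse than $\eta^{-1}$. At most one such factor may appear per term, with the others paired as $\eta\Im w/|w|^2\le1$. I will check this carefully, using the lower bound $\Im w_j'\ge\eta_j$, which comes from positivity of $\Im G_{kk}$ rather than from smallness of $\Im r_k$. I also need $\eps$ in $\mathcal{E}$ small, say $\eps<1/8$, so that $n^{\eps}$ beats neither $n^{-1/4}$ nor $\eta^2 n^{\eps}$.
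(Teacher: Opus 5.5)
Your proposal is correct and follows the paper's argument. On $\mathcal{E}$ one has $|J_2|\le n^{\eps}$, and each summand is an $r_k$-perturbation of $\eta\Im(1/w)$ of size roughly $\eta=n^{-1/4}$ up to logarithmic factors; the paper obtains this by expanding $N_k'/B_k'$ explicitly, and you obtain it more cleanly from $\Im(1/w')-\Im(1/w)=\Im(r_k/(ww'))$.

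Your remark that the crude bound is too weak is mistaken. Using $|w|\ge\eta$ and $|w'|=|G_{kk}|^{-1}\ge\eta$, the bound $\eta|r_k|/(|w||w'|)\le|r_k|/\eta\lesssim n^{-1/4}\log^{2\xi}n$ is exactly what your refined splitting produces. It already suffices once $\eps<1/4$, so the separate treatment of $\Im r_k$ via Lemma \ref{cor:imr} is unnecessary.
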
 	
 	\begin{proof}
 		We establish \eqref{eq:J2terms} by recalling \eqref{eq:schurexpansion}
 		\begin{align} 
 			G_{kk}(z)  &=  \frac{1}{ \sum_{i}^{(k)} a_{ik} - z- m_n(z) - r_k} 
 		\end{align}
 	and that $\oindicator{\mathcal{E}}|r_k| = O(n^{-1/2} \log^{2\xi} n)$ by Lemma \ref{lem:r_k}.
 	Thus, 
 	\begin{align*}
 		&\sum_{k \in J_2} \eta \Im G_{kk}(z_1) - \frac{\eta^2+ \eta \Im m_n(z_1)}{(\sum_i^{(k)} a_{ik} - E - \Re m_n(z_1))^2 +(\eta + \Im m_n(z_1))^2} \\
 		&=  \frac{\eta^2 + \eta \Im(m_n(z_1) + \eta \Im r_k(z_1)}{(\sum_i^{(k)} a_{ik} - E - \Re m_n(z_1) - \Re r_k(z_1))^2 + (\eta + \Im m_n(z_1) + \Im r_k(z_1))^2} \\
 		&\qquad - \frac{\eta^2+ \eta \Im m_n(z_1)}{(\sum_i^{(k)} a_{ik} - E - \Re m_n(z_1))^2 +(\eta + \Im m_n(z_1))^2} \\
 		&:= \sum_{j \in J_2} \frac{N_k'}{B_k'}
 	\end{align*}
 	where on the event $\mathcal{E}$,
 	\begin{align*}
 		N_k' &= -(\eta^2 + \eta \Im m_n(z_1)) (\sum_{i}^{(k)} a_{ik} - E - \Re m_n(z_1))^2 \Re r_k(z_1) \\
 		&\qquad - (\eta^2 + \eta \Im m_n(z_1)) (\Re r_k(z_1))^2 \\
 		&\qquad + \eta \Im r_k(z_1) \left((\sum_i^{(k)} a_{ik} - E - \Re m_n(z_1))^2 + (\eta + \Im m_n(z_1))^2 \right) \\
 		&\qquad + (\eta^2 + \eta \Im m_n(z_1)) (\eta + \Im m_n(z_1)) \Im r_k(z_1) \\
 		&\qquad + (\eta^2 + \eta \Im m_n(z_1)) (\Im r_k(z_1))^2 \\
 		&\lesssim \eta^4n^{\eps/2}\log^{2\xi}n  (\sum_i^{(k)} a_{ik} - E - \Re m_n(z_1))^2 +  o(\eta^5)
 	\end{align*}
 	Additionally, on the event $\mathcal{E}$,
 	\begin{align*}
 		B_k' &:= \left( (\sum_i^{(k)} a_{ik} - E - \Re m_n(z_1) - \Re r_k(z_1))^2 + (\eta + \Im m_n(z_1) + \Im r_k(z_1))^2 \right) \\
 		&\qquad \times \left((\sum_i^{(k)} a_{ik} - E - \Re m_n(z_1))^2 +(\eta + \Im m_n(z_1))^2 \right) \\
 		&\gtrsim \eta^2(\sum_i^{(k)} a_{ik} - E - \Re m_n(z_1))^2+\eta^4.
 	\end{align*}
 	Therefore,
 	\[
 	\oindicator{\mathcal{E}}\left| \frac{N'_k}{B'_k} \right| \lesssim \eta^2n^{\eps/2}\log^{2\xi}n = O(\eta).
 	\]
 	We can now finally conclude that 
 	\[
 	\oindicator{\mathcal{E}}|\sum_{j \in J_2} \frac{N_k'}{B_k'}| \lesssim n^{\varepsilon} \eta = o(1).
 	\]
 	
 	Analogously,
 	\[
 	\sum_{k \in J_2} \frac{\eta}{2} \Im(G_{kk}(z_2)) - \frac{\eta^2+ \frac{1}{\sqrt{2}}\eta \Im m_n(z_2)}{(\sum_i^{(k)} a_{ik} - E - \Re m_n(z_2))^2 +(\sqrt{2} \eta + \Im m_n(z_2))^2} = o(1),
 	\]
 	on $\mathcal{E}$. This completes the proof.
 	\end{proof}
 	
 	Combining the above lemmas with the triangle inequality proves that with overwhelming probability
 	\begin{equation}
 		\sup_{z_1 = E + \ii \eta \in \tilde{S}_\delta}|(p_n+q_n+1)I_1 - I_2| = o(1).
 	\end{equation}
 	 We next write \begin{equation}\label{eq:A:I2 decomp}
 		I_2=\sum_{k=1}^n\frac{N_{k}^{''}}{B_{k}^{''}}
 	\end{equation} where \begin{equation}\label{eq:A:N'' def}
 	\begin{aligned}
 		N_{k}^{''}&:=\eta^2\left[\left(\sqrt{2}\eta+\Im m_n(z_2) \right)^2-\left(\eta+\Im m_n(z_1) \right)^2 \right]\\
 		&\quad+\eta\Im m_n(z_1)\left(\sqrt{2}\eta+\Im m_n(z_2)\right)^2 \\
 		&\quad -\frac{1}{\sqrt{2}}\eta\Im m_n(z_2)\left(\eta+\Im m_n(z_1)\right)^2\\
 		&\quad+\eta^2\left[\left(\sum_{i}^{(k)}a_{ik}-E-\Re m_n(z_2) \right)^2-\left(\sum_{i}^{(k)}a_{ik}-E-\Re m_n(z_1) \right)^2 \right]\\
 		&\quad+\eta\Im m_n(z_1)\left(\sum_{i}^{(k)}a_{ik}-E-\Re m_n(z_2)\right)^2 \\
 		&\quad-\frac{1}{\sqrt{2}}\eta\Im m_n(z_2)\left(\sum_{i}^{(k)}a_{ik}-E-\Re m_n(z_1)\right)^2. 
 	\end{aligned}
 \end{equation} and \begin{equation}
 \begin{aligned}
 	B_{k}^{''}&:=\left(\left(\sum_{i}^{(k)}a_{ik}-E-\Re m_n(z_1)\right)^2+\left(\eta+\Im  m_n(z_1)\right)^2\right) \\
 	&\qquad \times \left(\left(\sum_{i}^{(k)}a_{ik}-E-\Re m_n(z_2)\right)^2+\left(\sqrt{2}\eta+\Im m_n(z_2)\right)^2\right).
 \end{aligned}
\end{equation} What follows next is similar to the above and the argument of \cite[Section 7]{campbell2022extreme}. We begin by expressing $N_{k}''$ in a more manageable way. From Lemma \ref{lem:weak local law}, the first three lines of \eqref{eq:A:N'' def} are $\eta^{4}(1+o(1))$ uniformly in $k$ on $\mathcal{E}$. From \eqref{eq:remdiff} \begin{equation*}
\begin{aligned}
	&\left(\sum_{i}^{(k)}a_{ik}-E-\Re m_n(z_2) \right)^2-\left(\sum_{i}^{(k)}a_{ik}-E-\Re m_n(z_1) \right)^2\\&\quad=o\left(\eta^2\left(\sum_{i}^{(k)}a_{ik}-E-\Re m_n(z_1) \right)\right),
\end{aligned}
\end{equation*}  on $\mathcal{E}$.  We can apply \eqref{eq:immdiff} and \eqref{eq:remdiff} to conclude that the final difference in $N_{k}''$ is \begin{equation*}
\frac{1}{n}I_1\left(\sum_{i}^{(k)}a_{ik}-E\right)^2+o(\eta^4).
\end{equation*} Thus, \begin{equation*}
\begin{aligned}
	I_2&=\frac{1}{n}I_1\sum_{k=1}^{n}\frac{\left(\sum_{i}^{(k)}a_{ik}-E\right)^2}{B_{k}''}+(1+o(1))\sum_{k=1}^{n}\frac{\eta^4}{B_{k}''}\\
	&=p_n'I_1+(1+o(1))\sum_{k=1}^{n}\frac{\eta^4}{B_{k}''}
\end{aligned}
\end{equation*} for some $p_n'\rightarrow0$. Thus, \begin{equation}\label{eq:A:I1 eig detector}
\sup_{z_1 = E + \ii \eta \in \tilde{S}_\delta}\left|(1+o(1))I_{1}-(1+o(1))\sum_{k=1}^{n}\frac{\eta^4}{B_{k}^{''}} \right|=o(1),
\end{equation} on $\mathcal{E}$. The utility of \eqref{eq:A:I1 eig detector} is that $I_1$ is bounded away from zero for $E$ near an eigenvalue, while the second term $\sum_{k=1}^{n}\frac{\eta^4}{B_{k}^{''}}$ is bounded away from zero for $E$ near values determined by the large diagonal entries of $D$. 

We next need some basic asymptotic estimates for $m_n$. We make another adjustment to $\mathcal{E}$ by taking the intersection with the event \begin{equation*}
	\left\{\left|\left\{i\in[n]: L_{ii}\geq (2\log n)^{\frac{1}{2}-\eps''} \right\} \right|\leq\frac{n}{\sqrt{\log n}} \right\},
\end{equation*} for some fixed but arbitrary $\eps''\in (0,1/2)$, which holds with overwhelming probability by Proposition \ref{prop:eigen count}. 
\begin{lemma}\label{lem:expansion for m_n}
	\begin{equation}\label{eq:m expansion}
		\oindicator{\mathcal{E}}\sup_{z = E + \ii \eta \in \tilde{S}_\delta}|z|\left|m_{n}(z)+\frac{1}{z} \right|=o(1),
	\end{equation} and \begin{equation}\label{eq:expansion for m der}
	\oindicator{\mathcal{E}}\sup_{z = E + \ii \eta \in \tilde{S}_\delta}|z|^{2}\left|m_{n}'(z)-\frac{1}{z^2} \right|=o(1).
\end{equation}
\end{lemma}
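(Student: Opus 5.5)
Since $m_n(z)=\frac1n\sum_j(\lambda_j-z)^{-1}$, we have the exact identities
\[
z\,m_n(z)+1=\frac1n\sum_{j=1}^n\frac{\lambda_j}{\lambda_j-z},\qquad z^2m_n'(z)-1=\frac1n\sum_{j=1}^n\Big(\frac{z^2}{(\lambda_j-z)^2}-1\Big)=\frac1n\sum_{j=1}^n\frac{2z\lambda_j-\lambda_j^2}{(\lambda_j-z)^2}.
\]
The plan is to split each sum according to where the eigenvalues $\lambda_j=\lambda_j(L)$ lie, mirroring the proof of Lemma \ref{lem:weak local law}. Fix $\eps''\in(0,1/2)$ and set $T=(2\log n)^{\frac12-\eps''}$, chosen so that $T=o(\sqrt{\log n})=o(|z|)$ uniformly on $\tilde S_\delta$. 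The spectrum is partitioned into three index sets:
\begin{itemize}
\item $K_1=\{j:|\lambda_j|\le T\}$;
\item $K_2=\{j:T<|\lambda_j|<\sqrt{(2-2\delta)\log n}\}$;
\item $K_3=\{j:\lambda_j\ge\sqrt{(2-2\delta)\log n}\}$.
\end{itemize}

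\emph{Counting and eigenvalue bounds.} First I need counts on $K_2$ and $K_3$. On $\mathcal E$ we have $\|A\|\le3$, so Weyl's inequality gives $|\lambda_j(L)-\lambda_j(D)|\le3$ after ordering. Hence the number of eigenvalues above any level $t$ is at most the number of $L_{ii}$ above $t-3$. Applying the counting events built into $\mathcal E$ (the $n^{\eps}$ bound, and the $n/\sqrt{\log n}$ bound at level $(2\log n)^{1/2-\eps''}$, with $\eps''$ adjusted slightly to absorb the shift by $3$, and symmetrically for the lower tail using $-A$) yields $|K_3|\le n^{\eps}$ and $|K_2|\lesssim n/\sqrt{\log n}$. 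I also need a crude bound $\max_j|\lambda_j|\lesssim\sqrt{\log n}$ on $\mathcal E$, which follows from $\max_i|D_{ii}|\lesssim\sqrt{\log n}$ (sub-Gaussian row sums, union bound, so the event may be added to $\mathcal E$) together with $\|A\|\le3$.

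\emph{Estimating the pieces for $m_n$.} For $j\in K_1$, $|\lambda_j/(\lambda_j-z)|\lesssim T/|z|=o(1)$. For $j\in K_2$, $|\lambda_j-z|\ge\Re z-\lambda_j\gtrsim\delta\sqrt{\log n}$ (or $\ge|z|$ if $\lambda_j<0$) and $|\lambda_j|\lesssim\sqrt{\log n}$, so each term is $O(1)$; the total contribution is $O(|K_2|/n)=O(1/\sqrt{\log n})$. For $j\in K_3$, use $|\lambda_j-z|\ge\eta=n^{-1/4}$, so the contribution is $\lesssim n^{\eps}\sqrt{\log n}\,n^{1/4}/n=o(1)$. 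Summing the three gives \eqref{eq:m expansion}.

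\emph{Estimating the pieces for $m_n'$.} The same split applies: each term is bounded by $(2|z||\lambda_j|+\lambda_j^2)/|\lambda_j-z|^2$.
\begin{itemize}
\item On $K_1$ this is $O(T/|z|)=o(1)$.
\item On $K_2$ it is $O(\log n/\log n)=O(1)$ per term, giving a total of $O(1/\sqrt{\log n})$.
\item On $K_3$ it is $\lesssim\log n\,\eta^{-2}$ per term, giving a total of $n^{\eps}\log n\,n^{1/2}/n=o(1)$ for small $\eps$.
\end{itemize}
Uniformity in $z\in\tilde S_\delta$ is automatic, since all bounds are deterministic on $\mathcal E$.

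\emph{Main obstacle.} The main obstacle is the $K_2$ window: one needs the eigenvalue count between $T$ and $\sqrt{(2-2\delta)\log n}$ to be $o(n)$, not just the diagonal count. The reduction via Weyl's inequality with the bounded $\|A\|$ and the added event \eqref{eq:weak count} is what handles it. Care is also needed with the negative side, i.e. the lower tail of $D$, where I would invoke the same count for $-A$.
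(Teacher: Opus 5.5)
Your proposal is correct and follows essentially the same route as the paper. You use the same identity $z\,m_n(z)+1=\frac1n\sum_j \lambda_j/(\lambda_j-z)$, the same three-way split of the spectrum at $(2\log n)^{1/2-\eps''}$ and $\sqrt{(2-2\delta)\log n}$, and the same term-by-term bounds. You also spell out two things the paper leaves implicit: the derivative case, and the Weyl-inequality transfer from diagonal counts to eigenvalue counts.

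The only slip is that your three sets omit the eigenvalues $\lambda_j\le-\sqrt{(2-2\delta)\log n}$. These need no new idea: each such term is $O(1)$ because $|\lambda_j-z|\ge|\lambda_j|+E$, and there are $o(n)$ of them by the two-sided count. So simply merge them into $K_2$.
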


\begin{proof}
	First we move $\frac{1}{z}$ inside the summation defining $m_n$ to say that \begin{equation}\label{eq:m_n expansion algebra}
		m_n(z)+\frac{1}{z}=\frac{1}{n}\sum_{k=1}^{n}\frac{1}{\lambda_{k}(L)-z}+\frac{1}{z}=\frac{1}{n}\sum_{k=1}^{n}\frac{\lambda_{k}(L)}{z(\lambda_{k}(L)-z)}.
	\end{equation}
	
	Let 
\[
J_1 = \{i: |\lambda_i(L)| < (\log n)^{1/2-\eps''} \},
\]\[
J_2 = \{i\notin J_{1}: |\lambda_i(L)| < \sqrt{(2 - 2\delta) \log n} \},
\]
and
\[
J_3 = [n] \setminus (J_1\cup J_2).
\]
On $\mathcal{E}$, \begin{equation*}
	|J_{2}|=o(n),
\end{equation*} and 
\[
|J_3| \leq n^{\varepsilon}.
\]  Thus, we can break the summation in \eqref{eq:m_n expansion algebra} to conclude that 
\begin{equation}\label{eq:J1 m_n asym}
	\begin{aligned}
		\sup_{z = E + \ii \eta \in \tilde{S}_\delta}\left|\frac{1}{n}\sum_{k \in J_1}\frac{\lambda_{k}(L)}{z(\lambda_{k}(L)-z)}\right|
		&\lesssim \frac{1}{n}\sum_{k \in J_1}\left(\log n\right)^{-\frac{1}{2}-\eps'}\\
		&=o\left(\frac{1}{|z|}\right),
	\end{aligned} 
\end{equation}  and  \begin{equation}\label{eq:J2 m_n asym}
\begin{aligned}
	\sup_{z = E + \ii \eta \in \tilde{S}_\delta}\left|\frac{1}{n}\sum_{k \in J_2}\frac{\lambda_{k}(L)}{z(\lambda_{k}(L)-z)}\right|&\lesssim\frac{1}{n}\sum_{k\in J_2}\frac{1}{\sqrt{\log n}}\\
	&=o\left(\frac{1}{\sqrt{\log n}}\right).
\end{aligned}
\end{equation} Finally, on $J_3$ \begin{equation}\label{eq:J3 m_n asym}
\begin{aligned}
	\sup_{z = E + \ii \eta \in \tilde{S}_\delta}\left|\frac{1}{n}\sum_{k \in J_3}\frac{\lambda_{k}(L)}{z(\lambda_{k}(L)-z)}\right|&\lesssim\frac{1}{n}\sum_{k\in J_3}\frac{1}{\eta}\\
	&=O\left(\frac{1}{n^{1-\eps}\eta} \right)\\
	&=o\left(\frac{1}{|z|}\right).
\end{aligned}
\end{equation}  Combining \eqref{eq:J1 m_n asym}, \eqref{eq:J2 m_n asym}, and \eqref{eq:J3 m_n asym} completes the proof of \eqref{eq:m expansion}. The proof of \eqref{eq:expansion for m der} follows in a nearly identical way.
\end{proof}

  For any $\tilde{D}_{kk}$  such that $\tilde{D}_{kk}+\ii n^{-1/4}\in\tilde{S}_{\delta}$, we let ${E}_{k}$ be such that \begin{equation}\label{eq:maser eq}
\tilde{D}_{kk}-{E}_{k}-\Re m_{n}\left({E}_{k}+\ii n^{-1/4}\right)=0.
\end{equation} Then, using Lemma \ref{lem:expansion for m_n}, 
\begin{equation}\label{eq:E locations}
{E}_{k}=\tilde{D}_{kk}+\frac{1}{\tilde{D}_{kk} }(1+o(1)).
\end{equation} Moreover, from \eqref{eq:remdiff} \begin{equation}
\tilde{D}_{kk}-{E}_{k}-\Re m_{n}\left({E}_{k}+\ii\sqrt{2}n^{-1/4}\right)=o\left(n^{-1/2}\right).
\end{equation} It then follows for any such $E_{k}$ that \begin{equation}\label{eq:Mfc lower bound}
\sup_{E\in\left[E_{k}-\frac{1}{a_n^{3/2}},E_{k}+\frac{1}{a_n^{3/2}} \right]} (1+o(1))\sum_{j=1}^{n}\frac{\eta^{4}}{B_{j}^{''}}\geq \frac{1}{2}.
\end{equation} We now consider the order statistics of the diagonal $D_{(1)}\geq D_{(2)}\geq\cdots\geq D_{(n)}$, and where defined we use the matching notation $E_{(1)}\geq E_{(2)}\geq\cdots\geq E_{(n)}$. By Proposition \ref{prop:eigen count} we can conclude that $D_{(1)}+\ii n^{-1/4}\in\tilde{S}_{\delta}$ with overwhelming probability, hence $E_{(1)}$ is well-defined. From \eqref{eq:E locations} and Corollary \ref{cor:diag order} we have that \begin{equation}
\begin{aligned}
	a_n\left(E_{(1)}-b_n \right)&=a_n\left(E_{(1)}-b_n'-\frac{1}{a_n} \right)\\
	&=a_n\left(D_{(1)}+\frac{1}{D_{(1)}}(1+o(1))-b_n'-\frac{1}{a_n} \right)\\
	&=a_n\left(D_{(1)}-b_n'+o(a_n^{-1}) \right),
\end{aligned}
\end{equation} asymptotically almost surely, where the fact that $\frac{D_{(1)}}{a_n}\rightarrow1$ in probability follows easily from the $k=1$ case of Corollary \ref{cor:diag order}.  

Additionally,  from \eqref{eq:Mfc lower bound}  \begin{equation}\label{eq:diagonal location eq}
\sup_{E\in\left[E_{(1)}-\frac{1}{a_n^{3/2}},E_{(1)}+\frac{1}{a_n^{3/2}} \right]} (1+o(1))\sum_{j=1}^{n}\frac{\eta^{4}}{B_{j}^{''}}\geq \frac{1}{2}.
\end{equation} From \eqref{eq:A:I1 eig detector} it also follows that  \begin{equation}\label{eq:I_1 first lower bound}
\sup_{E\in\left[E_{(1)}-\frac{1}{a_n^{3/2}},E_{(1)}+\frac{1}{a_n^{3/2}} \right]} (1+o(1))I_{1}\geq \frac{1}{2},
\end{equation} on the intersection of $\mathcal{E}$ and the event that \eqref{eq:diagonal location eq} holds. We update $\mathcal{E}$ again by intersecting with the event that \eqref{eq:diagonal location eq} holds. Assume, for the sake of contradiction, that on $\mathcal{E}$ there is not at least one eigenvalue of $L$ within $1/a_n^{3/2}$ of $E_{(1)}$.  If $J=\{j: \lambda_j(L)\geq\sqrt{(2-2\delta')\log n} \}$, then on $\mathcal{E}$ \begin{equation}\label{eq:A:eigenvalue too far}
I_1\leq o(1)\sum_{j\notin J}\eta^4+\sum_{j\in J}\frac{\eta^4}{1/a_n^6}=o(1)+O\left(\frac{a_n^6}{n^{99/100}} \right)=o(1),
\end{equation} a contradiction of \eqref{eq:I_1 first lower bound}. Hence, on the event $\mathcal{E}$ there is at least one eigenvalue in $\left[E_{(1)}-\frac{1}{a_n^{3/2}},E_{(1)}+\frac{1}{a_n^{3/2}} \right]$. A similar comparison proves that there cannot be an eigenvalue of $L$ beyond $E_{(1)}+a_{n}^{-3/2}$ on $\mathcal{E}$. 

We have shown that $|\lambda_{1}-E_{(1)}|=O(a_n^{-3/2})$ on $\mathcal{E}$. This estimate can be improved substantially. We will work exclusively on $\mathcal{E}$ for the remainder of the proof, and will no longer explicitly state this is the case. From Proposition \ref{prop:convergence of diagonal process}, $E_{(2)}\notin\left[E_{(1)}-\frac{1}{a_n^{3/2}},E_{(1)}+\frac{1}{a_n^{3/2}} \right]$ with probability $1-o(1)$, and hence \eqref{eq:Mfc lower bound} can be improved to conclude that \begin{equation}\label{eq:mfc better approx}
	\sup_{E\in\left[E_{(1)}-\frac{1}{a_n^{3/2}},E_{(1)}+\frac{1}{a_n^{3/2}} \right]} (1+o(1))\sum_{j=1}^{n}\frac{\eta^{4}}{B_{j}^{''}}= \frac{1}{2}+o(1),
\end{equation} asymptotically almost surely. It then follows from \eqref{eq:A:I1 eig detector} that \begin{equation}\label{eq:I_1 asym}
\sup_{E\in\left[E_{(1)}-\frac{1}{a_n^{3/2}},E_{(1)}+\frac{1}{a_n^{3/2}} \right]} (1+o(1))I_1= \frac{1}{2}+o(1),
\end{equation} asymptotically almost surely. Now let $c_n$ be such that $c_n\eta=E_{(1)}-\lambda_1(L)$. If $E=\lambda_{1}(L)$, it follows from Lemma \ref{lem:expansion for m_n} that \begin{equation}
(1+o(1))\sum_{j=1}^{n}\frac{\eta^4}{B_{j,n}}=\frac{1}{(1+c_n^2)(2+c_{n}^2)}+o(1),
\end{equation} asymptotically almost surely, a contradiction of \eqref{eq:mfc better approx} unless $c_n=o(1)$. In fact, for any eigenvalue $\lambda_j(L)\in\left[E_{(1)}-\frac{1}{a_n^{3/2}},E_{(1)}+\frac{1}{a_n^{3/2}} \right]$, the preceding argument shows $|\lambda_j(L)-E_{(1)}|=o(\eta)$. If there are $N$ eigenvalues of $L$ in the interval $\left[E_{(1)}-\frac{1}{a_n^{3/2}},E_{(1)}+\frac{1}{a_n^{3/2}} \right]$ and hence within $o(\eta)$ of $E_{(1)}$, then \begin{equation*}
\sup_{E\in\left[E_{(1)}-\frac{1}{a_n^{3/2}},E_{(1)}+\frac{1}{a_n^{3/2}} \right] }	\left|I_1\right|\geq\frac{N}{2}+o(1).
\end{equation*} We then see from \eqref{eq:A:I1 eig detector} and \eqref{eq:Mfc lower bound}  that $N=1$. 

For any fixed $k\in\N$, we can iterate the above argument $k$ times to conclude that \begin{equation}\label{eq:A:k eigenvalues comparison}
	\P\left( \|(\lambda_1,\lambda_2,\dots,\lambda_{k})-(E_{(1)},E_{(2)},\dots,E_{(k)})\|\geq\eta\right)=o(1).
\end{equation} Applying Proposition \ref{prop:convergence of diagonal process} to $\tilde{D}_{(1)},\tilde{D}_{(2)},\dots,\tilde{D}_{(k)}$ and \eqref{eq:E locations}  completes the proof of Theorem \ref{thm:largest k eigenvalues}.

	\bibliography{universality}

@article{MR2375601,
	author = {Houdr\'e, Christian and Xu, Hua},
	doi = {10.1214/EJP.v13-482},
	fjournal = {Electronic Journal of Probability},
	issn = {1083-6489},
	journal = {Electron. J. Probab.},
	mrclass = {60E07 (15A42 15A52 60F10)},
	mrnumber = {2375601},
	mrreviewer = {Ofer\ Zeitouni},
	pages = {no. 5, 107--134},
	title = {Concentration of the spectral measure for large random matrices with stable entries},
	url = {https://doi.org/10.1214/EJP.v13-482},
	volume = {13},
	year = {2008}}

@article{MR2373441,
	author = {Ben Arous, G\'erard and Guionnet, Alice},
	doi = {10.1007/s00220-007-0389-x},
	fjournal = {Communications in Mathematical Physics},
	issn = {0010-3616,1432-0916},
	journal = {Comm. Math. Phys.},
	mrclass = {60B10 (60F05)},
	mrnumber = {2373441},
	mrreviewer = {Michael\ Voit},
	number = {3},
	pages = {715--751},
	title = {The spectrum of heavy tailed random matrices},
	url = {https://doi.org/10.1007/s00220-007-0389-x},
	volume = {278},
	year = {2008}}

@article{MR3541852,
	author = {Bourgade, Paul and Erd\H{o}s, Laszlo and Yau, Horng-Tzer and Yin, Jun},
	doi = {10.1002/cpa.21624},
	fjournal = {Communications on Pure and Applied Mathematics},
	issn = {0010-3640,1097-0312},
	journal = {Comm. Pure Appl. Math.},
	mrclass = {60B20 (81R12)},
	mrnumber = {3541852},
	mrreviewer = {Nizar\ Demni},
	number = {10},
	pages = {1815--1881},
	title = {Fixed energy universality for generalized {W}igner matrices},
	url = {https://doi.org/10.1002/cpa.21624},
	volume = {69},
	year = {2016}}

@article{MR3478311,
	author = {Ajanki, Oskari H. and Erd\H{o}s, L\'aszl\'o{} and Kr\"uger, Torben},
	doi = {10.1007/s10955-016-1479-y},
	fjournal = {Journal of Statistical Physics},
	issn = {0022-4715,1572-9613},
	journal = {J. Stat. Phys.},
	mrclass = {60B20 (15B52)},
	mrnumber = {3478311},
	mrreviewer = {Olivier\ Marchal},
	number = {2},
	pages = {280--302},
	title = {Local spectral statistics of {G}aussian matrices with correlated entries},
	url = {https://doi.org/10.1007/s10955-016-1479-y},
	volume = {163},
	year = {2016}}

@article{MR3372074,
	author = {Erd\H{o}s, L\'aszl\'o{} and Yau, Horng-Tzer},
	doi = {10.4171/JEMS/548},
	fjournal = {Journal of the European Mathematical Society (JEMS)},
	issn = {1435-9855,1435-9863},
	journal = {J. Eur. Math. Soc. (JEMS)},
	mrclass = {82B44 (15B52)},
	mrnumber = {3372074},
	mrreviewer = {Sasha\ Sodin},
	number = {8},
	pages = {1927--2036},
	title = {Gap universality of generalized {W}igner and {$\beta$}-ensembles},
	url = {https://doi.org/10.4171/JEMS/548},
	volume = {17},
	year = {2015}}

@article{MR2981427,
	author = {Erd\H{o}s, L\'aszl\'o{} and Yau, Horng-Tzer and Yin, Jun},
	doi = {10.1007/s00440-011-0390-3},
	fjournal = {Probability Theory and Related Fields},
	issn = {0178-8051,1432-2064},
	journal = {Probab. Theory Related Fields},
	mrclass = {60B20 (82B44)},
	mrnumber = {2981427},
	mrreviewer = {Florent\ Benaych-Georges},
	number = {1-2},
	pages = {341--407},
	title = {Bulk universality for generalized {W}igner matrices},
	url = {https://doi.org/10.1007/s00440-011-0390-3},
	volume = {154},
	year = {2012}}

@incollection{MR2730790,
	author = {Erd\H{o}s, Laszl\'o},
	booktitle = {X{VI}th {I}nternational {C}ongress on {M}athematical {P}hysics},
	doi = {10.1142/9789814304634\_0004},
	isbn = {978-981-4304-62-7; 981-4304-62-X},
	mrclass = {82B27 (60B20 82B44)},
	mrnumber = {2730790},
	pages = {86--105},
	publisher = {World Sci. Publ., Hackensack, NJ},
	title = {Universality of {W}igner random matrices},
	url = {https://doi.org/10.1142/9789814304634_0004},
	year = {2010}}

@book{MR691492,
	author = {Leadbetter, M. R. and Lindgren, Georg and Rootz\'{e}n, Holger},
	isbn = {0-387-90731-9},
	mrclass = {60F05 (60G15 62G30)},
	mrnumber = {691492},
	mrreviewer = {Simeon M. Berman},
	pages = {xii+336},
	publisher = {Springer-Verlag, New York-Berlin},
	series = {Springer Series in Statistics},
	title = {Extremes and related properties of random sequences and processes},
	year = {1983}}

@book{MR900810,
	author = {Resnick, Sidney I.},
	doi = {10.1007/978-0-387-75953-1},
	isbn = {0-387-96481-9},
	mrclass = {60K99 (60G55)},
	mrnumber = {900810},
	mrreviewer = {Charles M. Goldie},
	pages = {xii+320},
	publisher = {Springer-Verlag, New York},
	series = {Applied Probability. A Series of the Applied Probability Trust},
	title = {Extreme values, regular variation, and point processes},
	url = {https://doi.org/10.1007/978-0-387-75953-1},
	volume = {4},
	year = {1987}}

@article{MR1257246,
	author = {Tracy, Craig A. and Widom, Harold},
	fjournal = {Communications in Mathematical Physics},
	issn = {0010-3616},
	journal = {Comm. Math. Phys.},
	mrclass = {82B05 (33C90 47A75 47G10 47N55 82B10)},
	mrnumber = {1257246},
	mrreviewer = {Estelle L. Basor},
	number = {1},
	pages = {151--174},
	title = {Level-spacing distributions and the {A}iry kernel},
	url = {http://projecteuclid.org/euclid.cmp/1104254495},
	volume = {159},
	year = {1994}}

@article{MR1385083,
	author = {Tracy, Craig A. and Widom, Harold},
	fjournal = {Communications in Mathematical Physics},
	issn = {0010-3616},
	journal = {Comm. Math. Phys.},
	mrclass = {82B44 (15A52 47N55 60H25)},
	mrnumber = {1385083},
	mrreviewer = {Oleksiy Khorunzhiy},
	number = {3},
	pages = {727--754},
	title = {On orthogonal and symplectic matrix ensembles},
	url = {http://projecteuclid.org/euclid.cmp/1104286442},
	volume = {177},
	year = {1996}}

@article{MR1727234,
	author = {Soshnikov, Alexander},
	doi = {10.1007/s002200050743},
	fjournal = {Communications in Mathematical Physics},
	issn = {0010-3616},
	journal = {Comm. Math. Phys.},
	mrclass = {82B41 (15A52 60F99 82B44)},
	mrnumber = {1727234},
	mrreviewer = {Boris A. Khoruzhenko},
	number = {3},
	pages = {697--733},
	title = {Universality at the edge of the spectrum in {W}igner random matrices},
	url = {https://doi.org/10.1007/s002200050743},
	volume = {207},
	year = {1999}}

@article{MR1747178,
	author = {Fyodorov, Yan V.},
	doi = {10.1088/0305-4470/32/42/314},
	fjournal = {Journal of Physics. A. Mathematical and General},
	issn = {0305-4470,1751-8121},
	journal = {J. Phys. A},
	mrclass = {82B44 (94C05)},
	mrnumber = {1747178},
	number = {42},
	pages = {7429--7446},
	title = {Spectral properties of random reactance networks and random matrix pencils},
	url = {https://doi.org/10.1088/0305-4470/32/42/314},
	volume = {32},
	year = {1999}}

@article{MR2081462,
	author = {Soshnikov, Alexander},
	doi = {10.1214/ECP.v9-1112},
	fjournal = {Electronic Communications in Probability},
	issn = {1083-589X},
	journal = {Electron. Comm. Probab.},
	mrclass = {60E07},
	mrnumber = {2081462},
	mrreviewer = {Mohamed\ Sifi},
	pages = {82--91},
	title = {Poisson statistics for the largest eigenvalues of {W}igner random matrices with heavy tails},
	url = {https://doi.org/10.1214/ECP.v9-1112},
	volume = {9},
	year = {2004}}

@article{MR2125574,
	author = {Soshnikov, Alexander and Fyodorov, Yan V.},
	doi = {10.1063/1.1855932},
	fjournal = {Journal of Mathematical Physics},
	issn = {0022-2488,1089-7658},
	journal = {J. Math. Phys.},
	mrclass = {82B41 (60G55 62E10)},
	mrnumber = {2125574},
	mrreviewer = {Razvan\ Teodorescu},
	number = {3},
	pages = {033302, 15},
	title = {On the largest singular values of random matrices with independent {C}auchy entries},
	url = {https://doi.org/10.1063/1.1855932},
	volume = {46},
	year = {2005}}

@article{MR2206341,
	author = {Bryc, W\l{o}dzimierz and Dembo, Amir and Jiang, Tiefeng},
	doi = {10.1214/009117905000000495},
	fjournal = {The Annals of Probability},
	issn = {0091-1798},
	journal = {Ann. Probab.},
	mrclass = {60F99 (15A52 60F10 62H10)},
	mrnumber = {2206341},
	mrreviewer = {Martin V. Hildebrand},
	number = {1},
	pages = {1--38},
	title = {Spectral measure of large random {H}ankel, {M}arkov and {T}oeplitz matrices},
	url = {https://doi.org/10.1214/009117905000000495},
	volume = {34},
	year = {2006}}

@incollection{MR2234922,
	author = {Soshnikov, Alexander},
	booktitle = {Mathematical physics of quantum mechanics},
	doi = {10.1007/3-540-34273-7\_26},
	isbn = {978-3-540-31026-6; 3-540-31026-6},
	mrclass = {82B41 (15A52 60F05 82B44)},
	mrnumber = {2234922},
	mrreviewer = {Razvan\ Teodorescu},
	pages = {351--364},
	publisher = {Springer, Berlin},
	series = {Lecture Notes in Phys.},
	title = {Poisson statistics for the largest eigenvalues in random matrix ensembles},
	url = {https://doi.org/10.1007/3-540-34273-7_26},
	volume = {690},
	year = {2006}}

@book{MR2248695,
	author = {Chung, Fan and Lu, Linyuan},
	doi = {10.1090/cbms/107},
	isbn = {978-0-8218-3657-6; 0-8218-3657-9},
	mrclass = {05C85 (68R10 94C15)},
	mrnumber = {2248695},
	mrreviewer = {Colin D. Cooper},
	pages = {viii+264},
	publisher = {Published for the Conference Board of the Mathematical Sciences, Washington, DC; by the American Mathematical Society, Providence, RI},
	series = {CBMS Regional Conference Series in Mathematics},
	title = {Complex graphs and networks},
	url = {https://doi.org/10.1090/cbms/107},
	volume = {107},
	year = {2006}}

@article{MR2308592,
	author = {El Karoui, Noureddine},
	doi = {10.1214/009117906000000917},
	fjournal = {The Annals of Probability},
	issn = {0091-1798},
	journal = {Ann. Probab.},
	mrclass = {60F05 (47B35 62E20)},
	mrnumber = {2308592},
	mrreviewer = {Ofer Zeitouni},
	number = {2},
	pages = {663--714},
	title = {Tracy-{W}idom limit for the largest eigenvalue of a large class of complex sample covariance matrices},
	url = {https://doi.org/10.1214/009117906000000917},
	volume = {35},
	year = {2007}}

@article{MR2398763,
	author = {Onatski, Alexei},
	doi = {10.1214/07-AAP454},
	fjournal = {The Annals of Applied Probability},
	issn = {1050-5164},
	journal = {Ann. Appl. Probab.},
	mrclass = {60F05 (62E20)},
	mrnumber = {2398763},
	mrreviewer = {Alexander Roitershtein},
	number = {2},
	pages = {470--490},
	title = {The {T}racy-{W}idom limit for the largest eigenvalues of singular complex {W}ishart matrices},
	url = {https://doi.org/10.1214/07-AAP454},
	volume = {18},
	year = {2008}}

@article{MR2485010,
	author = {Johnstone, Iain M.},
	doi = {10.1214/08-AOS605},
	fjournal = {The Annals of Statistics},
	issn = {0090-5364},
	journal = {Ann. Statist.},
	mrclass = {62H10 (60B20 62E20)},
	mrnumber = {2485010},
	mrreviewer = {Carlos Martins-Filho},
	number = {6},
	pages = {2638--2716},
	title = {Multivariate analysis and {J}acobi ensembles: largest eigenvalue, {T}racy-{W}idom limits and rates of convergence},
	url = {https://doi.org/10.1214/08-AOS605},
	volume = {36},
	year = {2008}}

@article{MR2545551,
	author = {Sodin, Sasha},
	doi = {10.1007/s10955-009-9813-2},
	fjournal = {Journal of Statistical Physics},
	issn = {0022-4715},
	journal = {J. Stat. Phys.},
	mrclass = {60B20 (82B44)},
	mrnumber = {2545551},
	mrreviewer = {VenKata K. B. Kota},
	number = {5},
	pages = {834--841},
	title = {The {T}racy-{W}idom law for some sparse random matrices},
	url = {https://doi.org/10.1007/s10955-009-9813-2},
	volume = {136},
	year = {2009}}

@article{MR2548495,
	author = {Auffinger, Antonio and Ben Arous, G\'{e}rard and P\'{e}ch\'{e}, Sandrine},
	doi = {10.1214/08-AIHP188},
	fjournal = {Annales de l'Institut Henri Poincar\'{e} Probabilit\'{e}s et Statistiques},
	issn = {0246-0203,1778-7017},
	journal = {Ann. Inst. Henri Poincar\'{e} Probab. Stat.},
	mrclass = {60B20 (15B52 60G52)},
	mrnumber = {2548495},
	mrreviewer = {Walid\ Hachem},
	number = {3},
	pages = {589--610},
	title = {Poisson convergence for the largest eigenvalues of heavy tailed random matrices},
	url = {https://doi.org/10.1214/08-AIHP188},
	volume = {45},
	year = {2009}}

@book{MR2567175,
	author = {Bai, Zhidong and Silverstein, Jack W.},
	doi = {10.1007/978-1-4419-0661-8},
	edition = {Second},
	isbn = {978-1-4419-0660-1},
	mrclass = {60B20 (15B52 62H99 91G70 94A05)},
	mrnumber = {2567175},
	mrreviewer = {Wenbo V. Li},
	pages = {xvi+551},
	publisher = {Springer, New York},
	series = {Springer Series in Statistics},
	title = {Spectral analysis of large dimensional random matrices},
	url = {https://doi.org/10.1007/978-1-4419-0661-8},
	year = {2010}}

@article{MR2669449,
	author = {Tao, Terence and Vu, Van},
	doi = {10.1007/s00220-010-1044-5},
	fjournal = {Communications in Mathematical Physics},
	issn = {0010-3616},
	journal = {Comm. Math. Phys.},
	mrclass = {60B20},
	mrnumber = {2669449},
	mrreviewer = {Nizar Demni},
	number = {2},
	pages = {549--572},
	title = {Random matrices: universality of local eigenvalue statistics up to the edge},
	url = {https://doi.org/10.1007/s00220-010-1044-5},
	volume = {298},
	year = {2010}}

@article{MR2759729,
	author = {Ding, Xue and Jiang, Tiefeng},
	doi = {10.1214/10-AAP677},
	fjournal = {The Annals of Applied Probability},
	issn = {1050-5164},
	journal = {Ann. Appl. Probab.},
	mrclass = {60B20 (05C50 05C80 60B10)},
	mrnumber = {2759729},
	mrreviewer = {Steven Joel Miller},
	number = {6},
	pages = {2086--2117},
	title = {Spectral distributions of adjacency and {L}aplacian matrices of random graphs},
	url = {https://doi.org/10.1214/10-AAP677},
	volume = {20},
	year = {2010}}

@article{MR2784665,
	author = {Tao, Terence and Vu, Van},
	doi = {10.1007/s11511-011-0061-3},
	fjournal = {Acta Mathematica},
	issn = {0001-5962},
	journal = {Acta Math.},
	mrclass = {60B20},
	mrnumber = {2784665},
	mrreviewer = {Djalil Chafa\"{\i}},
	number = {1},
	pages = {127--204},
	title = {Random matrices: universality of local eigenvalue statistics},
	url = {https://doi.org/10.1007/s11511-011-0061-3},
	volume = {206},
	year = {2011}}

@book{MR2906465,
	author = {Tao, Terence},
	doi = {10.1090/gsm/132},
	isbn = {978-0-8218-7430-1},
	mrclass = {60B20 (15B52)},
	mrnumber = {2906465},
	mrreviewer = {Steven Joel Miller},
	pages = {x+282},
	publisher = {American Mathematical Society, Providence, RI},
	series = {Graduate Studies in Mathematics},
	title = {Topics in random matrix theory},
	url = {https://doi.org/10.1090/gsm/132},
	volume = {132},
	year = {2012}}

@article{MR2948689,
	author = {Jiang, Tiefeng},
	doi = {10.1007/s00440-011-0357-4},
	fjournal = {Probability Theory and Related Fields},
	issn = {0178-8051},
	journal = {Probab. Theory Related Fields},
	mrclass = {05C80 (05C50 60B20 60F15 62G30)},
	mrnumber = {2948689},
	mrreviewer = {Jonathan Cutler},
	number = {3-4},
	pages = {671--690},
	title = {Low eigenvalues of {L}aplacian matrices of large random graphs},
	url = {https://doi.org/10.1007/s00440-011-0357-4},
	volume = {153},
	year = {2012}}

@article{MR2967963,
	author = {Jiang, Tiefeng},
	doi = {10.1142/S2010326312500049},
	fjournal = {Random Matrices. Theory and Applications},
	issn = {2010-3263},
	journal = {Random Matrices Theory Appl.},
	mrclass = {05C80 (05C50 60B10 60B20)},
	mrnumber = {2967963},
	number = {3},
	pages = {1250004, 20},
	title = {Empirical distributions of {L}aplacian matrices of large dilute random graphs},
	url = {https://doi.org/10.1142/S2010326312500049},
	volume = {1},
	year = {2012}}

@article{MR2988403,
	author = {Bao, Zhigang and Pan, Guangming and Zhou, Wang},
	doi = {10.1214/EJP.v17-1962},
	fjournal = {Electronic Journal of Probability},
	journal = {Electron. J. Probab.},
	mrclass = {62H10 (15B52 60B20)},
	mrnumber = {2988403},
	mrreviewer = {Antonio Auffinger},
	pages = {no. 88, 32},
	title = {Tracy-{W}idom law for the extreme eigenvalues of sample correlation matrices},
	url = {https://doi.org/10.1214/EJP.v17-1962},
	volume = {17},
	year = {2012}}

@article{MR3068390,
	author = {Erd\H{o}s, L\'{a}szl\'{o} and Knowles, Antti and Yau, Horng-Tzer and Yin, Jun},
	doi = {10.1214/EJP.v18-2473},
	fjournal = {Electronic Journal of Probability},
	journal = {Electron. J. Probab.},
	mrclass = {60B20},
	mrnumber = {3068390},
	pages = {no. 59, 58},
	title = {The local semicircle law for a general class of random matrices},
	url = {https://doi.org/10.1214/EJP.v18-2473},
	volume = {18},
	year = {2013}}

@article{MR3161313,
	author = {Lee, Ji Oon and Yin, Jun},
	doi = {10.1215/00127094-2414767},
	fjournal = {Duke Mathematical Journal},
	issn = {0012-7094},
	journal = {Duke Math. J.},
	mrclass = {60B20},
	mrnumber = {3161313},
	mrreviewer = {Florent Benaych-Georges},
	number = {1},
	pages = {117--173},
	title = {A necessary and sufficient condition for edge universality of {W}igner matrices},
	url = {https://doi.org/10.1215/00127094-2414767},
	volume = {163},
	year = {2014}}

@article{MR3321497,
	author = {Ding, Xue},
	doi = {10.1016/j.spl.2015.01.005},
	fjournal = {Statistics \& Probability Letters},
	issn = {0167-7152},
	journal = {Statist. Probab. Lett.},
	mrclass = {15B52 (60B20 60F05 60F15)},
	mrnumber = {3321497},
	mrreviewer = {Bin Ye},
	pages = {61--69},
	title = {On some spectral properties of large block {L}aplacian random matrices},
	url = {https://doi.org/10.1016/j.spl.2015.01.005},
	volume = {99},
	year = {2015}}

@article{MR3349181,
	author = {Abbe, Emmanuel and Bandeira, Afonso S. and Bracher, Annina and Singer, Amit},
	doi = {10.1109/TNSE.2014.2368716},
	fjournal = {IEEE Transactions on Network Science and Engineering},
	journal = {IEEE Trans. Network Sci. Eng.},
	mrclass = {94C15},
	mrnumber = {3349181},
	number = {1},
	pages = {10--22},
	title = {Decoding binary node labels from censored edge measurements: phase transition and efficient recovery},
	url = {https://doi.org/10.1109/TNSE.2014.2368716},
	volume = {1},
	year = {2014}}

@article{MR3582818,
	author = {Lee, Ji Oon and Schnelli, Kevin},
	doi = {10.1214/16-AAP1193},
	fjournal = {The Annals of Applied Probability},
	issn = {1050-5164},
	journal = {Ann. Appl. Probab.},
	mrclass = {60B20 (15B52 62H10)},
	mrnumber = {3582818},
	mrreviewer = {Tatyana Shcherbina},
	number = {6},
	pages = {3786--3839},
	title = {Tracy-{W}idom distribution for the largest eigenvalue of real sample covariance matrices with general population},
	url = {https://doi.org/10.1214/16-AAP1193},
	volume = {26},
	year = {2016}}

@book{MR3699468,
	author = {Erd\H{o}s, L\'{a}szl\'{o} and Yau, Horng-Tzer},
	isbn = {978-1-4704-3648-3},
	mrclass = {60B20 (15B52 60F10 82B44)},
	mrnumber = {3699468},
	mrreviewer = {Sasha Sodin},
	pages = {ix+226},
	publisher = {Courant Institute of Mathematical Sciences, New York; American Mathematical Society, Providence, RI},
	series = {Courant Lecture Notes in Mathematics},
	title = {A dynamical approach to random matrix theory},
	volume = {28},
	year = {2017}}

@article{MR3777782,
	author = {Bandeira, Afonso S.},
	doi = {10.1007/s10208-016-9341-9},
	fjournal = {Foundations of Computational Mathematics. The Journal of the Society for the Foundations of Computational Mathematics},
	issn = {1615-3375},
	journal = {Found. Comput. Math.},
	mrclass = {15B52 (05C80 05C85 90C22)},
	mrnumber = {3777782},
	mrreviewer = {Sho Matsumoto},
	number = {2},
	pages = {345--379},
	title = {Random {L}aplacian matrices and convex relaxations},
	url = {https://doi.org/10.1007/s10208-016-9341-9},
	volume = {18},
	year = {2018}}

@article{MR3800840,
	author = {Lee, Ji Oon and Schnelli, Kevin},
	doi = {10.1007/s00440-017-0787-8},
	fjournal = {Probability Theory and Related Fields},
	issn = {0178-8051},
	journal = {Probab. Theory Related Fields},
	mrclass = {60B20 (62H10)},
	mrnumber = {3800840},
	number = {1-2},
	pages = {543--616},
	title = {Local law and {T}racy-{W}idom limit for sparse random matrices},
	url = {https://doi.org/10.1007/s00440-017-0787-8},
	volume = {171},
	year = {2018}}

@article{MR3916110,
	author = {Aggarwal, Amol},
	doi = {10.1007/s00440-018-0836-y},
	fjournal = {Probability Theory and Related Fields},
	issn = {0178-8051},
	journal = {Probab. Theory Related Fields},
	mrclass = {15B52 (60B20)},
	mrnumber = {3916110},
	mrreviewer = {Pavel T. Stoynov},
	number = {1-2},
	pages = {375--432},
	title = {Bulk universality for generalized {W}igner matrices with few moments},
	url = {https://doi.org/10.1007/s00440-018-0836-y},
	volume = {173},
	year = {2019}}

@article{MR4019881,
	author = {Hwang, Jong Yun and Lee, Ji Oon and Schnelli, Kevin},
	doi = {10.1214/19-AAP1472},
	fjournal = {The Annals of Applied Probability},
	issn = {1050-5164},
	journal = {Ann. Appl. Probab.},
	mrclass = {60B20 (62H10)},
	mrnumber = {4019881},
	mrreviewer = {Stepan Mazur},
	number = {5},
	pages = {3006--3036},
	title = {Local law and {T}racy-{W}idom limit for sparse sample covariance matrices},
	url = {https://doi.org/10.1214/19-AAP1472},
	volume = {29},
	year = {2019}}

@article{MR4058984,
	author = {Huang, Jiaoyang and Landon, Benjamin},
	doi = {10.1214/19-AIHP957},
	fjournal = {Annales de l'Institut Henri Poincar\'{e} Probabilit\'{e}s et Statistiques},
	issn = {0246-0203},
	journal = {Ann. Inst. Henri Poincar\'{e} Probab. Stat.},
	mrclass = {15B52 (05C50 60B20)},
	mrnumber = {4058984},
	mrreviewer = {Thomas Kriecherbauer},
	number = {1},
	pages = {120--154},
	title = {Spectral statistics of sparse {E}rd{\H{o}}s-{R}\'{e}nyi graph {L}aplacians},
	url = {https://doi.org/10.1214/19-AIHP957},
	volume = {56},
	year = {2020}}

@article{MR4091114,
	author = {Hwang, Jong Yun and Lee, Ji Oon and Yang, Wooseok},
	doi = {10.3150/20-BEJ1201},
	fjournal = {Bernoulli. Official Journal of the Bernoulli Society for Mathematical Statistics and Probability},
	issn = {1350-7265},
	journal = {Bernoulli},
	mrclass = {60B20 (05C80)},
	mrnumber = {4091114},
	mrreviewer = {Fernando Lucas Metz},
	number = {3},
	pages = {2400--2435},
	title = {Local law and {T}racy-{W}idom limit for sparse stochastic block models},
	url = {https://doi.org/10.3150/20-BEJ1201},
	volume = {26},
	year = {2020}}

@article{MR4193182,
	author = {Chakrabarty, Arijit and Hazra, Rajat Subhra and den Hollander, Frank and Sfragara, Matteo},
	doi = {10.1142/S201032632150009X},
	fjournal = {Random Matrices. Theory and Applications},
	issn = {2010-3263},
	journal = {Random Matrices Theory Appl.},
	mrclass = {60B20 (05C80 46L54)},
	mrnumber = {4193182},
	number = {1},
	pages = {Paper No. 2150009, 34},
	title = {Spectra of adjacency and {L}aplacian matrices of inhomogeneous {E}rd{\H{o}}s-{R}\'{e}nyi random graphs},
	url = {https://doi.org/10.1142/S201032632150009X},
	volume = {10},
	year = {2021}}

@article{MR4260217,
	author = {Tian, Boping and Zhang, Yangchun and Zhou, Wang},
	doi = {10.1142/S2010326321500222},
	fjournal = {Random Matrices. Theory and Applications},
	issn = {2010-3263},
	journal = {Random Matrices Theory Appl.},
	mrclass = {62M10 (60B20)},
	mrnumber = {4260217},
	number = {2},
	pages = {Paper No. 2150022, 22},
	title = {Tracy-{W}idom law for the largest eigenvalue of sample covariance matrix generated by {VARMA}},
	url = {https://doi.org/10.1142/S2010326321500222},
	volume = {10},
	year = {2021}}

@article{MR4440252,
	author = {Chatterjee, Anirban and Hazra, Rajat Subhra},
	doi = {10.1142/S2010326322500265},
	fjournal = {Random Matrices. Theory and Applications},
	issn = {2010-3263},
	journal = {Random Matrices Theory Appl.},
	mrclass = {60B20 (05C80 46L54)},
	mrnumber = {4440252},
	number = {3},
	pages = {Paper No. 2250026, 66},
	title = {Spectral properties for the {L}aplacian of a generalized {W}igner matrix},
	url = {https://doi.org/10.1142/S2010326322500265},
	volume = {11},
	year = {2022}}

@article{MR4474568,
	author = {Jun, Wen and Jiahui, Xie and Long, Yu and Wang, Zhou},
	doi = {10.3150/21-bej1443},
	fjournal = {Bernoulli. Official Journal of the Bernoulli Society for Mathematical Statistics and Probability},
	issn = {1350-7265},
	journal = {Bernoulli},
	mrclass = {60B20 (15A18 15B52 62H10)},
	mrnumber = {4474568},
	number = {4},
	pages = {2941--2967},
	title = {Tracy-{W}idom limit for the largest eigenvalue of high-dimensional covariance matrices in elliptical distributions},
	url = {https://doi.org/10.3150/21-bej1443},
	volume = {28},
	year = {2022}}

@article{PhysRevE.67.047101,
	author = {St\"aring, J. and Mehlig, B. and Fyodorov, Yan V. and Luck, J. M.},
	doi = {10.1103/PhysRevE.67.047101},
	issue = {4},
	journal = {Phys. Rev. E},
	month = {Apr},
	numpages = {4},
	pages = {047101},
	publisher = {American Physical Society},
	title = {Random symmetric matrices with a constraint: The spectral density of random impedance networks},
	url = {https://link.aps.org/doi/10.1103/PhysRevE.67.047101},
	volume = {67},
	year = {2003}}

@article{campbell2022extreme,
	author = {Campbell, Andrew and Luh, Kyle and O'Rourke, Sean},
	doi = {10.1214/25-ejp1366},
	fjournal = {Electronic Journal of Probability},
	issn = {1083-6489},
	journal = {Electron. J. Probab.},
	mrclass = {60B20 (05C50)},
	mrnumber = {4926015},
	mrreviewer = {Yan\ V.\ Fyodorov},
	note = {With an appendix by Santiago Arenas-Velilla and Victor Perez-Abreu},
	pages = {Paper No. 104, 52},
	title = {Extreme eigenvalues of {L}aplacian random matrices with {G}aussian entries},
	url = {https://doi.org/10.1214/25-ejp1366},
	volume = {30},
	year = {2025}}

@article{MR3210147,
	author = {Benaych-Georges, Florent and Guionnet, Alice and Male, Camille},
	doi = {10.1007/s00220-014-1975-3},
	fjournal = {Communications in Mathematical Physics},
	issn = {0010-3616,1432-0916},
	journal = {Comm. Math. Phys.},
	mrclass = {60B20 (15B52 60F05)},
	mrnumber = {3210147},
	mrreviewer = {Jiang\ Hu},
	number = {2},
	pages = {641--686},
	title = {Central limit theorems for linear statistics of heavy tailed random matrices},
	url = {https://doi.org/10.1007/s00220-014-1975-3},
	volume = {329},
	year = {2014}}

@book{Vershynin2018,
	author = {Vershynin, Roman},
	doi = {10.1017/9781108231596},
	isbn = {978-1-108-41519-4},
	mrclass = {60-01 (60B05 60B20 60E15 60Fxx 62H25)},
	mrnumber = {3837109},
	mrreviewer = {Sasha\ Sodin},
	note = {An introduction with applications in data science, With a foreword by Sara van de Geer},
	pages = {xiv+284},
	publisher = {Cambridge University Press, Cambridge},
	series = {Cambridge Series in Statistical and Probabilistic Mathematics},
	title = {High-dimensional probability},
	url = {https://doi.org/10.1017/9781108231596},
	volume = {47},
	year = {2018}}

@book{Bollobas2001,
	author = {Bollob\'as, B\'ela},
	doi = {10.1017/CBO9780511814068},
	edition = {Second},
	isbn = {0-521-80920-7; 0-521-79722-5},
	mrclass = {05C80 (60C05)},
	mrnumber = {1864966},
	pages = {xviii+498},
	publisher = {Cambridge University Press, Cambridge},
	series = {Cambridge Studies in Advanced Mathematics},
	title = {Random graphs},
	url = {https://doi.org/10.1017/CBO9780511814068},
	volume = {73},
	year = {2001}}

@article{petrov1972independent,
	author = {Petrov, VV},
	journal = {Yu. V. Prokhorov. V. StatuleviCius (Eds.)},
	publisher = {Springer},
	title = {Of independent random variables},
	year = {1972}}

@article {MR4550215,
	AUTHOR = {Erd\H{o}s, L\'aszl\'o{} and Xu, Yuanyuan},
	TITLE = {Small deviation estimates for the largest eigenvalue of
	{W}igner matrices},
	JOURNAL = {Bernoulli},
	FJOURNAL = {Bernoulli. Official Journal of the Bernoulli Society for
	Mathematical Statistics and Probability},
	VOLUME = {29},
	YEAR = {2023},
	NUMBER = {2},
	PAGES = {1063--1079},
	ISSN = {1350-7265,1573-9759},
	MRCLASS = {60B20 (60F10)},
	MRNUMBER = {4550215},
	DOI = {10.3150/22-bej1490},
	URL = {https://doi.org/10.3150/22-bej1490},
	}
	\bibliographystyle{abbrv}

	\end{document}